\newcommand{\Caldararu}{C\u ald\u araru}
\newcommand{\field}[1]{\mathbf #1}
\newcommand{\mf}[1]{\mathfrak #1}
\newcommand{\mc}[1]{\mathcal #1}
\newcommand{\ms}[1]{\mathscr #1}
\newcommand{\widebar}[1]{\overline{#1}}
\newcommand{\R}{\field R}
\renewcommand{\L}{\field L}
\newcommand{\Z}{\field Z}
\newcommand{\Q}{\field Q}
\newcommand{\simto}{\stackrel{\sim}{\to}}
\newcommand{\eps}{\varepsilon}
\renewcommand{\phi}{\varphi}
\renewcommand{\hom}{\operatorname{Hom}}
\newcommand{\shom}{\ms H\!om}
\DeclareMathOperator{\chom}{\underline{Hom}}
\DeclareMathOperator{\uhom}{\underline{Hom}}
\newcommand{\rshom}{\mathbf{R}\shom}
\newcommand{\saut}{\ms A\!ut}
\DeclareMathOperator{\rhom}{\operatorname{{\bf R}Hom}}
\newcommand{\send}{\ms E\!nd}
\newcommand{\rsend}{\mathbf{R}\ms E\!nd}
\DeclareMathOperator{\rend}{\operatorname{{\bf R}End}}
\newcommand{\spec}{\operatorname{Spec}}
\DeclareMathOperator{\Pic}{Pic}
\newcommand{\sPic}{\ms Pic}
\newcommand{\triv}{\textrm{triv}}
\newcommand{\thickslash}{\mathbin{\!\!\pmb{\fatslash}}}
\newcommand{\parf}{\text{\rm parf}}
\newcommand{\m}{\boldsymbol{\mu}}
\newcommand{\G}{\field G} 
\newcommand{\Der}{\operatorname{Der}}
\newcommand{\etale}{\operatorname{\acute{e}t}}
\newcommand{\retale}{\operatorname{r\acute{e}t}}
\newcommand{\ETALE}{\operatorname{\acute{E}T}}
\renewcommand{\H}{\operatorname{H}}
\newcommand{\HH}{\operatorname{\bf H}}
\newcommand{\GL}{\operatorname{GL}}
\newcommand{\PGL}{\operatorname{PGL}}
\DeclareMathOperator{\SL}{\operatorname{SL}}
\DeclareMathOperator{\ext}{\operatorname{Ext}}
\newcommand{\sext}{\ms E\!xt}
\DeclareMathOperator{\cl}{\operatorname{cl}}
\newcommand{\LL}{L}
\DeclareMathOperator{\D}{\operatorname{\bf D}}
\DeclareMathOperator*{\tensor}{\otimes}
\DeclareMathOperator*{\ltensor}{\stackrel{\field L}{\otimes}}
\DeclareMathOperator{\rk}{\operatorname{rk}}
\newcommand{\surj}{\twoheadrightarrow}
\newcommand{\inj}{\hookrightarrow}
\newcommand{\id}{\operatorname{id}}
\DeclareMathOperator{\ann}{\operatorname{Ann}}
\newcommand{\xto}{\xrightarrow}
\DeclareMathOperator{\End}{\operatorname{End}}
\DeclareMathOperator{\aut}{\operatorname{Aut}}
\DeclareMathOperator{\isom}{\operatorname{Isom}}
\DeclareMathOperator{\M}{\operatorname{M}}
\newcommand{\lf}{\textrm{lf}} 
\newcommand{\invlim}{\varprojlim}
\newcommand{\PR}{\ms P\ms R}
\newcommand{\GAz}{\mathbf{GAz}}
\newcommand{\Tw}{\mathbf{Tw}}
\DeclareMathOperator{\mTw}{Tw}
\DeclareMathOperator{\B}{\operatorname{\mathsf B\!}}
\newtheorem{lem}{Lemma}[subsubsection]
\renewcommand{\thelem}{\ifnum\value{subsubsection}>0{\thesubsubsection.\arabic{lem}}\else{\ifnum\value{subsection}>0{\thesubsection.\arabic{lem}}\else{\thesection.\arabic{lem}}\fi}\fi}
\newtheorem{thm}[lem]{Theorem}
\newtheorem*{theorem}{Irreducibility Theorem}
\newtheorem{prop}[lem]{Proposition}
\newtheorem{cor}[lem]{Corollary}
\theoremstyle{definition}
\newtheorem{defn}[lem]{Definition}
\newtheorem{warning}[lem]{Warning}
\newtheorem{example}[lem]{Example}
\newtheorem{para}[lem]{}
\newtheorem{notn}[lem]{Notation}
\newtheorem{ques}[lem]{Question}
\theoremstyle{remark}
\newtheorem{remark}[lem]{Remark}
\numberwithin{equation}{lem}
\author{Max Lieblich}
\address{Fine Hall, Washington Road, Princeton NJ 08544-1000}
\email{lieblich@math.princeton.edu}
\title{Compactified moduli of projective bundles}
\thanks{The work described in this paper was
  partially supported at various stages by an NSF Graduate Fellowship,
  a Clay Liftoff Fellowship, an NSF Postdoctoral Fellowship, and NSF
  grant DMS-0758391}
\date{}
\begin{document}

\bibliographystyle{plain}

\maketitle

\begin{abstract}
  We present a method for compactifying stacks of $\PGL_n$-torsors
  (Azumaya algebras) on algebraic spaces.  In particular, when the
  ambient space is a smooth projective surface we use our methods to
  show that various moduli spaces are irreducible and carry natural
  virtual fundamental classes.  We also prove a version of the
  Skolem-Noether theorem for certain algebra objects in the derived
  category, which allows us to give an explicit description of the
  boundary points in our compactified moduli problem.
\end{abstract}

\tableofcontents

\section{Introduction}

In this paper, we present a method for constructing compactified
moduli of principal $\PGL_{n}$-bundles on an algebraic space.  As a
demonstration of its usefulness, we will prove the following theorem.

\begin{theorem}[Theorem \ref{T:main}]
  Let $X$ be a smooth projective surface over an algebraically closed
  field $k$ and $n$ a positive integer which is invertible in $k$.
  For any cohomology class $\alpha\in\H^2(X,\m_n)$, the
  stack of stable $\PGL_n$-torsors on $X$ with cohomology class
  $\alpha$ and sufficiently large $c_2$ is of finite type and
  irreducible whenever it is non-empty, and it is non-empty infinitely
  often.
\end{theorem}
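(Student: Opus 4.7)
The plan is to leverage the compactification of $\PGL_n$-torsors by generalized Azumaya algebras (the main construction of the paper) to reduce the statement to a known irreducibility result for moduli of stable twisted sheaves, much as O'Grady's theorem on irreducibility of moduli of stable sheaves on surfaces (for large $c_2$) is proved in the untwisted setting.

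First, I would invoke the compactified stack $\mathbf{GAz}$ of generalized Azumaya algebras of rank $n^2$ with cohomology class $\alpha$. The open substack parametrizing honest (locally free) Azumaya algebras is precisely the stack of $\PGL_n$-torsors of class $\alpha$, and the paper's Skolem-Noether theorem gives an explicit description of the boundary: every generalized Azumaya algebra arises, locally and up to twist by a line bundle, as $\rsend(\ms F)$ for an $\alpha$-twisted coherent sheaf $\ms F$ of rank $n$ on $X$. This identifies the compactified moduli of $\PGL_n$-torsors with a quotient of the moduli stack $\Tw^{\alpha,\mathrm{s}}_{n,c_2}$ of stable $\alpha$-twisted torsion-free sheaves of rank $n$ (with appropriate determinant) by the action of $\Pic(X)$ twisting $\ms F \mapsto \ms F \otimes L$, together with the passage from $\ms F$ to $\rsend(\ms F)$. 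The relation between the second Chern class of the torsor and the second Chern class of a twisted lift is linear and explicit, so fixing $c_2$ of the torsor amounts to constraining the twisted sheaf's discriminant.

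Second, I would prove the irreducibility and non-emptiness of the moduli of stable $\alpha$-twisted sheaves for large discriminant. This is a twisted analog of the O'Grady--Gieseker--Langer theorem and can be established by the standard strategy: bound the singularities of the moduli space via Langer's $e$-stability methods, show the boundary of torsion-free-but-not-locally-free twisted sheaves is of strictly smaller dimension than the expected dimension, and produce a stratification connecting all components of stable locally free twisted sheaves to a single component by elementary modifications. Non-emptiness for infinitely many $c_2$ is immediate from the observation that one may add torsion sheaves at points (or pass through elementary transformations) to increase $c_2$ while staying in the same moduli problem, starting from any single $\alpha$-twisted sheaf (whose existence for some discriminant is guaranteed since $\alpha$ is realized by a sheaf over a gerbe which, after twisting, is a perfectly good coherent sheaf on $X$).

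Third, I would transfer irreducibility from the moduli of twisted sheaves to the compactified moduli of generalized Azumaya algebras. The morphism $\ms F \mapsto \rsend(\ms F)$ has connected fibers (the $\Pic(X) \times B\G_m$-action), so irreducibility descends. Then, I would argue that the locus of honest (locally free) Azumaya algebras is open and dense in $\mathbf{GAz}$, provided $c_2$ is large enough: the complement corresponds to $\ms F$ that are not locally free, and the dimension count (in the twisted analog) shows that for large $c_2$ this locus has strictly smaller dimension than the moduli itself. Consequently, the stack of stable $\PGL_n$-torsors of class $\alpha$ and $c_2 \gg 0$ is open and dense in an irreducible finite-type stack, hence itself irreducible (when non-empty) and of finite type, and it inherits the infinite non-emptiness from the twisted sheaf moduli.

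The main obstacle will be the second step, namely establishing the twisted analog of O'Grady's irreducibility theorem with sufficient uniformity to handle all $\alpha$: one must control the geometry of $\Tw^{\alpha,\mathrm{s}}$ along its non-locally-free locus and along possible boundary strata that arise from the particular twist, and one must verify that the comparison between $c_2$ of the torsor and the discriminant of the lift interacts well with the bounds, so that ``sufficiently large $c_2$'' on the torsor side indeed forces the discriminant of $\ms F$ into the range where irreducibility of $\Tw^{\alpha,\mathrm{s}}$ holds.
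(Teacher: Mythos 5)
Your overall architecture is the paper's: cover the compactified moduli stack $\ms M^{\ms X}_n(c)^s\cong\GAz_{\ms X/k}(n)^s$ by the stack $\Tw^s_{\ms X/k}(n,\ms O,c/2n)$ of stable twisted sheaves with trivialized determinant (the fibers being, locally, $\m_n$-gerbes over $\Pic_{X/k}[n]$-torsors, so surjectivity is all you need to descend irreducibility), and then quote the asymptotic structure theorem for twisted-sheaf moduli --- irreducibility and density of the locally free locus for large discriminant. The paper cites that structure theorem from the author's earlier work rather than reproving it, and you correctly identify reproving it as the main labor in your step two; that part of your plan is sound in outline.

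The genuine gap is in your non-emptiness argument. You assert that non-emptiness for infinitely many $c_2$ is ``immediate'' once one has a single $\alpha$-twisted sheaf, and that such a sheaf exists ``since $\alpha$ is realized by a sheaf over a gerbe.'' That only produces a twisted sheaf of \emph{some} rank; what the theorem needs is a \emph{stable locally free} $\ms X$-twisted sheaf of rank exactly $n$. On a surface a torsion-free twisted sheaf of generic rank $n$ gives an $n$-dimensional representation of the division algebra attached to $\alpha$ at the generic point, so its existence already forces $\operatorname{ind}(\alpha)\mid n$. Since a priori one only knows $\per(\alpha)\mid n$, this is exactly the period--index theorem for surfaces, i.e.\ de Jong's theorem, which the paper isolates as Lemma \ref{L:dejong} and which is the deep external input here; the further refinement to a \emph{stable} locally free twisted sheaf is also nontrivial and is taken from \cite{period-index-paper} and \cite{more-moduli}. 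Once that seed object exists, your mechanism for propagating non-emptiness is essentially the paper's Lemma \ref{L:induct} (pass to subsheaves with length-one punctual quotient, which raises $\deg c_2$ by one and preserves stability and the trivialized determinant --- note that literally ``adding torsion at points,'' as you first phrase it, would destroy torsion-freeness; it is the elementary-modification version that works). So the skeleton is right, but as written the non-emptiness claim rests on an unjustified and genuinely hard assertion.
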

\noindent Here the number $c_2$ is meant to be the second Chern class
of the adjoint vector bundle associated to a $\PGL_n$-torsor.  For the
definition of stability of a $\PGL_n$-torsor, we refer the reader to
Definition \ref{D:stab} below; in characteristic $0$ it is equivalent
to slope-stability of the adjoint vector bundle, while in arbitrary
characteristic one quantifies only over ideals in the adjoint (with
respect to 
its natural Azumaya algebra structure).

The Irreducibility Theorem may be viewed as a $0$th order algebraic
version of results of Mrowka and Taubes (see e.g.\ \cite{taubes}) on
the stable topology of the space of $\PGL_n$-bundles; we show that
$\pi_0$ is a singleton.  Our proof arises out of a reduction of the moduli
problem to another recently studied problem: moduli of twisted
sheaves.  Before making a few historical remarks, let us outline the
contents of the paper.

In Section \ref{sec:twist-objects-rigid} we present a general theory
of twisted objects in a stack, including the resulting deformation
theory and the relationship between twisted and untwisted virtual
fundamental classes.  In Section \ref{sec:abstract-compac} we apply
this theory to $\PGL_n$-torsors to show that the stack of twisted
sheaves is naturally a cover of a compactification of the stack of
$\PGL_n$-torsors.  In Section \ref{S:genaz}, we give a
reinterpretation of the results of Section \ref{sec:abstract-compac}
using certain algebra objects of the derived category (generalized
Azumaya algebras), with the ultimate aim being an approach to virtual
fundamental classes for spaces of stable $\PGL_n$-torsors.  The key
result there is Theorem \ref{T:the one}, a version of the
Skolem-Noether theorem for these algebra objects, which we believe
should be of independent interest.

In Section \ref{S:surfaces}, we specialize the whole picture to study
moduli of $\PGL_n$-torsors on smooth projective surfaces.  We develop
the theory of stability in Section \ref{S:stability} and use the known
structure theory of moduli spaces of twisted sheaves on a surface to
prove the Irreducibility Theorem in Section \ref{S:conseq}.  In Sections
\ref{S:p=s on surface} and \ref{S:dtvfc}, we use the interpretation of
the moduli problem in terms of generalized Azumaya algebras to produce
virtual fundamental classes on moduli spaces of stable
$\PGL_n$-torsors on surfaces.  In Section \ref{sec:potent-appl-new},
we record a question due to de Jong regarding potentially new
numerical invariants for division algebras over function fields of
surfaces arising out the virtual fundamental classes constructed in
Section \ref{S:dtvfc}.

\subsection*{Historical remarks}

As has become clear in the history of algebraic geometry, a propitious
choice of compactification of a moduli problem can lead to concrete
results about the original (usually open) subproblem which is being
compactified.  Thus, Deligne and Mumford proved that $\ms M_g$ is
irreducible by embedding it as an open substack of $\widebar{\ms M_g}$
and connecting points by first degenerating them to the boundary.
Similarly, O'Grady approached the moduli of semistable vector bundles
on a surface by considering the larger space of semistable torsion
free sheaves and showing that the boundary admits a stratification by
spaces fibered over moduli of stable vector bundles with smaller
$c_2$.  Combining this inductive structure with delicate numerical
estimates allowed him to prove that the spaces of semistable vector
bundles with sufficiently large $c_2$ are irreducible. (This is very
beautifully explained in Chapter 9 of \cite{h-l}.)  The similarity
between the Irreducibility Theorem above and O'Grady's results for stable
sheaves is traceable to the fact that our compactification is
closely related to the space of twisted sheaves, so that we get a similar inductive structure
on the moduli problem from the geometry of its boundary.

Various attempts have been made at constructing compactified moduli
spaces of $G$-torsors for arbitrary (reductive) groups $G$ (the reader
can consult \cite{gomez-sols2}, \cite{hyeon}, \cite{langer-moduli},
\cite{schmitt2}, and \cite{schmitt} for a sampling of moduli problems
and techniques).  Of course, when $G=\GL_{n}$, one can take torsion
free sheaves, and when $G=\SL_{n}$, one can take torsion free sheaves
with a trivialized determinant.  In the existing literature, most
compactifications proceed (at least in the case where the center of
$G$ is trivial) essentially by encoding a degeneration of a principal
$G$-bundle in a degeneration of its adjoint bundle 
to a torsion-free sheaf along with data which remember the principal
$G$-bundle structure over the open subspace on which the degenerate
sheaf is locally free.

We show how one can analyze the case $G=\PGL_{n}$ using more subtle
methods, which roughly amount to allowing a principal bundle to
degnerate by degenerating its associated adjoint bundle to an object
of the derived category (rather than simply a torsion free sheaf).  By
controlling the nature of these derived objects, we arrive at a
compact moduli stack whose geometry is as tightly controlled as that
of the stack of $\SL_n$-bundles.  This ``tight control'' is formalized
precisely by the covering using twisted sheaves.

\subsection*{Acknowledgments}

The results of this paper are an extension of part of the author's PhD
thesis.  He would like to thank his adviser, A.\ J.\ de Jong, for many
helpful conversations.  The author also greatly benefitted from
contact with Dan Abramovich, Jean-Louis Colliot-Th\'el\`ene, Tom\'as G\'omez, Daniel Huybrechts,
Martin Olsson, and K\=ota Yoshioka during various stages of this
project. He thanks the referee for useful comments.

\section{Notation}

All stacks will be stacks in groupoids.  Thus, given an algebraic
structure (e.g., torsion free sheaf), the stack of objects with that
structure will be assumed to keep track only of isomorphisms.

Given a geometric morphism $f:X\to S$ of topoi and a stack $\ms S$ on $X$,
$f_{\ast}\ms S$ will denote the stack on $X$ whose sections over an
object $T\in S$ are the sections of $\ms S$ over $\pi^{-1}T\in X$.
We will often write $T\to S$ for the map to the final object of $S$ and we
will often use $X\times_S T$ to denote the object $f^{-1}(T)$.  Most
of the topoi we encounter will be the usual \'etale or fppf topos of
an algebraic space or stack, but we do include a few which are
slightly less conventional (e.g., the relative small \'etale topos of
Section \ref{sec:relative-flat-etale}).

A stack over an algebraic space will be called \emph{quasi-proper\/}
if it satisfies the existence part of the valuative criterion of
properness over discrete valuation rings (allowing finite extensions,
as is usually required for algebraic stacks).  Given a Deligne-Mumford
stack $\ms M$ with a coarse moduli space, we will let $\ms
M^{\text{\rm mod}}$ denote the coarse space.  
Given a moduli space (stack) $M$ of sheaves on a proper algebraic
space $X$, we will let $M^{\lf}$ denote the open subspace
parametrizing locally free sheaves.

The notation $(\ms O,\mf m,\kappa)$ will mean that $\ms O$ is a local
ring with maximal ideal $\mf m$ and residue field $\kappa$.

\section{Generalities}\label{S:generalities}

Throughout this section, we fix a geometric morphism of ringed topoi $f:X\to
S$.  (In various subsections, there will be additional hypotheses on the
nature of $X$, $S$, or $f$, but the notation will remain unchanged.)

\subsection{Stacks of sheaves}
\label{sec:stacks-sheaves}

There are various types of sheaves which will be important for us.  We
recall important definitions and set notations in this section.  Let
$Z$ be an algebraic space and $\ms F$ a quasi-coherent sheaf of finite
presentation on $Z$.

The sheaf $\ms F$ is 
\begin{enumerate}
\item \emph{perfect\/} if its image in $\D(\ms
  O_Z)$ is a perfect complex;
\item \emph{pure\/} if for every geometric point
  $z\to Z$ the stalk $\ms F_z$ (which is a module over the local ring $\ms
  O_{z,Z}^{\text{\rm hs}}$) has no embedded primes;
\item \emph{totally supported\/} if the natural
  map $\ms O_Z\to\send(\ms F)$ is injective;
\item \emph{totally pure\/} if it is pure and totally supported.
\end{enumerate}

The key property of perfect sheaves for us will be the fact that one
can form the determinant of any such sheaf.  The reader is referred to
\cite{mumford-knudsen} for the construction and basic facts.

It is clear that all of these properties are local in the \'etale
topology on $Z$ (in the sense that they hold on $Z$ if and only if
they hold on an \'etale cover).  Thus, we can define various stacks on
the small \'etale site of $Z$.  We will write

\begin{enumerate}
\item $\ms T_Z$ for the stack of totally supported sheaves;
\item $\ms T_Z^{\parf}$ for the stack of perfect totally
  supported sheaves;
\item $\ms P_Z$ for the stack of pure sheaves;
\item $\ms P_Z^{\parf}$ for the stack of perfect pure sheaves;
\item if $\ms M$ denotes any of the preceding stacks, we will use $\ms
  M(n)$ to denote the substack parametrizing sheaves with rank $n$ at
  each maximal point of $Z$.
\end{enumerate}
In particular, if $n>0$ then $\ms P^{\parf}_Z(n)$ parametrizes perfect
totally pure sheaves.

\subsection{The relative small \'etale site}
\label{sec:relative-flat-etale}

We recall a few pieces of pure nonsense that will help us apply the
techniques of Section \ref{sec:twist-objects-rigid} below to study moduli
problems.  In this section, we assume that $f$ is a morphism of
algebraic spaces.

\begin{defn}
  The \emph{relative
    small \'etale site of $X/S$\/} is the site whose underlying
  category consists of pairs $(U,T)$ with $T\to S$ a morphism and
  $U\to X\times_S T$ an \'etale morphism.  A morphism $(U,T)\to
  (U',T')$ is an $S$-morphism $T\to T'$ and an $T$-morphism $U\to
  U'\times_T' T$.  A covering is a collection of maps
  $\{(V_i,T)\}\to\{(U,T)\}$ such that $V_i\to U$ form a covering.
\end{defn}
We will denote the topos of sheaves on the relative small \'etale site
by $X_{\retale}$.  There is an obvious geometric morphism of topoi
$X_{\retale}\to T_{\ETALE}$.  (In fact, $X_{\retale}$
is just the ``total space'' of a fibered topos over $T_{\ETALE}$ whose
fiber over $T\to S$ is just the small \'etale topos of $X\times_S T$.)

The relative small \'etale topos is naturally suited to studying
moduli of $T$-flat sheaves on $X$ (as pushforwards of $X$-stacks).

\begin{prop}\label{P:retale}
  Pullback defines a natural equivalence of the category of
  quasi-coherent sheaves on $X$ with the category of quasi-coherent
  sheaves on $X_{\retale}$.  Moreover,
  \begin{enumerate}
  \item there is a stack $\ms C_{X/S}\to X_{\retale}$ whose sections
    over $(U,T)$ parametrize quasi-coherent sheaves on $U$ which are
    $T$-flat and which are locally of finite presentation;
  \item if $\ms M$ denotes any of the stacks from Section
    \ref{sec:stacks-sheaves}, there is a substack $\ms
    M_{X/S}\subset\ms C_{X/S}$ whose sections over $(U,T)$ are
    $T$-flat quasi-coherent sheaves $\ms F$ of finite presentation on
    $U$ such that for each geometric point $t\to T$, the restriction
    $\ms F_t$ lies in $\ms M_{U_t}$;
  \item for each $\ms M$, there is a substack $\ms
    M^{\parf}_{X/S}\subset\ms M_{X/S}$ parametrizing $\ms F$ such that
    each $\ms F_t$ is perfect;
  \item for any $\ms M^{\parf}_{X/S}$ as in the previous item, there
    is a substack $\ms M^{\ms O}_{X/S}(n)\subset\ms M^{\parf}_{X/S}$
    parametrizing sheaves $\ms F$ such that each fiber $\ms F_t$ has
    rank $n$ at each maximal point of $U_t$, along with a global
    trivialization $\det\ms F\simto\ms O_U$.
  \end{enumerate}
\end{prop}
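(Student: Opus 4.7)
The plan is to treat the four items in sequence, each reducing to standard étale descent combined with the facts that the relative small étale site has $(X,S)$ as a final object and that every property defined in Section \ref{sec:stacks-sheaves} is étale-local on the source.

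For the preliminary equivalence of quasi-coherent sheaves, restriction to the final object $(X,S)$ sends a quasi-coherent sheaf on $X_{\retale}$ to one on the small étale site of $X$ (and hence to a quasi-coherent sheaf on $X$ via the classical equivalence). Conversely, a quasi-coherent sheaf $\ms G$ on $X$ pulls back via the composition $U \to X \times_S T \to X$ to give a compatible system, hence a quasi-coherent sheaf on $X_{\retale}$. These functors are inverse because the quasi-coherence axiom on a sheaf $\ms F$ on $X_{\retale}$ forces $\ms F(U,T) = g^{\ast}\ms F(X,S)$ whenever $g : U \to X$ is the structure map.

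For item (1) I would define $\ms C_{X/S}$ fiberwise, with objects over $(U,T)$ the quasi-coherent sheaves of finite presentation on $U$ that are flat over $T$. That this is a stack is formal: both finite presentation and $T$-flatness are preserved under pullback and are étale-local on $U$, and descent for finitely presented quasi-coherent sheaves along étale covers is classical. For items (2) and (3), I would then check that for each $\ms M$ from Section \ref{sec:stacks-sheaves} the fiberwise condition ``$\ms F_t \in \ms M_{U_t}$ for every geometric point $t \to T$'' (and similarly for perfectness) is stable under base change in $T$ and étale-local on $U$: base-change stability is tautological since geometric fibers are preserved, and the étale locality has been recorded already.

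For item (4) the Knudsen--Mumford construction gives a functorial determinant line bundle $\det \ms F$ on $U$ that commutes with arbitrary base change, so imposing the additional datum of a trivialization $\det \ms F \simto \ms O_U$ descends in the étale topology and yields the stack $\ms M^{\ms O}_{X/S}(n)$. The main obstacle I anticipate is the perfectness verification in item (3): I must argue that a $T$-flat finitely presented sheaf whose geometric fibers are all perfect is itself a perfect complex on $U$, not merely a complex with perfect fibers. This is handled by the standard local criterion for perfectness under flatness, but it is the only step in which one uses something more than the stack-theoretic generalities that drive the rest.
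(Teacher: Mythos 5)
Your proposal is correct and matches the paper's (omitted) argument: the paper declares the proof ``essentially a sequence of tautologies'' resting on the \'etale-locality of the defining conditions and descent for finitely presented quasi-coherent sheaves, which is exactly what you spell out. The one nontrivial input you isolate --- that a $T$-flat sheaf of finite presentation with perfect geometric fibers is perfect --- is precisely the ``standard fact'' the paper itself flags as being used implicitly in the last item.
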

\noindent In the last item, we implicitly use the standard fact that a
sheaf which is perfect on each geometric fiber is perfect.  As an
example, we have that $\ms P^{\ms O}_{X/S}(n)$ denotes the stack on
$X_{\retale}$ whose objects over $(U,T)$ are pairs $(\ms F,\delta)$
with $\ms F$ perfect and $T$-flat, $\delta:\det\ms F\simto\ms O_U$ an
isomorphism, and such that for each geometric point $t\to T$, the
sheaf $\ms F_t$ has rank $n$ at each maximal point of $U_t$.

The proof of Proposition \ref{P:retale} is essentially a sequence of
tautologies and is omitted.  Note that we cannot make any claims about
algebraicity of $\ms C$, $\ms T$, or $\ms P$ because such a statement
is meaningless for stacks on $X_{\retale}$.  However, when $f:X\to S$
is a proper morphism of finite presentation between algebraic spaces,
it is of course standard that the pushforward of $\ms C_{X/S}$ to
$S_{\ETALE}$ is an algebraic stack (and similarly for any $\ms
M_{X/S}$ or $\ms M^{\ms O}_{X/S}$, as the added conditions are open
and the addition of a trivialization of the determinant is 
algebraic).

The following lemma will be useful later.

\begin{lem}\label{L:flat tot supt} 
  Suppose $f:Y\to Z$ is a flat morphism of locally Noetherian schemes
  and $\ms F$ is a $Z$-flat coherent sheaf on $Y$.  If the restriction
  of $\ms F$ to every fiber of $f$ is totally supported, then $\ms F$
  is totally supported on $Y$.
\end{lem}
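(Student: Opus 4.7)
The question is local on $Y$, and for a coherent sheaf on a locally Noetherian scheme the stalk of $\send(\ms F)$ at $y$ is $\End_{\ms O_{Y,y}}(\ms F_y)$, so injectivity of $\ms O_Y\to\send(\ms F)$ may be checked stalkwise. Fix $y\in Y$, let $z=f(y)$, and write $A=\ms O_{Y,y}$, $R=\ms O_{Z,z}$, $\mf m=\mf m_R$, $\kappa=R/\mf m$, and $M=\ms F_y$. Then $R\to A$ is a flat local homomorphism of Noetherian local rings, $M$ is a finitely generated $A$-module that is $R$-flat, and the fiber hypothesis implies in particular that the multiplication map $\bar\phi\colon A/\mf m A\hookrightarrow \End_\kappa(M/\mf m M)$ is injective. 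It suffices to show $I := \ann_A(M) = 0$.

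The plan is to prove $I\subset \mf m^j A$ for every $j\geq 1$, whence Krull's intersection theorem applied to the Noetherian local ring $A$ finishes the proof: since $R\to A$ is local, $\mf m A\subset \mf m_A$, so $\bigcap_j\mf m^j A\subset\bigcap_j\mf m_A^j=0$. The case $j=1$ is immediate from the fiber hypothesis: $I\cdot(M/\mf m M)=0$ forces the image of $I$ in $A/\mf m A$ to lie in $\ker\bar\phi=0$.

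For the inductive step, suppose $I\subset\mf m^j A$ and fix $a\in I$. Write $L=\mf m^j/\mf m^{j+1}$ and $V=M/\mf m M$; since $\mf m\cdot L=0$, $L$ is naturally a $\kappa$-vector space. Because $A$ and $M$ are $R$-flat, tensoring the short exact sequence $0\to\mf m^{j+1}\to\mf m^j\to L\to 0$ with $A$ and with $M$ yields natural isomorphisms
\[
\mf m^jA/\mf m^{j+1}A\;\cong\; L\otimes_\kappa(A/\mf m A),\qquad \mf m^jM/\mf m^{j+1}M\;\cong\; L\otimes_\kappa V.
\]
Let $\xi$ denote the image of $a$ in $L\otimes_\kappa(A/\mf m A)$. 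Multiplication by $a$ sends $M$ into $\mf m^j M$ and $\mf m M$ into $\mf m^{j+1}M$, so it descends to a $\kappa$-linear map $\bar a\colon V\to L\otimes_\kappa V$; unwinding the identification shows that $\bar a(v)=\xi\cdot v$, where $\xi$ acts on $V$ through the inclusion $A/\mf m A\hookrightarrow\End_\kappa(V)$. Since $aM=0$, $\bar a=0$, so $\xi\cdot v=0$ for every $v\in V$. A basis argument (choose a $\kappa$-basis of $L$) shows that any element of $L\otimes_\kappa\End_\kappa(V)$ acting as zero on $V$ must itself be zero; combined with the injection $A/\mf m A\hookrightarrow\End_\kappa(V)$, which remains injective after tensoring with the $\kappa$-flat module $L$, this forces $\xi=0$, i.e., $a\in\mf m^{j+1}A$.

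The main obstacle is the inductive step, where one must simultaneously exploit the $R$-flatness of $A$ and $M$ (to obtain the clean $\kappa$-tensor description of the successive quotients) and the linear-algebraic observation about $L\otimes_\kappa\End_\kappa(V)$. Once $I\subset\mf m^j A$ is known for every $j$, Krull's intersection theorem on the Noetherian local ring $A$ seals the result.
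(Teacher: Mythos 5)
Your proof is correct and follows essentially the same strategy as the paper's: an infinitesimal induction up the powers of the maximal ideal of the base, using flatness to identify the graded pieces with data pulled back from the closed fiber, followed by Krull's intersection theorem to pass from the associated graded statement back to $A$ itself. The only differences are bookkeeping: the paper filters by small extensions and linearizes the problem via an embedding $\End(F)\hookrightarrow F^n$ before applying the snake lemma, whereas you track the annihilator directly through the $\mf m$-adic filtration and use the tensor decomposition $\mf m^jA/\mf m^{j+1}A\cong(\mf m^j/\mf m^{j+1})\otimes_\kappa(A/\mf m A)$.
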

\begin{proof} 
  We may assume that $X=\spec B$ and $S=\spec A$ are local schemes and
  that $f$ is the map associated to a local homomorphism $\phi:A\to
  B$.  Write $F$ for the stalk of $\ms F$ at the closed point of $B$.
  Choosing generators $x_1,\ldots,x_n$ for $F$, we find a surjection
  $B^n\surj F$ which yields an injection $\End(F)\inj F^n$.  The
  composition of this injection with the natural inclusion of $B$
  sends $1\in B$ to the $n$-tuple $(x_1,\ldots,x_n)\in F^n$.  We will
  show that this map $\iota:B\to F^n$ is an injection.  Note that
  $\iota$ respects base change in the sense that for any $A$-algebra
  $C$, $\iota\tensor_A C$ is the map corresponding to the composition
  $C\to\End_C(F\tensor_A C)\to (F\tensor_A C)^n$.  As the right-hand
  map in that sequence is always an injection, we find that the
  left-hand map is an injection if and only if $\iota\tensor_A C$ is
  an injection.

  We proceed by ``infinitesimal induction'' relative to $A$, i.e.,
  write $A$ with the $\mf m_A$-adic topology as an inverse limit of
  small extensions $\{A_m\}$ with $A_0=k(A)$, the residue field of
  $A$.  We will show that $\invlim\iota_m:\widehat B\to\widehat F^n$
  is an injection.  Krull's theorem and the obvious compatibility then
  show that $\iota$ itself is an injection.

  By hypothesis $\iota_0$ is an injection.  Suppose by induction that
  $\iota_m$ is an injection.  Let $\eps$ generate the kernel of
  $A_{m+1}\to A_m$.  By flatness, there are identifications $\eps
  B_{m+1}\cong(\eps)\tensor_{A_{m+1}} B_{m+1}\cong B_0$ and $\eps
  F_{m+1}^n\cong(\eps)\tensor F_{m+1}^n\cong F_0^n$, and under these
  identifications, $\eps\cdot\iota_{m+1}$ is identified with
  $\iota_0$.  Now consider the diagram
$$\xymatrix{0\ar[r] & \eps B_{m+1}\ar[r]\ar[d] & B_{m+1}\ar[r]\ar[d] & 
  B_m\ar[r]\ar[d] & 0\\
  0\ar[r] & \eps F^n_{m+1}\ar[r] & F^n_{m+1}\ar[r] & F^n_m\ar[r] & 0.
}$$ By the Snake Lemma and the inductive hypothesis, the kernel of the
left-hand vertical map is identified with the kernel of the middle map
(which is $\iota_{m+1}$).  But the left-hand map is identified with
$\iota_0$, hence is injective.
\end{proof}

In particular, a section of $\ms T^{\ms O}_{X/S}$ over $T\to S$ lies
in $\ms T^{\parf}_{X_T}(X_T)$.  This will be essential when we
study relative generalized Azumaya algebras in Section \ref{L:flat tot supt}.

\subsection{Azumaya algebras}\label{S:azumaya}

For the sake of completeness, we recall a few basic facts
about Azumaya algebras, which can be thought of as coherent models for
$\PGL_{n}$-torsors.  We suppose that $f:X\to S$ is a proper morphism
of finite presentation between algebraic spaces.  By abuse of
notation, we will also write $f$ for the induced geometric morphism
$X_{\retale}\to S_{\ETALE}$

Let $G\to S$ a flat linear algebraic $S$-group of finite presentation.
It follows from the definition that $f_{\ast}\B{G_{X}}$ is the stack
of (\'etale) $G$-torsors on $X$, whose sections over an $S$-scheme $T$
are $G_{T}$-torsors on $X_{T}$.

\begin{lem} 
  The stack $f_{\ast}\B{G_{X}}$ is an Artin stack locally of
  finite presentation over $S$.
\end{lem}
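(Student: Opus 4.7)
The plan is to identify $f_{\ast}\B{G_X}$ with a Hom stack out of $X$ and then invoke a known representability theorem for such Hom stacks.

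First, I would spell out the sections. For an $S$-scheme $T$, the groupoid $(f_{\ast}\B{G_X})(T)$ is by definition the groupoid of $G_{X_T}$-torsors on $X_T = X\times_S T$. Such a torsor is the same data as an $S$-morphism $X_T \to \B G$, so there is a natural equivalence of $S$-stacks
\[
f_{\ast}\B{G_X} \;\simeq\; \uhom_S(X, \B G).
\]

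Next, I would check that $\B G$ is itself an Artin stack locally of finite presentation over $S$ with well-behaved diagonal. Since $G\to S$ is flat and of finite presentation, the tautological map $S\to\B G$ is a smooth surjection of finite presentation, and the $2$-fiber product $S\times_{\B G}S$ is $G$, so $\B G$ is Artin and locally of finite presentation over $S$. Because $G$ is linear algebraic (hence affine over $S$) and of finite presentation, the diagonal $\B G\to\B G\times_S\B G$ is representable by affine $S$-schemes of finite presentation, in particular separated and quasi-compact.

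Finally, I would invoke Olsson's theorem on Hom stacks (from \emph{Hom stacks and restriction of scalars}): if $X\to S$ is proper, flat, and of finite presentation between algebraic spaces, and $\ms Y\to S$ is an Artin stack locally of finite presentation with separated quasi-compact diagonal, then $\uhom_S(X,\ms Y)$ is an Artin stack locally of finite presentation over $S$. All hypotheses have been verified in the previous step, so the conclusion follows.

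The main obstacle is really just the careful verification of the hypotheses of the Hom stack theorem; once the reduction to $\uhom_S(X,\B G)$ is in place, the algebraicity is imported wholesale from the literature. One mild subtlety worth double-checking is the identification in the first step: a section over $T$ of the pushforward in the sense defined earlier in the paper (sections over $\pi^{-1}T$ in the relevant topos) agrees with the groupoid of \'etale $G_{X_T}$-torsors on $X_T$, which in turn is canonically the groupoid of maps $X_T\to\B G$ over $S$; since all three are defined via the same descent data for $G$-torsors, the equivalence is tautological but worth recording.
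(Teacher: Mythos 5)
Your reduction of $f_{\ast}\B{G_X}$ to the Hom stack $\uhom_S(X,\B G)$ is a legitimate and genuinely different strategy from the paper's, but as written the final step has a gap: Olsson's Hom-stack theorem does not have the hypotheses you attribute to it. Olsson requires the \emph{target} stack to have \emph{finite} diagonal over $S$ (his proof passes through rigidification and Keel--Mori coarse spaces, which need finite inertia), whereas $\B G$ for an affine group scheme such as $\GL_n$ or $\PGL_n$ has affine but not finite diagonal. So the theorem you cite does not apply to $\uhom_S(X,\B G)$. The statement you actually need --- algebraicity of $\uhom_S(X,\ms Y)$ for $X$ proper flat finitely presented and $\ms Y$ with merely quasi-compact separated (e.g.\ affine) diagonal --- is due to Aoki (with erratum) and in full generality to Hall--Rydh; with one of those references in place of Olsson, your argument closes up. A second, smaller inaccuracy: the paper only assumes $G\to S$ flat of finite presentation, so the tautological map $S\to\B G$ is an fppf surjection but need not be smooth; algebraicity of $\B G$ then follows from Artin's theorem that an fppf presentation suffices, not from exhibiting a smooth cover directly.

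It is worth seeing how the paper sidesteps exactly this difficulty. Rather than mapping into the Artin stack $\B G$, it chooses an embedding $G\inj\GL_{n,S}$, uses the classical fact that $f_{\ast}\B{\GL_n}$ is an Artin stack locally of finite presentation, and shows that the extension-of-structure-group morphism $f_{\ast}\B G\to f_{\ast}\B{\GL_n}$ is representable: its fiber over a torsor $V$ on $X_T$ is $f_{\ast}(V/G)$, which is realized inside $\uhom_T(X_T,V/G)$ with $V/G$ a \emph{separated algebraic space} of finite presentation. Thus the only Hom-space representability needed is the classical one for maps from a proper algebraic space to a separated algebraic space, and no Hom-stack theorem for stacky targets is invoked. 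Your route is cleaner and more conceptual once the correct representability theorem is available, but it imports strictly stronger input; the paper's route trades that for an elementary dévissage through $\GL_n$.
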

\begin{proof}[Sketch of proof] By the usual arguments, we may assume that $S$
  is the spectrum of an excellent Noetherian ring (even a finite type
  $\Z$-algebra if we so desire) and that there is a closed immersion
  $G\inj\GL_{n,S}$ for some $n$.  It is well-known that the stack
  $f_{\ast}\B\GL_{n}$ is an Artin stack locally of finite presentation
  over $S$.  (One can see
  \cite{l-mb} for the case of $X$ projective or apply the main theorem
  of \cite{artin} -- using the standard deformation theory of
  \cite{sga1} and the usual Grothendieck existence theorem of
  \cite{ega3-1} -- in the arbitrary proper case.)  Furthermore,
  extension of structure group yields a 1-morphism
  $\eps:f_{\ast}\B{G}\to f_{\ast}\B{\GL_{n}}$; it suffices to show
  that $\eps$ is representable by algebraic spaces locally of finite
  presentation.  To see this, let $T\to f_{\ast}\B{\GL_{n}}$ be any morphism
  over $S$, corresponding to some $\GL_{n}$-torsor $V$ on $X_T$.  The
  fiber product $f_{\ast}\B{G}\times_{f_{\ast}\B\GL_{n}}T$ is
  identified with the sheaf of reductions of structure group of $V$ to
  $G$, which is simply $V/G$.  Thus, we will be done if we show that
  $f_{\ast}(V/G)$ is an algebraic space locally of finite presentation
  over $T$.

  By Corollary 6.3 of \cite{artin}, the quotient sheaf $V/G$ is
  representable by a separated algebraic space of finite presentation
  over $X_T$.  The fact that $f_{\ast}(V/G)$ is an algebraic space may
  be seen in several ways.  Here is one of them: we can identify it
  with the fiber of $\uhom_T(X_T,V/G)\to\uhom_T(X_T,X_T)$ over the
  section $\id_{X_T}$.  Thus, it suffices to show that
  $\uhom_T(X_T,V/G)$ is an algebraic space locally of finite
  presentation over $T$; this is a standard result, as $X_T$ is proper
  and $V/G$ is separated.  Its algebraicity follows from e.g.\ Artin's
  theorem or from the methods of \cite{lieblich-algebras}.
\end{proof}

In the case of $G=\PGL_{n,S}$, there is a natural closed immersion
$G\inj\GL_{n^{2}}$ given by the action of $\PGL_{n}$ on $\M_{n}(\ms
O)$ by conjugation (the adjoint representation).  In this case, one in
fact has that $\PGL_{n}=\saut_{\text{alg}}(\M_{n}(\ms O))$.  Thus,
there this is a very concrete way to describe those bundles admitting
a reduction of structure group to $\PGL_{n}$: they are those bundles
such that the associated locally free sheaves of rank $n^{2}$ carry
the structure of \emph{Azumaya algebra\/}.

\begin{defn}\label{D:az-alg}
  An \emph{Azumaya algebra\/} $\ms A$ of degree $n$ on a ringed topos
  $T$ is a form of $\M_{n}(\ms O_{T})$.
\end{defn}

More precisely, to give a reduction of structure group on a
$\GL_{n^2}$-torsor is to give a multiplication on the associated
locally free sheaf making it into an Azumaya algebra.  The diagram
$$\xymatrix{1\ar[r] & \G_m\ar[r] & \GL_n\ar[r] & \PGL_n\ar[r] & 1\\
  1\ar[r] & \m_n\ar[r]\ar[u] & \SL_n\ar[r]\ar[u] &
  \PGL_n\ar[r]\ar@{=}[u] & 1}$$ (where the horizontal sequences are
exact in the fppf topology, with the bottom exact in the \'etale
topology only if $n$ is invertible on $S$) gives rise to a diagram of
coboundary maps in non-abelian (flat) cohomology
$$\xymatrix{ & \H^2(T,\G_m) &\\
  \H^1(T,\PGL_n)\ar[ur]\ar[dr] & & \\
  & \H^2(T,\m_n).\ar[uu] & }$$ In Giraud's theory (section V.4.2 of
\cite{giraud}), one can be more precise: given a $\PGL_{n}$-torsor
$P\to T$, the cohomology class $\cl(P)\in\H^{2}(T,\m_{n})$ is precisely
that given by the $\m_{n}$-gerbe of liftings (reductions of structure
group) of $T$ to an $\SL_{n}$-torsor.  In the language of Azumaya
algebras, this is accomplished by looking at the gerbe of
trivializations: a trivialization of $\ms A$ is given by a triple
$(\ms V,\delta,\phi)$ with $\ms V$ a locally free sheaf,
$\delta:\det\ms V\simto\ms O$ a trivialization of the determinant, and
$\phi:\send(\ms V)\simto\ms A$ an isomorphism.

\subsection{Twisted objects and rigidifications}
\label{sec:twist-objects-rigid}

In this section, we give a possible definition for a \emph{twisted
  object\/} in a stack (relative to an abelian gerbe).  We then review
a basic stack-theoretic construction of Abramovich, Corti, and Vistoli
\cite{a-c-v} and show how pushing it forward naturally yields
coverings by stacks of twisted objects.

\subsubsection{Twisted objects}\label{sec:twisted-objects}

Let $\ms S\to X$ be a stack.  Suppose (for the sake of
simplicity) that $A$ is an abelian sheaf on $X$ admitting a central
injection $\chi:A\to\ms I(\ms S)$ into the inertia stack of $\ms S$.
Let $\ms X\to X$ be an $A$-gerbe on $X$.  (Since $A$ is abelian, we
may view this as an $X$-stack along with an identification of $A$ with
the inertia stack $\ms I(\ms X)$.)

\begin{defn}
  An \emph{$\ms X$-twisted section of $\ms S$\/} over $T\to X$ is a
  $1$-morphism $f:\ms X\times_X T\to\ms S$ such that the induced map
  $A\to\ms I(\ms X\times_X T)\to f^{\ast}\ms I(\ms S)$ is identified
  with the pullback under $f$ of the canonical inclusion $\chi:A\to\ms
  I(\ms S)$.
\end{defn}

The collection of $\ms X$-twisted sections of $\ms S$ forms a substack
of the Hom-stack $\chom_X(\ms X,\ms S)$, as the condition on the
inertial morphism is local on the base of any family.  We will write
this substack as $\ms S^{\ms X}$.  Note that there is a natural
central injection $A\to\ms I(\ms S^{\ms X})$ given by acting on a map
$\ms X\to\ms S$ by acting on sections of $\ms S$, or (what amounts to
the same thing by the twisted condition) on the sections
of $\ms X$.

The following transition results will prove useful.

\begin{lem}\label{L:transition-2}
  Let $\ms S$ be an $X$-stack and $\sigma:X\to\ms S$ a section.  There
  is an essentially unique $1$-morphism $\B{\saut(\sigma)}\to\ms S$ sending the
  section corresponding to the trivial torsor to $\sigma$. 
\end{lem}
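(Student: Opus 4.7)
The plan is to exploit the universal property of $\B\saut(\sigma)$ as a classifying stack: $X$-morphisms $\B\saut(\sigma)\to\ms S$ should be in $2$-equivalence with pairs $(\tau,\phi)$, where $\tau\in\ms S(X)$ is the image of the trivial torsor and $\phi\colon\saut(\sigma)\to\saut(\tau)$ is a homomorphism of sheaves of groups (recording how $\saut(\sigma)$, which acts on the trivial torsor by multiplication, is sent into the inertia of $\ms S$). The desired morphism will correspond to the tautological pair $(\sigma,\id_{\saut(\sigma)})$, and uniqueness will be immediate from this description.

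Concretely, I would construct the morphism by descent. Given $T\to X$ and a $\saut(\sigma_T)$-torsor $P$ on $T$, choose an \'etale cover $\{U_i\to T\}$ trivializing $P$. Over each $U_i$ assign the object $\sigma|_{U_i}\in\ms S(U_i)$. The transition cocycle of $P$ consists of sections of $\saut(\sigma)$ over the overlaps $U_{ij}$, which by the very definition of $\saut(\sigma)$ are automorphisms of $\sigma|_{U_{ij}}$ in $\ms S$; they satisfy the cocycle condition on triple overlaps because $P$ is a torsor. Since $\ms S$ is a stack, these data glue the $\sigma|_{U_i}$ to an object $F(P)\in\ms S(T)$ that is unique up to unique isomorphism. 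Functoriality in $P$ and in $T$ follows from functoriality of descent, and when $P$ is trivial one recovers $\sigma|_T$ by construction.

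For essential uniqueness, let $F$ and $F'$ be two such morphisms and fix the identifications $F(\text{triv})\simeq\sigma\simeq F'(\text{triv})$. For any torsor $P$ trivialized on a cover $\{U_i\}$, these identifications give isomorphisms $F(P)|_{U_i}\simeq\sigma|_{U_i}\simeq F'(P)|_{U_i}$; the compatibility of $F$ and $F'$ with the $\saut(\sigma)$-action on the trivial torsor forces these local isomorphisms to agree on overlaps, so by descent they glue to a unique $2$-isomorphism $F\simeq F'$ extending the given one. The main obstacle is not conceptual but purely $2$-categorical bookkeeping: one must verify that the gluing data really do constitute descent data in the stack-theoretic sense, and that the various canonical isomorphisms produced by the construction are natural in $T$ and in $P$. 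This is a matter of unwinding the definitions of $\saut$, of a stack, and of a $G$-torsor, and presents no real difficulty once one commits to the notation.
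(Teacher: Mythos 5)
Your construction is correct, but it runs in the opposite direction from the paper's. You build the morphism $\B{\saut(\sigma)}\to\ms S$ directly, as the contracted product $P\mapsto P\wedge^{\saut(\sigma)}\sigma$: trivialize the torsor on a cover, place a copy of $\sigma$ on each patch, and use the transition cocycle (valued in $\saut(\sigma)$, i.e.\ in automorphisms of $\sigma$) as a descent datum, which is effective because $\ms S$ is a stack. The paper instead defines a functor going the other way, $\gamma:\widebar\sigma\to\B{\saut(\sigma)}$, $Y\mapsto\isom(Y,\sigma)$, where $\widebar\sigma$ is the stack-theoretic image of $\sigma$ (objects locally isomorphic to $\sigma$), verifies that $\gamma$ is fully faithful and essentially surjective, and then takes the quasi-inverse composed with the inclusion $\widebar\sigma\inj\ms S$. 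The two maps are quasi-inverse to one another, so the content is the same, but the burdens differ: the paper must check full faithfulness of the $\isom$-sheaf functor and then appeal abstractly to the existence of a quasi-inverse, while you must carry out the cocycle bookkeeping (independence of the chosen trivialization, functoriality in $P$ and $T$) that you correctly flag as the only real work. One point to make explicit in your uniqueness argument: ``sending the trivial torsor to $\sigma$'' must be understood as including compatibility with the identification $\saut(\text{triv})\cong\saut(\sigma)$, i.e.\ the induced map on automorphism sheaves of the distinguished objects is the identity; you invoke exactly this when you claim the local isomorphisms $F(P)|_{U_i}\simeq F'(P)|_{U_i}$ agree on overlaps, and without it the conclusion would fail (two morphisms could send the trivial torsor to $\sigma$ yet differ by a nontrivial endomorphism of $\saut(\sigma)$). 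With that reading fixed, your proof is complete and at the same level of rigor as the paper's.
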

\begin{proof}
  Let $\widebar{\sigma}\subset\ms S$ be the stack-theoretic image of
  $\sigma$ (so that $\sigma$ factors as an epimorphism
  $X\to\widebar{\sigma}$ followed by a monomorphism
  $\widebar{\sigma}\to\ms S$).  By definition, $\widebar{\sigma}$ is
  the substack of $\ms S$ consisting of objects which are locally
  isomorphic to $\sigma(X)$.  Given an object $Y$ of
  $\widebar{\sigma}$ over some $X$-space $T$, the sheaf
  $\isom_T(Y,\sigma(T))$ is an $\saut(\sigma)_T$-torsor; this defines a
  $1$-morphism $\gamma:\widebar{\sigma}\to\B\saut(\sigma)$.  

  To check that this is a $1$-isomorphism, we choose a cleavage for
  $\ms S$.  It is enough to prove that $\gamma$ is fully faithful on
  fiber categories, as it is clear that any torsor is locally in the
  image of $\gamma$.  Let $Y$ and $Y'$ be two objects of
  $\widebar{\sigma}_T$, and consider the induced map of sheaves
  $\isom(Y,Y')\to\isom(\isom(X,Y),\isom(X,Y'))$.  Since $Y$ and $Y'$
  are both locally isomorphic to $X$, this map of sheaves is trivially
  a surjection.  Thus, we are done once we show that it is injective,
  for which it suffices (by the universality of the argument) to show
  that it is injective on global sections.

  The map described in the statement is simply the $1$-inverse of
  $\widebar{\sigma}\to\B\saut(\sigma)$.
\end{proof}

\begin{prop}\label{P:transition-0}
  Given a section $\sigma:X\to\ms X$, the natural restriction map 
$\ms S^{\ms X}\to\uhom_X(X,\ms S)=\ms S$ is a $1$-isomorphism.
\end{prop}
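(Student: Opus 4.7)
The plan is to construct an explicit quasi-inverse using the fact that $\sigma$ trivializes the gerbe. First, the section $\sigma$ determines a canonical $1$-isomorphism $\ms X \simto \B A$ of $X$-stacks (sending the trivial $A$-torsor to $\sigma$), and compatibly, for every $T\to X$, an identification $\ms X\times_X T\simto \B{A_T}$. Under this identification the restriction map $\ms S^{\ms X}(T)\to \ms S(T)$ sends an $\ms X$-twisted map $f:\B{A_T}\to\ms S$ to the object $f(\ast)\in\ms S(T)$ obtained by evaluating on the trivial torsor (equivalently, restricting along the pullback of $\sigma$).

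To build the quasi-inverse, let $s\in\ms S(T)$ and consider its automorphism sheaf $\saut(s)$ on $T$. The central inclusion $\chi:A\to\ms I(\ms S)$ pulls back along $s$ to a central inclusion $\chi_s:A_T\hookrightarrow\saut(s)$. By Lemma \ref{L:transition-2} there is an essentially unique $1$-morphism $\B{\saut(s)}\to\ms S$ sending the trivial torsor to $s$. Precomposing with the map $\B{A_T}\to \B{\saut(s)}$ induced by $\chi_s$, and then with the trivialization $\ms X\times_X T\simto\B{A_T}$, yields a morphism $\tilde s:\ms X\times_X T\to\ms S$. By construction the map induced on inertia $A\to \tilde s^\ast\ms I(\ms S)$ is precisely $\chi$, so $\tilde s$ is $\ms X$-twisted, and clearly $\tilde s(\ast)=s$ so $\tilde{(\cdot)}$ is a section of the restriction map.

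It remains to check that extension recovers the twisted morphism one started with, which is the only step requiring care. Given an $\ms X$-twisted $f:\B{A_T}\to\ms S$ with $s=f(\ast)$, Lemma \ref{L:transition-2} implies that $f$ factors through $\B{\saut(s)}$ (since the substack $\widebar\sigma\subset\ms S$ locally isomorphic to $s$ is $1$-isomorphic to $\B{\saut(s)}$, and the image of $f$ lands there). This factorization is determined by a homomorphism $A_T\to\saut(s)$, and the $\ms X$-twisted hypothesis says exactly that this homomorphism is $\chi_s$. Thus $f=\tilde s$. The main subtlety, and the only non-formal point, is extracting a genuine factorization through $\B{\saut(s)}$ from the stack-theoretic image, which is where Lemma \ref{L:transition-2} does the essential work; everything else is a bookkeeping exercise about $2$-categorical data that is automatic once the trivialization $\sigma$ is fixed.
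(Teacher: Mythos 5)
Your proposal is correct and follows essentially the same route as the paper: the inverse $\ms S\to\ms S^{\ms X}$ is built by composing $\B{A_T}\to\B{\saut(s)}\to\ms S$ via Lemma \ref{L:transition-2} and then using $\sigma$ to identify $\ms X$ with $\B A$. You simply spell out the verification that the two composites are the identity, which the paper leaves implicit.
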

\begin{proof}
 Given a section of $\ms S$, the injection $A\to\ms I(\ms S)$ combined
 with Lemma \ref{L:transition-2} yields an essentially unique induced map
 $\B{A}\to\ms S$ which respects the $A$-structures on the inertia
 stacks.  This construction gives an isomorphism $\ms S\to \ms
 S^{\B{A}}$.  Using $\sigma$ to identify $\ms X$ with $\B{A}$, we have
 just described the inverse of the natural map given in the statement.
\end{proof}

\begin{prop}\label{P:transition}
  Let $\ms X$ and $\ms Y$ be $A$-gerbes on $X$.  There is a natural
  $1$-isomorphism
$$(\ms S^{\ms X})^{\ms Y}\simto\ms S^{\ms Y\wedge\ms X}$$
of stacks of twisted objects.  
\end{prop}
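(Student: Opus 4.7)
The plan is to unwind the definitions of both sides as Hom-stacks out of $\ms X\times_X\ms Y\times_X T$ satisfying certain inertia conditions, and then recognize the condition governing the target as exactly encoding the descent from $\ms X\times_X\ms Y$ to the contracted product $\ms Y\wedge\ms X$.

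First, I would recall (or adopt as working definition) that, since $A$ is abelian, $\ms Y\wedge\ms X$ is the stack-theoretic quotient of $\ms X\times_X\ms Y$ by the antidiagonal action $a\mapsto(a,a^{-1})$ of $A$ on the $A\times A$-inertia of the product gerbe. This comes with a canonical epimorphism $\pi:\ms X\times_X\ms Y\to\ms Y\wedge\ms X$ under which the two embeddings $A\rightrightarrows\ms I(\ms X\times_X\ms Y)$ (from the $\ms X$- and $\ms Y$-factors) have a common image equal to $\ms I(\ms Y\wedge\ms X)=A$.

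Next, I would unwind a $T$-section of $(\ms S^{\ms X})^{\ms Y}$. By definition this is a morphism $f:\ms Y\times_X T\to\ms S^{\ms X}$ satisfying the twisted-inertia condition relative to $\ms Y$. By the definition of $\ms S^{\ms X}$ (together with the universal property of $\chom$), giving $f$ is the same as giving a morphism
\[
\tilde f:\ms X\times_X\ms Y\times_X T\longrightarrow\ms S
\]
such that the inertia arrow from the $\ms X$-factor is identified with the canonical $\chi:A\to\ms I(\ms S)$; the twisted condition on $f$ itself translates, via this correspondence, to the analogous statement for the $\ms Y$-factor. Thus a $T$-section of $(\ms S^{\ms X})^{\ms Y}$ is precisely a morphism $\tilde f$ along which \emph{both} injections $A\to\ms I(\ms X\times_X\ms Y\times_X T)$ (from the $\ms X$- and $\ms Y$-factors) are identified with $\chi$.

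Under this description, the antidiagonal action $(a,a^{-1})$ of $A$ acts on $\tilde f$ by $\chi(a)\cdot\chi(a)^{-1}=\id$, so $\tilde f$ descends uniquely through $\pi\times\id_T$ to a morphism $(\ms Y\wedge\ms X)\times_X T\to\ms S$, and by the identification of inertias in the previous paragraph this descended morphism is automatically $(\ms Y\wedge\ms X)$-twisted. Conversely, every $(\ms Y\wedge\ms X)$-twisted morphism pulls back along $\pi\times\id_T$ to a $\tilde f$ satisfying both inertia conditions. The two assignments are mutually inverse and 2-functorial in $T$, yielding the claimed $1$-isomorphism; functoriality in $\ms S$ and compatibility with the natural $A$-structures on the inertia are visible from the construction, so the isomorphism is natural in the evident sense.

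The only real obstacle is bookkeeping: one must fix sign/orientation conventions for the contracted product and for the pair of $A$-actions on $\ms I(\ms X\times_X\ms Y)$ so that the slogan \emph{``both inertia maps equal $\chi$''} translates correctly to descent along the \emph{antidiagonal} rather than the diagonal. Once conventions are pinned down at the start of Section \ref{sec:twisted-objects}, all verifications are tautological.
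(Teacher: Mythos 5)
Your proof is correct and follows essentially the same route as the paper: both identify $(\ms S^{\ms X})^{\ms Y}$ and $\ms S^{\ms Y\wedge\ms X}$ inside $\uhom(\ms Y\times\ms X,\ms S)$ via the Hom-stack adjunction, and both hinge on the fact that the composite $A\times A\to\ms I(\ms Y\times\ms X)\to m^{\ast}\ms I(\ms Y\wedge\ms X)=A$ is addition, so that the ``both inertia maps equal $\chi$'' condition matches the single twisted condition downstairs. You merely make explicit the descent step (realizing $\ms Y\wedge\ms X$ as the quotient of the product by the antidiagonal $A$) that the paper compresses into the existence and uniqueness of the dashed arrow in its diagram of monomorphisms.
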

\begin{proof}
  Consider the diagram
$$\xymatrix{\uhom(\ms Y,\uhom(\ms X,\ms S))\ar@{=}[r] & \uhom(\ms
  Y\times\ms X,\ms S)\\
& \uhom(\ms Y\wedge\ms X,\ms S)\ar[u]\\
(\ms S^{\ms X})^{\ms Y}\ar[uu]\ar@{-->}[r] & \ms S^{\ms Y\wedge\ms X}\ar[u].}$$
The top equality comes from the natural adjunction and the uppermost
vertical right map comes from the natural map $m:\ms Y\times\ms X\to\ms
Y\wedge\ms X$.  The map $A\times A\to\ms I(\ms Y\times\ms X)\to
m^\ast\ms I(\ms Y\wedge\ms X)=A$ is just the addition map, from which
it follows that a unique (up to $2$-isomorphism) dashed arrow exists
filling in the diagram.  The fact that every arrow is either an
equality or an inclusion shows that the dashed arrow is a $1$-isomorphism. 
\end{proof}

\begin{para}\label{Para:cocycley}
  There is a more ad hoc description of $\ms X$-twisted
  objects into terms of a cocycle representing the cohomology class of
  $\ms X$.  This can be useful for its value in
  constructing quick (but perhaps not philosophically satisfying)
  proofs, but we will not use this formalism.  In order to make the
  definition, we must fix a cleavage (pseudo-functor structure) on $\ms S$.  
\end{para}
Given $\ms X$, we can choose a hypercovering $U_{\bullet}\to X$ which
splits $\ms X$, in the following sense:
\begin{enumerate}
\item there is a section $\sigma$ of $\ms X$ over $U_0$, and
\item the two pullbacks of $\sigma$ to $U_1$ are isomorphic, say via
  $\phi$.
\end{enumerate}
Computing the coboundary of $\phi$ and using the fact that $\ms X$ is
an $A$-gerbe yields a $2$-cocycle $a\in\Gamma(U_2,A)$.  It is a
standard fact that this cocycle represents the same cohomology class
as $\ms X$.  Slightly more subtle is the fact that one can explicitly
construct a gerbe from a cocycle on a hypercovering.  (This gerbe is
just the stack of ``twisted $A$-torsors''; we will not describe it in
detail here.)

\begin{defn}
  Given $(U_{\bullet},a)$ as above, a $(U_{\bullet},a)$-twisted
  section of $\ms S$ over $T\to X$ is given by
  \begin{enumerate}
  \item a $1$-morphism $\phi:U_0\times_X T\to\ms S$, and
  \item a $2$-morphism $\psi:(p^1_0)^{\ast}\phi\simto
    (p^1_1)^{\ast}\phi$, where $p^1_0$ and $p^1_1$ are the two natural
    maps $U_1\times_X T\to U_0\times_X T$,
  \end{enumerate}
  subject to the condition that the coboundary
  $\delta\psi\in\aut((p^2_0)^{\ast}\phi)$ is equal to the action of
  $a$ (via the inclusion of $A$ in $\ms I(\ms S)$).
\end{defn}

It is clear that $(U_{\bullet},a)$-twisted objects of $\ms S$ form a
stack on $X$.  

Moreover, given an $\ms X$-twisted object of $\ms S$
over $T$, the construction of $(U_{\bullet},a)$ induces a
$(U_{\bullet},a)$-twisted object of $\ms S$ over $T$.

\begin{prop}\label{P:twisty}
  There is a natural equivalence between the
  stack of $\ms X$-twisted objects of $\ms S$ and
  $(U_{\bullet},a)$-twisted objects of $\ms S$.
\end{prop}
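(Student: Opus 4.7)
The plan is to prove the equivalence by hyperdescent for $\ms S$ along the simplicial object $U_\bullet\times_X T\to\ms X\times_X T$ obtained by composing the projection $U_\bullet\to X$ with the section $\sigma$. Since $\ms X$ is an $A$-gerbe, every object of $\ms X$ is \'etale-locally isomorphic to $\sigma$, so $U_0\times_X T\to\ms X\times_X T$ is an \'etale cover, and the chosen isomorphism of the two pullbacks of $\sigma$ to $U_1$ promotes this cover to a hypercovering of $\ms X\times_X T$.

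Because $\ms S$ is a stack, the fiber category of $1$-morphisms $\ms X\times_X T\to\ms S$ is then equivalent by hyperdescent to the category of descent data along this hypercovering, namely a $1$-morphism $\phi:U_0\times_X T\to\ms S$, a $2$-isomorphism $\psi:(p^1_0)^{\ast}\phi\simto (p^1_1)^{\ast}\phi$ on $U_1\times_X T$, and a coherence condition on $U_2\times_X T$. In the absence of any twisting condition this coherence is $\delta\psi=\id$; for an $\ms X$-twisted map the induced action on inertia is required to send the $A$-inertia of $\ms X$ through the central inclusion $\chi:A\to\ms I(\ms S)$. By the very construction of the cocycle, the coboundary on $U_2$ of the chosen isomorphism of pullbacks of $\sigma$ is the class $a\in\Gamma(U_2,A)$, so the coherence condition becomes $\delta\psi=\chi(a)$, which is exactly the $(U_{\bullet},a)$-twisted condition.

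The same hyperdescent description applied to $2$-morphisms (which correspond to $2$-isomorphisms on $U_0\times_X T$ compatible with the gluing data on $U_1\times_X T$) gives full faithfulness, while essential surjectivity on sections is obtained by reversing the construction: from $(\phi,\psi)$ satisfying the twisted cocycle condition, the hyperdescent datum glues to a $1$-morphism $\ms X\times_X T\to\ms S$ whose action on $A$-inertia agrees with $\chi$ and which is therefore $\ms X$-twisted. Functoriality in $T$ is automatic, as hyperdescent is a stack-level statement.

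The main obstacle is the bookkeeping behind the identification $\delta\psi=\chi(a)$: one must track carefully which face maps of $U_1$ and which simplex of $U_2$ enter where, in order to match the conventions under which $a$ was extracted from $(U_\bullet,\sigma,\phi)$ in the setup preceding the proposition. This verification is mechanical but is essentially the only non-tautological content of the proof; everything else reduces to the fact that $\ms S$ is a stack and that $U_\bullet\to\ms X$ is a hypercovering in the \'etale topology.
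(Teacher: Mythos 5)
Your forward construction (compose with $\sigma$, transport along the cleavage, observe that the coboundary acts through $a$) is exactly the paper's, but the step you lean on to get an equivalence --- that $U_\bullet\times_X T\to\ms X\times_X T$ is a hypercovering --- fails whenever $A$ is nontrivial, and it fails at precisely the point where the content of the proposition lives. The relevant matching object in degree one is $(\cosk_0(U_\bullet/\ms X))_1=U_0\times_{\ms X}U_0$, which is the $A$-torsor $\isom_{U_0\times_X U_0}(p_0^{\ast}\sigma,p_1^{\ast}\sigma)$ over $U_0\times_X U_0$; the map $U_1\to U_0\times_{\ms X}U_0$ determined by your chosen isomorphism $\phi$ of the two pullbacks of $\sigma$ is the graph of a section of the pullback of this torsor to $U_1$, hence is not an epimorphism. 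Your own argument signals the problem: descent of a $1$-morphism along a genuine hypercovering always imposes the strict cocycle condition $\delta\psi=\id$, independently of any extra condition you place on the glued morphism, so it can never produce the condition $\delta\psi=\chi(a)$ by fiat. The class $a$ enters exactly because $U_\bullet$ fails to be a hypercovering of $\ms X$, by the amount recorded in the torsor above, whose coboundary after trivializing via $\phi$ is $a$.

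The repair is the ``unpleasant'' step the paper alludes to. One descends along the actual \v{C}ech nerve of the epimorphism $U_0\to\ms X$ (equivalently, one realizes $\ms X$ as the stack of $a$-twisted $A$-torsors attached to $(U_\bullet,a)$ and unwinds). A descent datum there consists of an isomorphism over the whole of $U_0\times_{\ms X}U_0$, and the $\ms X$-twisted condition on inertia says precisely that this isomorphism transforms under the $A$-action through $\chi$; hence it is determined by its restriction along the section cut out by $\phi$, and the strict cocycle condition over $U_0\times_{\ms X}U_0\times_{\ms X}U_0$ translates, after trivializing with $\phi$ and using $\delta\phi=a$, into $\delta\psi=\chi(a)$ on $U_2$. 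Without this intermediate comparison your argument establishes neither full faithfulness nor essential surjectivity; with it, the rest of what you wrote (functoriality in $T$, identification of $2$-morphisms) goes through as stated.
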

\begin{proof}[Sketch of proof]
  First, let $f:\ms X\to\ms S$ be an $\ms X$-twisted object of $\ms
  S$.  Let $\phi:U_0\to\ms X\to\ms S$ be the composition of $f$ with
  the map coming from the chosen trivialization of $\ms X$ over $U_0$.
  Via the cleavage on $\ms S$, the two maps $U_1\to U_0$ give an
  isomorphism  $\psi:(p^1_0)^{\ast}\phi\simto
    (p^1_1)^{\ast}\phi$ of the pullbacks.  The condition that $f$ be
    $\ms X$-twisted shows that the action of the coboundary is
    precisely multiplication by $a$, giving a $(U_\bullet,a)$-twisted
    object of $\ms S$.

    By descent theory, the statement that this gives an equivalence
    boils down to the proposition that a morphism $\ms X\to\ms S$ is
    equivalent to a natural transformation between fibered categories.
    (This requires some careful justification, which for example comes
    from the realization of $\ms X$ as the stack of $a$-twisted
    torsors with respect to the pair $(U_\bullet,a)$.  Since we will
    not use this formalism in this paper, we will not go into the
    rather unpleasant details.)
\end{proof}

\subsubsection{Pushing forward
  rigidifications}\label{sec:push-forw-rigid}

Let $\ms S\to X$ be a stack on $X$ with inertia stack $\ms I(\ms
S)\to\ms S$.  Suppose $A$ is an abelian sheaf on $X$ admitting a
central injection $A_{\ms S}\inj\ms I(\ms S)$.  In \cite{a-c-v},
Abramovich, Corti, and Vistoli construct the \emph{rigidification of
  $\ms S$ along $A$\/}, denoted $\ms S\thickslash A$ (using the
notation of section 5 of \cite{romagny}).  It is characterized by a
universal property: there is a 1-morphism $\ms S\to\ms S\thickslash A$
which is 1-universal among morphisms $\phi:\ms S\to\ms T$ for which
$A_{\ms S}$ is in the kernel of the induced map $\ms I(\ms
S)\to\phi^{\ast}\ms I(\ms T)$.  We will freely use the standard fact that
$\ms S\to\ms S\thickslash A$ is representable by $A$-gerbes.

\begin{remark}
  While all existing references discuss
  rigidifications only for algebraic stacks on the category of
  $S$-schemes for some scheme $S$, the abstract nonsense works
  perfectly well for stacks on any site.  We will implicitly use this
  in what follows.
\end{remark}

In this section we study the morphism $f_{\ast}\ms S\to f_{\ast}(\ms
S\thickslash A)$.  Given an $S$-space $T\to S$ and a $1$-morphism
$\gamma:T\to f_{\ast}(\ms S\thickslash A)$, there results an
$A_T$-gerbe on $X\times_S T$, coming from the fact that $\ms S\to\ms
S\thickslash A$ is represented by $A$-gerbes and the fact that $T\to
f_{\ast}(\ms S\thickslash A)$ corresponds to a morphism $X\times_S
T\to\ms S\thickslash A$.

\begin{defn}\label{def:gerbe}
  With the above notation, the $A$-gerbe associated to $\gamma$ will
  be denoted $\ms X_\gamma$ and called the \emph{gerbe of $\gamma$}.
  The class of $\ms X_\gamma$ in $\H^2(X\times_S T,A)$ will be called
  the \emph{(cohomology) class of $\gamma$}.
\end{defn}

\begin{prop}
  Let $\ms X\to X$ be an $A$-gerbe.  There is a canonical isomorphism
  $\ms S^{\ms X}\thickslash A\cong\ms S\thickslash A$.  Moreover, for
  any $T\to S$, a $1$-morphism $\gamma:T\to f_{\ast}(\ms S\thickslash
  A)$ lifts to a $1$-morphism $T\to f_{\ast}(\ms S^{\ms X_{\gamma}})$.
\end{prop}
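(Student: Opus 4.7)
The plan is to construct a natural morphism $\ms S^{\ms X}\to\ms S\thickslash A$, check that it factors through rigidification on the source to produce a canonical map $\Phi:\ms S^{\ms X}\thickslash A\to\ms S\thickslash A$, and then verify $\Phi$ is a $1$-isomorphism by working \'etale-locally on $X$ where $\ms X$ admits a section. For the lifting, I would obtain it by pulling back the universal $A$-gerbe $\pi:\ms S\to\ms S\thickslash A$ along $\gamma$.

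First I would define the morphism $\ms S^{\ms X}\to\ms S\thickslash A$. Let $f:\ms X\times_X T\to\ms S$ be an $\ms X$-twisted section over some $T\to X$. By its universal property the rigidification $\pi:\ms S\to\ms S\thickslash A$ kills $A_{\ms S}\subset\ms I(\ms S)$, so $\pi\circ f:\ms X\times_X T\to\ms S\thickslash A$ kills the subgroup $A\subset\ms I(\ms X\times_X T)$ (which by the twisted condition maps to $A_{\ms S}$ under $f$). Applying the universal property of rigidification once more, $\pi\circ f$ factors uniquely through the rigidification $(\ms X\times_X T)\thickslash A=T$, producing a section $T\to\ms S\thickslash A$. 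This construction is $2$-functorial in $T$ and visibly kills the $A$-action on $\ms S^{\ms X}$ induced from $A_{\ms S}\subset\ms I(\ms S)$, so by universality it factors through a canonical $1$-morphism
\[
\Phi:\ms S^{\ms X}\thickslash A\to\ms S\thickslash A.
\]

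Next I would check that $\Phi$ is a $1$-isomorphism by descent. Choose an \'etale cover $U\to X$ on which $\ms X$ admits a section $\sigma$. By Proposition \ref{P:transition-0}, $\sigma$ induces a canonical $1$-isomorphism $(\ms S^{\ms X})|_U\simto\ms S|_U$; a direct check (comparing the constructions) shows this $1$-isomorphism intertwines the respective maps to $(\ms S\thickslash A)|_U$. Hence $\Phi|_U$ is a $1$-isomorphism, and since the formation of both rigidification and $\Phi$ is compatible with \'etale base change, $\Phi$ itself is a $1$-isomorphism.

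For the lifting, I would use that $\pi$ is representable by $A$-gerbes. A morphism $\gamma:T\to f_\ast(\ms S\thickslash A)$ corresponds to a $1$-morphism $g:X\times_S T\to\ms S\thickslash A$, and $\ms X_\gamma$ is by Definition \ref{def:gerbe} the pullback $A$-gerbe $g^\ast\ms S$. The tautological Cartesian square
\[
\xymatrix{\ms X_\gamma\ar[r]^{\tilde g}\ar[d] & \ms S\ar[d]^{\pi}\\
X\times_S T\ar[r]_{g} & \ms S\thickslash A}
\]
equips $\tilde g$ with a canonical identification of $A\subset\ms I(\ms X_\gamma)$ with $\tilde g^{\ast}A_{\ms S}$; this is precisely the condition that $\tilde g$ be a $\ms X_\gamma$-twisted section of $\ms S$ over $X\times_S T$, equivalently, a $1$-morphism $T\to f_\ast(\ms S^{\ms X_\gamma})$. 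Under $\Phi$ (applied with $X$ replaced by $X\times_S T$ and $\ms X$ replaced by $\ms X_\gamma$), this section projects to $g$, that is, to $\gamma$.

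The main obstacle is keeping track of how the $A$-inertias of all the various stacks are identified, so that the universal properties of rigidification can genuinely be invoked; once these compatibilities are spelled out, the formal framework of rigidifications makes each step essentially automatic, and Proposition \ref{P:transition-0} supplies the only geometric content needed (that $\ms X$ is locally trivial).
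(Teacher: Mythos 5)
Your proof is correct and follows essentially the same route as the paper: construct $\ms S^{\ms X}\to\ms S\thickslash A$ by composing a twisted section with $\ms S\to\ms S\thickslash A$ and using the twisted condition to descend through $\ms X$, pass to rigidifications by the universal property, and verify the result is an isomorphism \'etale-locally where $\ms X$ is trivial via Proposition \ref{P:transition-0}. Your treatment of the lifting clause (pulling back the $A$-gerbe $\ms S\to\ms S\thickslash A$ along $\gamma$ and observing the top arrow of the Cartesian square is $\ms X_\gamma$-twisted) is the intended argument, which the paper's proof leaves implicit.
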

\begin{proof}
  Consider the following diagram
\begin{equation}\label{E:diag}
\xymatrix{
\ms S^{\ms X}\ar[r]\ar@{-->}[d] & \uhom_X(\ms X,\ms S)\ar[d]\\
\uhom_X(X,\ms S\thickslash A)\ar@{=}[d]\ar[r] & \uhom_X(\ms X,\ms S\thickslash
A)\\
\ms S\thickslash A.}
\end{equation}
The $\ms X$-twisted condition shows that the dashed arrow exists, and
since every arrow in question is a monomorphism, the dashed arrow is
unique up to $2$-isomorphism.  By the universal property of
$\thickslash$, there results a natural morphism $\nu:\ms S^{\ms
  X}\thickslash A\to\ms S\thickslash A$.

To show that $\nu$ is an equivalence, we may work locally on $X$ and
assume that $\ms X$ is trivial.  In this case, Proposition \ref{P:transition-0}
shows that the dashed arrow in (\ref{E:diag}) is the image of a
$1$-isomorphism 
$\ms S\simto\ms S^{\ms X}$ which respects the $A$-structures.  It
follows that the map on rigidifications is an isomorphism, as desired.
\end{proof}

\begin{defn}
  Given an $A$-gerbe $\ms X\to X$, let $f^{\ms X}_{\ast}(\ms
  S\thickslash A)$ denote the stack-theoretic image of $f_{\ast}(\ms
  S^{\ms X})$ under the natural map $f_{\ast}(\ms S^{\ms X})\to
  f_{\ast}(\ms S\thickslash A)$.  We will call $f^{\ms X}_{\ast}(\ms
  S\thickslash A)$ the \emph{$\ms X$-twisted part of $f_{\ast}(\ms
    S\thickslash A)$\/}.
\end{defn}

\begin{lem}\label{L:pushforwardcovering}
  Given an $A$-gerbe $\ms X\to X$ and a $1$-morphism $\phi:T\to f^{\ms
    X}_{\ast}(\ms S\thickslash A)$, there is an \'etale surjection $U\to
  T$ and an isomorphism $T\times_{f^{\ms X}_{\ast}(\ms A\thickslash
    A)}f_{\ast}(\ms S^{\ms X})|_U\cong f_{\ast}\B{A}|_U$.
\end{lem}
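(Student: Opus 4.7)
The plan is to use the chosen lift as a trivialization of the relevant $A$-gerbe of liftings. Since $f^{\ms X}_{\ast}(\ms S\thickslash A)$ is by definition the stack-theoretic image of $f_{\ast}(\ms S^{\ms X})\to f_{\ast}(\ms S\thickslash A)$, the morphism $\phi$ factors étale-locally on $T$ through $f_{\ast}(\ms S^{\ms X})$. Choose an étale surjection $U\to T$ together with a lift $\tilde\phi:U\to f_{\ast}(\ms S^{\ms X})$; unwinding the definitions, $\tilde\phi$ is the datum of an $\ms X$-twisted $1$-morphism $\ms X\times_X X_U\to\ms S$ whose rigidification recovers the map $X_U\to\ms S\thickslash A$ corresponding to $\phi|_U$. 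After base change along $U\to T$, the statement reduces to producing an equivalence
\begin{equation*}
U\times_{f^{\ms X}_{\ast}(\ms S\thickslash A)}f_{\ast}(\ms S^{\ms X})\simeq f_{\ast}\B{A}|_U
\end{equation*}
sending the canonical section produced by $\tilde\phi$ to the trivial $A$-torsor.

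Next I would unwind the left-hand side. A section over $V\to U$ consists of an $\ms X$-twisted morphism $\tilde\psi:\ms X\times_X X_V\to\ms S$ together with a $2$-isomorphism of its rigidification with $\phi|_V$; equivalently, it is a section over $X_V$ of the $A$-gerbe $\ms G_V$ obtained by pulling back the representable $A$-gerbe $\ms S^{\ms X}\to\ms S^{\ms X}\thickslash A\cong\ms S\thickslash A$ along the map $X_V\to\ms S\thickslash A$ determined by $\phi|_V$. The restriction $\tilde\phi|_V$ is a distinguished section of $\ms G_V$, so $\ms G_V$ is a pointed $A$-gerbe over $X_V$ and is therefore equivalent to $\B{A}$ there: the standard equivalence sends a section $s$ to the $A$-torsor of isomorphisms $s\simeq\tilde\phi|_V$ in $\ms G_V$ (compatibly with the projection to $\ms S\thickslash A$).

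Pushing forward to $V$, sections of $\ms G_V$ over $X_V$ are thus naturally equivalent to $A$-torsors on $X_V$, which is exactly the groupoid of sections of $f_{\ast}\B{A}$ over $V$; naturality in $V\to U$ promotes this to the desired equivalence of stacks. The one genuinely delicate point is the functoriality of the trivialization under base change $V'\to V$ and its compatibility with the $\ms X$-twisting data; this reduces to the observation, built into the definition of $\ms S^{\ms X}$, that the $A$-action on lifts arises from the central inclusion $A\inj\ms I(\ms S)$ via the identification $A\cong\ms I(\ms X)$, at which point the pointwise equivalence of groupoids extends to the claimed equivalence of stacks.
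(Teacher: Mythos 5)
Your argument is correct and follows the same route as the paper: use the epimorphism $f_{\ast}(\ms S^{\ms X})\to f^{\ms X}_{\ast}(\ms S\thickslash A)$ to lift $\phi$ \'etale-locally, observe that the lift trivializes the pulled-back $A$-gerbe $X_V\times_{\ms S\thickslash A}\ms S^{\ms X}$, and conclude by compatibility of fiber products with pushforward. You have merely spelled out in more detail the identification of sections of the fiber product with sections of that gerbe, which the paper leaves implicit.
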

\begin{proof}
  By construction $f_{\ast}(\ms S^{\ms X})\to f^{\ms X}_{\ast}(\ms
  S\thickslash A)$ is an epimorphism of stacks, so there is some $U\to
  T$ such that $\phi|_U$ lifts into $f_{\ast}(\ms S^{\ms X})$.  Thus,
  it suffices to show that if $\phi$ lifts to $f_{\ast}(\ms S^{\ms
    X})$ then the fiber product is isomorphic to $f_{\ast}\B{A}$.  In
  this case the gerbe $\ms X_{\gamma}\to X\times_S T$ is isomorphic to
  $\B{A}$.  The result follows from the compatibility of the formation
  of fiber product with pushforward.
\end{proof}

\subsubsection{Deformation theory}
\label{sec:defmn-thry}

In this section we assume that $f$ is a proper morphism of finite
presentation between algebraic spaces and $A$ a tame constructible
abelian \'etale sheaf.

\emph{We assume throughout this section that $f_{\ast}A$ and
  $\R^1f_{\ast}A$ are finite \'etale over $S$\/}.  (It is known that
they are both constructible; if $f$ is smooth and $A$ is the pullback
of a finite \'etale group scheme then this hypothesis will be
satisfied.  This will be the case in applications of interest to us.)

\begin{lem}\label{L:BA-structure}
  There is a natural morphism $f_{\ast}\B{A}\to\R^1f_{\ast}A$ which
  realizes $f_{\ast}\B{A}$ as a $f_{\ast}A$-gerbe over
  $\R^1f_{\ast}A$.
\end{lem}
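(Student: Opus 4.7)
The plan is to exhibit the morphism as the one sending a torsor to its cohomology class, then to verify the three standard gerbe axioms, using the hypotheses on $f_\ast A$ and $\R^1 f_\ast A$ only to guarantee that everything is suitably algebraic.

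First I would construct the morphism. For $T\to S$ an object of $S_{\ETALE}$, an object of $(f_\ast\B{A})(T)$ is an $A_{X\times_S T}$-torsor $P$ on $X\times_S T$. Its isomorphism class is an element of $\H^1(X\times_S T,A)$, and composing with the canonical sheafification map $\H^1(X\times_S T,A)\to(\R^1 f_\ast A)(T)$ yields a map of presheaves on $S_{\ETALE}$. This map is manifestly compatible with pullback, and sheafifying (on the target, which is already a sheaf) gives the desired morphism $\pi:f_\ast\B{A}\to\R^1 f_\ast A$, where the codomain is regarded as a stack with trivial inertia.

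Next I would identify the band. For any $A$-torsor $P$ on $X\times_S T$, the sheaf $\saut(P)$ is canonically $A|_{X\times_S T}$ (since $A$ is abelian). Pushing forward, the sheaf of automorphisms of the corresponding section of $f_\ast\B{A}$ over $T$ is $(f_\ast A)|_T$. Since $\pi$ factors every such automorphism to the identity (the target has trivial inertia), the relative inertia of $\pi$ along any section is $f_\ast A$, which is the correct candidate band.

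Finally I would verify the two gerbe conditions. For local essential surjectivity, let $\xi\in(\R^1 f_\ast A)(T)$; by the definition of $\R^1 f_\ast A$ as the sheafification of $T\mapsto\H^1(X\times_S T,A)$, there is an étale cover $T'\to T$ over which $\xi|_{T'}$ lifts to a genuine cohomology class, and hence to an $A$-torsor on $X\times_S T'$, i.e.\ to a section of $f_\ast\B{A}$. For local transitivity, suppose $P,Q\in(f_\ast\B{A})(T)$ satisfy $\pi(P)=\pi(Q)$; then $P\wedge^A Q^{-1}$ is an $A$-torsor on $X\times_S T$ whose cohomology class vanishes in $(\R^1 f_\ast A)(T)$, so again by sheafification it becomes trivial after an étale cover $T'\to T$, which means $P|_{T'}\cong Q|_{T'}$. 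Together with the band computation, this is exactly what it means for $\pi$ to be an $f_\ast A$-gerbe.

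The only place where the hypotheses really enter is in making sense of the statement: the tameness and finite étaleness of $f_\ast A$ and $\R^1 f_\ast A$ guarantee that both the band and the base of the gerbe are bona fide finite étale $S$-group schemes (resp.\ $S$-schemes), so that ``$f_\ast A$-gerbe over $\R^1 f_\ast A$'' has its expected algebraic meaning. The verifications above are then essentially formal, the only mildly subtle point being the careful tracking of sheafifications, which is where I would expect the main (minor) bookkeeping effort to lie.
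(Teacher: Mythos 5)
Your proposal is correct and follows essentially the same route as the paper: identify $\R^1f_{\ast}A$ as the sheafification of the presheaf of isomorphism classes of $f_{\ast}\B{A}$, deduce the gerbe axioms from the definition of sheafification, and identify the band $f_{\ast}A$ via the inertia of $\B{A}$. The paper states the gerbe verification more tersely ("it immediately follows") and, as you correctly observe, the finite-étaleness hypotheses play no role in this particular lemma.
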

\begin{proof}
  Recall that $\R^1f_{\ast}A$ is defined as the
  sheafification of the functor $(T\to S)\mapsto\H^1(X\times_S T,A)$
  on the big \'etale site of $S$.  A section of $f_{\ast}\B{A}$ over
  $T\to S$ corresponds to an $A$-torsor $\ms T$ on $X\times_S T$.  In
  fact, the set of isomorphism classes of objects of $f_{\ast}\B{A}$
  over $T$ is naturally isomorphic to $\H^1(X\times_S T,A)$.  Thus,
  $\R^1f_{\ast}A$ is the sheafification of the stack $f_{\ast}\B{A}$,
  from which it immediately follows that
  $f_{\ast}\B{A}\to\R^1f_{\ast}A$ is a gerbe.  It remains to identify
  the inertia stack $\ms I(f_{\ast}\B{A})$.  But the inertia stack of
  $\B{A}$ is naturally identified with $A$, from which it follows that
  there is a natural isomorphism $f_{\ast}A\simto\ms
  I(f_{\ast}\B{A})$.
\end{proof}

\begin{cor}\label{C:no-cotgt-cplx}
  The cotangent complex of $f_{\ast}\B{A}$ over $S$ is trivial.
\end{cor}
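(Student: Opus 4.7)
The plan is to read off the result from Lemma \ref{L:BA-structure} by using the transitivity triangle for the factorization
\[
f_{\ast}\B{A}\longrightarrow\R^{1}f_{\ast}A\longrightarrow S,
\]
and to show that each of the two maps has trivial cotangent complex.

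First, by hypothesis $\R^{1}f_{\ast}A$ is finite \'etale over $S$, so $L_{\R^{1}f_{\ast}A/S}=0$. Hence it suffices to show that $L_{f_{\ast}\B{A}/\R^{1}f_{\ast}A}=0$, since the transitivity triangle
\[
L_{\R^{1}f_{\ast}A/S}\otimes\ms O_{f_{\ast}\B{A}}\longrightarrow L_{f_{\ast}\B{A}/S}\longrightarrow L_{f_{\ast}\B{A}/\R^{1}f_{\ast}A}
\]
will then force $L_{f_{\ast}\B{A}/S}=0$.

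For the second vanishing I would use that $f_{\ast}\B{A}\to\R^{1}f_{\ast}A$ is a gerbe banded by the finite \'etale group $f_{\ast}A$ (Lemma \ref{L:BA-structure}). Since the cotangent complex is \'etale-local on source and target, and since any $f_{\ast}A$-gerbe trivializes after an \'etale cover of the base, it is enough to check the vanishing for $\B{G}\to\spec R$, where $G$ is a finite \'etale group scheme over $R$. For such $G$ the atlas $\spec R\to\B{G}$ is a $G$-torsor, hence \'etale, so $\B{G}$ is an \'etale Deligne--Mumford stack over $\spec R$ and its cotangent complex vanishes. (Equivalently, descent through the atlas identifies $L_{\B{G}/\spec R}$ with the shifted Lie complex of $G$, which is zero when $G$ is \'etale.) The main potential friction is justifying this \'etale-local reduction for cotangent complexes of morphisms of stacks; I would appeal to the standard compatibility of $L$ with smooth (in particular \'etale) pullback on the target, combined with the fact that $f_{\ast}\B{A}\to\R^{1}f_{\ast}A$ is representable, so that its cotangent complex is indeed controlled by its \'etale-local structure.

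Assembling the two vanishings via the distinguished triangle above yields $L_{f_{\ast}\B{A}/S}=0$, as desired.
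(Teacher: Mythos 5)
Your argument is correct and essentially identical to the paper's: both use the transitivity triangle together with the \'etaleness of $\R^1f_{\ast}A$ over $S$ to reduce to showing that a gerbe banded by a finite \'etale group scheme has trivial cotangent complex over its base, and both deduce this from the existence of an atlas that is \'etale over both the gerbe and the base. One small inaccuracy: the morphism $f_{\ast}\B{A}\to\R^1f_{\ast}A$ is \emph{not} representable (a gerbe with nontrivial band has nontrivial inertia), but this parenthetical is not needed for your reduction --- \'etale-localness of the cotangent complex on the target plus the \'etale atlas argument you give already suffice.
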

\begin{proof}
  By the usual triangles and the fact that $\R^1f_{\ast}A$ is \'etale
  over $S$, it suffices to show that if $\Gamma$ is a finite \'etale
  group scheme and $\ms Y\to Y$ is a $\Gamma$-gerbe then the cotangent
  complex of $\ms Y$ over $Y$ is trivial.  But this follows
  immediately from the fact that $\ms Y$ there is a surjection
  $U\to\ms Y$ such that $U$ is \'etale over both $\ms Y$ and $Y$.
\end{proof}

\begin{cor}
  The natural map $\chi:f_{\ast}(\ms S^{\ms X})\thickslash
  f_{\ast}A\to f_{\ast}^{\ms X}(\ms S\thickslash A)$ is representable
  by finite \'etale covers.
\end{cor}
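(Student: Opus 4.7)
The plan is to check representability étale-locally on the target, by lifting through the epimorphism $f_{\ast}(\ms S^{\ms X})\to f_{\ast}^{\ms X}(\ms S\thickslash A)$ and then combining Lemma \ref{L:pushforwardcovering} with Lemma \ref{L:BA-structure} to identify the fiber with $\R^{1}f_{\ast}A$, which is finite \'etale by hypothesis.

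More precisely, fix a $1$-morphism $\phi:T\to f_{\ast}^{\ms X}(\ms S\thickslash A)$ from an algebraic space $T$, and abbreviate $\ms F=f_{\ast}(\ms S^{\ms X})$, $\ms G=f_{\ast}^{\ms X}(\ms S\thickslash A)$, and $B=f_{\ast}A$. I need to show that the fiber product $\ms Z:=T\times_{\ms G}(\ms F\thickslash B)$ is finite \'etale over $T$. By the definition of stack-theoretic image, $\ms F\to\ms G$ is an epimorphism, so there is an \'etale cover $U\to T$ over which $\phi|_{U}$ lifts to a $1$-morphism $\widetilde\phi:U\to\ms F$. Lemma \ref{L:pushforwardcovering} then gives
$$U\times_{\ms G}\ms F \;\simeq\; (f_{\ast}\B{A})|_{U}.$$

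Now $\ms F\to\ms F\thickslash B$ is a $B$-gerbe, and base change preserves gerbes, so the induced map $U\times_{\ms G}\ms F\to \ms Z|_{U}$ is also a $B$-gerbe. By the universal property of rigidification, this forces
$$\ms Z|_{U}\;\simeq\;(U\times_{\ms G}\ms F)\thickslash B\;\simeq\;(f_{\ast}\B{A})|_{U}\thickslash B\;\simeq\;(\R^{1}f_{\ast}A)|_{U},$$
where the last isomorphism is Lemma \ref{L:BA-structure}. By the standing hypothesis, $\R^{1}f_{\ast}A$ is finite \'etale over $S$, hence $\ms Z|_{U}$ is finite \'etale over $U$. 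Since the property of being finite \'etale is \'etale-local on the target, it descends along $U\to T$, showing that $\ms Z\to T$ is finite \'etale and therefore that $\chi$ is representable by finite \'etale covers.

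The only non-formal step is the identification $U\times_{\ms G}(\ms F\thickslash B)\simeq(U\times_{\ms G}\ms F)\thickslash B$, i.e., that base-changing the rigidification $B$-gerbe along $U\to\ms G$ yields the rigidification of the base-changed gerbe. This is a direct consequence of the universal characterization of $\thickslash$ together with the preservation of gerbes under base change, but it merits explicit verification since \cite{a-c-v} sets up $\thickslash$ in the restricted setting of stacks on scheme sites rather than on arbitrary sites (as noted in the earlier remark on rigidifications).
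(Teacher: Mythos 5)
Your proof is correct and follows essentially the same route as the paper: both identify the fiber of $\chi$ \'etale-locally with $f_{\ast}\B{A}\thickslash f_{\ast}A\cong\R^1f_{\ast}A$ via Lemma \ref{L:pushforwardcovering} and Lemma \ref{L:BA-structure}, then invoke the standing hypothesis that $\R^1f_{\ast}A$ is finite \'etale. You merely make explicit the base-change compatibility of rigidification that the paper leaves implicit, which is a reasonable thing to flag.
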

\begin{proof}
  By Lemma \ref{L:pushforwardcovering}, the fiber of $\chi$ is locally
  $f_{\ast}\B{A}\thickslash f_{\ast}A$.  Applying Lemma \ref{L:BA-structure}
  shows that this is precisely $\R^1f_{\ast}A$.
\end{proof}

\begin{lem}\label{P:alg alg stack stack}  If $f:\ms S\to\ms S'$ is 
  a map of $S$-stacks which is representable by fppf morphisms of
  algebraic stacks then $\ms S$ is algebraic if and only if $\ms S'$
  is.
\end{lem}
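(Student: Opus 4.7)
The lemma asserts that algebraicity transfers in both directions along a map representable by fppf morphisms of algebraic stacks, and the proof splits into two implications handled by different techniques.

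For the implication $\ms S'$ algebraic $\Rightarrow$ $\ms S$ algebraic, the plan is to pull back a smooth atlas. Pick a smooth surjection $U\to\ms S'$ by a scheme. Since $f$ is representable by algebraic stacks, $\ms S_U := \ms S\times_{\ms S'}U$ is an algebraic stack, hence admits a smooth surjection $V\to\ms S_U$ from a scheme. The composition $V\to\ms S_U\to\ms S$ is smooth (the second arrow is the base change of the smooth $U\to\ms S'$) and surjective, yielding a smooth atlas for $\ms S$. The diagonal is then handled by the factorization
\[
\ms S\xto{\Delta_{\ms S/\ms S'}}\ms S\times_{\ms S'}\ms S\longrightarrow \ms S\times_S\ms S,
\]
whose first arrow is representable because $f$ is representable, and whose second arrow is a base change of $\Delta_{\ms S'}$, which is representable by the algebraicity of $\ms S'$.

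For the implication $\ms S$ algebraic $\Rightarrow$ $\ms S'$ algebraic, I would apply fppf descent of algebraicity. Choose a smooth atlas $U\to\ms S$; composing with $f$ gives a morphism $U\to\ms S'$ whose pullback to any scheme $T\to\ms S'$ is an algebraic stack fppf over $T$, hence (after a further atlas step) furnishes a representable fppf cover of $\ms S'$ by a scheme. There are then two tasks. First, one verifies the diagonal representability of $\ms S'$: given $\alpha,\beta\colon T\to\ms S'$ from a scheme, the sheaf $\isom_T(\alpha,\beta)$ becomes representable by an algebraic space after pullback to the fppf cover of $T$ coming from the algebraicity of $\ms S$, whence fppf descent for algebraic spaces implies $\isom_T(\alpha,\beta)$ itself is an algebraic space. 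Second, one upgrades the fppf atlas to a smooth one via the standard slicing lemma for fppf morphisms: any fppf morphism from a scheme to a stack admits, smooth-locally on the target, a refinement by a smooth morphism.

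The main obstacle is the converse direction, which rests on nontrivial but standard fppf descent machinery---the effectivity of fppf descent for isom-sheaves of algebraic spaces, and the slicing lemma converting an fppf atlas into a smooth atlas. Once these ingredients are invoked, the two directions fit together to give the equivalence.
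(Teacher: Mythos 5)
Your forward direction ($\ms S'$ algebraic $\Rightarrow$ $\ms S$ algebraic) is fine and is in fact more elementary than the paper's treatment, which handles both directions symmetrically; the only imprecision is that $\Delta_{\ms S/\ms S'}$ is representable by algebraic spaces not ``because $f$ is representable'' (it is representable only by algebraic \emph{stacks}) but because each fiber $\ms S\times_{\ms S'}T$ is an algebraic stack and hence has representable diagonal over $T$.

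The converse direction has a genuine gap at the diagonal step. You assert that $\isom_T(\alpha,\beta)$ ``becomes representable by an algebraic space after pullback to the fppf cover of $T$ coming from the algebraicity of $\ms S$,'' but pulling back the Isom-sheaf along $W\to T$ just gives $\isom_W(\alpha_W,\beta_W)$, which is no more obviously an algebraic space than the original; the only thing algebraicity of $\ms S$ hands you directly is $\isom_W(\widetilde\alpha,\widetilde\beta)$ for chosen lifts $\widetilde\alpha,\widetilde\beta:W\to\ms S$, and the natural map $\isom_W(\widetilde\alpha,\widetilde\beta)\to\isom_W(\alpha_W,\beta_W)$ is \emph{not} surjective in general. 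For instance, with $\ms S=X$ a scheme and $\ms S'=[X/G]$, two lifts $\widetilde\alpha,\widetilde\beta:W\to X$ lying in the same $G$-orbit but not equal give $\isom_W(\widetilde\alpha,\widetilde\beta)=\emptyset$ while $\isom_W(\alpha_W,\beta_W)\neq\emptyset$. So you cannot descend along this comparison. The paper's fix is to avoid fixing lifts of $\alpha$ and $\beta$ separately: given $T'\to\ms S'\times\ms S'$, one takes a smooth cover $T$ of the \emph{entire} algebraic stack $\ms S\times\ms S\times_{\ms S'\times\ms S'}T'$ and sets $I=T\times_{\ms S\times\ms S}\ms S$, $I'=T'\times_{\ms S'\times\ms S'}\ms S'$. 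The map $I\to I'$ is then relatively representable by fppf morphisms of algebraic spaces \emph{and surjective} --- given $\psi:\alpha\to\beta$ and any local lift $a$ of $\alpha$, the pair $(a,a)$ with the second identification twisted by $\psi$ is a point of the fiber product over which $\id_a$ lifts $\psi$ --- so Artin's theorem \cite[10.1]{l-mb} applies to transfer algebraicity of $I$ to $I'$. A related ordering issue affects your atlas step: the ``further atlas step'' produces a scheme fppf over each $T\to\ms S'$ separately, not a single representable fppf cover of $\ms S'$; the clean route is to establish representability of $\Delta_{\ms S'}$ first, after which the composite $U\to\ms S\to\ms S'$ is automatically representable by algebraic spaces and fppf, and Artin's theorem upgrades it to a smooth atlas.
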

\begin{proof} First, we show that the diagonal of $\ms S$ is
  separated, quasi-compact, and representable by algebraic spaces if
  and only if the same is true for $\ms S'$.  To this end, let
  $T'\to\ms S'\times\ms S'$ be a morphism with $T'$ an affine scheme.
  Consider the diagram

$$\xymatrix{& \ms S \ar[rr]\ar'[d][dd] & & \ms S\times\ms S\ar[dd] \\
  I \ar[ur]\ar[rr]\ar[dd] & & T \ar[ur]\ar[dd] \\
  & \ms S' \ar'[r][rr] & & \ms S'\times\ms S' \\
  I'\ar[rr]\ar[ur] & & T' \ar[ur] }$$ whose terms we now explain.  The
sheaf $I'$ is the pullback of $T'$ along the diagonal.  By assumption,
the fiber product $\ms S\times\ms S\times_{\ms S'\times \ms S'}T'$ is
an algebraic stack over $T'$ with fppf structure morphism.  Thus, we
may let $T$ be a scheme which gives a smooth cover, and then we let
$I$ be the pullback sheaf of $T$ along the diagonal of $\ms S$.  We
see that $I\to I'$ is relatively representable by fppf morphisms of
algebraic spaces.  By a result of Artin \cite[10.1]{l-mb}, $I$ is an
algebraic space if and only if $I'$ is.

It remains to show that $\ms S$ has a smooth cover by an algebraic
space if and only if $\ms S'$ does.  In fact, it suffices to replace
the word ``smooth'' by ``fppf,'' by Artin's theorem
[\textit{ibid\/}.].  But then the statement is clear.
\end{proof}

\begin{prop}\label{P:stacky-means-stacky}
  Given an $X$-stack $\ms S$ and an $A$-gerbe $\ms X\to X$, the stack
  $f_{\ast}(\ms S^{\ms X})$ is an Artin (resp.\ DM) stack if and only
  if the stack $f_{\ast}^{\ms X}(\ms S\thickslash A)$ is an Artin
  (resp.\ DM) stack.
\end{prop}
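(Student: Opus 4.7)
The strategy is to exhibit the natural map $\pi\colon f_{\ast}(\ms S^{\ms X})\to f_{\ast}^{\ms X}(\ms S\thickslash A)$ as representable by fppf (indeed finite \'etale) morphisms of algebraic stacks, and then appeal to Lemma \ref{P:alg alg stack stack}. By construction of the stack-theoretic image, $\pi$ is an epimorphism, so representability of $\pi$ and its fibers is a local question on the target.

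To describe the local structure of $\pi$, I would apply Lemma \ref{L:pushforwardcovering}: given any $T\to f_{\ast}^{\ms X}(\ms S\thickslash A)$, there is an \'etale cover $U\to T$ such that the pullback of $\pi$ along $U\to f_{\ast}^{\ms X}(\ms S\thickslash A)$ is isomorphic to $f_{\ast}\B{A}\times_S U\to U$. By Lemma \ref{L:BA-structure}, $f_{\ast}\B{A}$ is an $f_{\ast}A$-gerbe over $\R^1 f_{\ast}A$; the running assumption that both $f_{\ast}A$ and $\R^1 f_{\ast}A$ are finite \'etale over $S$ then exhibits $f_{\ast}\B{A}$ as a finite \'etale DM algebraic stack over $S$. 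Consequently, $\pi$ is representable by finite \'etale morphisms of Deligne--Mumford stacks; in particular, it is representable by fppf morphisms of algebraic stacks.

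Lemma \ref{P:alg alg stack stack} now immediately gives the Artin half of the statement: $f_{\ast}(\ms S^{\ms X})$ is algebraic if and only if $f_{\ast}^{\ms X}(\ms S\thickslash A)$ is. For the Deligne--Mumford refinement, since $\pi$ is representable by DM stacks it has unramified relative diagonal, so the diagonal of $f_{\ast}(\ms S^{\ms X})$ over $S$ is unramified if and only if that of $f_{\ast}^{\ms X}(\ms S\thickslash A)$ is; combined with the algebraicity equivalence this yields the ``DM iff DM'' statement. No serious obstacle arises---the proposition is essentially a corollary of the descent-theoretic Lemma \ref{P:alg alg stack stack} together with the local picture provided by Lemma \ref{L:pushforwardcovering}. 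The only point requiring attention is that the finite \'etale hypotheses on $f_{\ast}A$ and $\R^1 f_{\ast}A$ are precisely what makes $f_{\ast}\B{A}$ (and hence the fibers of $\pi$) algebraic in the first place.
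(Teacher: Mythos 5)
Your proof is correct and follows the same route as the paper, which simply cites Lemma \ref{L:pushforwardcovering}, Lemma \ref{L:BA-structure}, and Lemma \ref{P:alg alg stack stack}; your write-up just unpacks that citation (and supplies the unramified-diagonal remark needed for the DM refinement, which the paper leaves implicit).
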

\begin{proof}
  This follows immediately from Lemma \ref{L:pushforwardcovering},
  Lemma \ref{L:BA-structure}, and Lemma \ref{P:alg alg stack stack}.
\end{proof}

\begin{para}
  We apply the above considerations to give a relation between certain
  virtual fundamental classes on $f_{\ast}(\ms S^{\ms X})$ and
  $f_{\ast}^{\ms X}(\ms S\thickslash A)$.
\end{para}
\begin{prop}\label{P:virt-fund}
  Let $\xi:\ms Z\to\ms W$ be a map of $S$-stacks.  Suppose there is a
  tame finite \'etale group scheme $G\to S$ and a central injection
  $G_{\ms Z}\inj\ms I(\ms Z)$ such that
  \begin{enumerate}
  \item the map $G_{\ms Z}\to\ms I(\ms Z)\to\xi^{\ast}\ms I(\ms W)$ is
    trivial;
  \item the induced $1$-morphism $\ms Z\thickslash G\to\ms W$ is
    representable by finite \'etale morphisms of degree invertible on
    $\ms W$.
  \end{enumerate}
  Given a perfect complex $\ms E\in\D(\ms W)$ and a map $\xi^{\ast}\ms
  E\to\LL_{\ms Z/S}$ which gives a perfect obstruction theory, there
  is a map $\ms E\to\LL_{\ms W/S}$ giving a perfect obstruction
  theory.
\end{prop}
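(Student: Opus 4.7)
The plan is to use that $\xi$ has trivial cotangent complex, so $\xi^{\ast}\LL_{\ms W/S}\simto\LL_{\ms Z/S}$, and then descend the given map along the factorization of $\xi$ arising from rigidification. By condition (1), the map $\xi$ factors uniquely as
\[
\ms Z\xrightarrow{p}\ms Z\thickslash G\xrightarrow{q}\ms W,
\]
where $p$ is a $G$-gerbe; by condition (2), $q$ is representable by a finite \'etale morphism of degree $d$ invertible on $\ms W$. Corollary \ref{C:no-cotgt-cplx} gives $\LL_{p}=0$, \'etaleness gives $\LL_{q}=0$, and the transitivity triangles then yield $\LL_{\xi}=0$. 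The transitivity triangle $\xi^{\ast}\LL_{\ms W/S}\to\LL_{\ms Z/S}\to\LL_{\xi}$ therefore produces an isomorphism $\xi^{\ast}\LL_{\ms W/S}\simto\LL_{\ms Z/S}$, and the given obstruction theory becomes a morphism $\alpha:\xi^{\ast}\ms E\to\xi^{\ast}\LL_{\ms W/S}$.

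Next I descend $\alpha$ in two stages. Both $\xi^{\ast}\ms E$ and $\xi^{\ast}\LL_{\ms W/S}$ are pulled back from $\ms Z\thickslash G$, hence the central $G$-inertia on $\ms Z$ acts trivially on each; any morphism between such objects is automatically $G$-equivariant, so by fully-faithfulness of pullback along the tame $G$-gerbe $p$ onto the trivially-acting subcategory, $\alpha$ descends uniquely to $\alpha_{1}:q^{\ast}\ms E\to q^{\ast}\LL_{\ms W/S}$ on $\ms Z\thickslash G$. Since $d$ is invertible on $\ms W$, the normalized trace $(1/d)\Tr$ splits the unit $\O_{\ms W}\inj q_{\ast}\O_{\ms Z\thickslash G}$, giving a direct sum decomposition $q_{\ast}\O_{\ms Z\thickslash G}=\O_{\ms W}\oplus\ms A'$; the projection formula and adjunction then yield
\[
\hom_{\ms Z\thickslash G}(q^{\ast}\ms E,q^{\ast}\LL_{\ms W/S})=\hom_{\ms W}(\ms E,\LL_{\ms W/S})\oplus\hom_{\ms W}(\ms E,\LL_{\ms W/S}\otimes\ms A').
\]
Define $\bar\alpha:\ms E\to\LL_{\ms W/S}$ to be the first component of $\alpha_{1}$ under this decomposition.

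It remains to check that $\bar\alpha$ is a perfect obstruction theory: perfectness of $\ms E$ is given, so only the conditions that $h^{0}(\bar\alpha)$ be an isomorphism and $h^{-1}(\bar\alpha)$ a surjection need verification. Since $\xi$ is faithfully flat (a tame gerbe composed with a finite \'etale cover) and $h^{i}$ commutes with flat pullback, these conditions are equivalent to the analogous conditions on $\xi^{\ast}\bar\alpha$. By construction, $\xi^{\ast}\bar\alpha$ and $\alpha$ correspond via adjunction to $(\bar\alpha,0)$ and $(\bar\alpha,\alpha'_{1})$ in the decomposition above, so they differ only by the image of $\alpha'_{1}\in\hom_{\ms W}(\ms E,\LL_{\ms W/S}\otimes\ms A')$; the remaining task is to argue that this ``trace-less'' summand cannot interfere with $h^{0}$ or $h^{-1}$, transferring the obstruction-theory property from $\alpha$ to $\bar\alpha$.

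The main obstacle is precisely this last cohomological comparison: one must control the summand $\alpha'_{1}$ and cleanly confirm that its contribution vanishes in the relevant degrees, using that $\ms A'$ is a finite \'etale $\O_{\ms W}$-algebra of trivial trace. If the direct bookkeeping proves awkward, a more abstract reformulation would work: the map $\alpha$, equipped with its canonical $G$-equivariance and the cocycle coming from trace-averaging along $q$, descends uniquely to $\bar\alpha$ by faithfully flat descent in the derived category, after which the obstruction-theory conditions on $\ms W$ follow tautologically from faithful flatness of $\xi^{\ast}$.
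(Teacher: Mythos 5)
Your proposal follows the paper's proof step for step through its first two stages: the factorization $\ms Z\to\ms Z\thickslash G\to\ms W$ with vanishing relative cotangent complexes, the descent of $\alpha$ across the tame gerbe using triviality of the $G$-action on $\xi^{\ast}\ms E$ and on $\LL_{\ms Z/S}$, and the definition of $\bar\alpha$ via the normalized (splitting) trace of the finite \'etale cover of invertible degree. The point at which you stop --- verifying that $\bar\alpha$ is still a perfect obstruction theory --- is exactly the step the paper disposes of in one line: the splitting trace exhibits $\bar\alpha$ as a direct summand (a retract in the arrow category) of $\xi_{\ast}\alpha$, and since $\xi$ is finite \'etale the functor $\xi_{\ast}$ is exact and faithful, so $h^{j}(\xi_{\ast}\alpha)=\xi_{\ast}h^{j}(\alpha)$; hence $\xi_{\ast}\alpha$ has invertible $h^{0}$ and surjective $h^{-1}$, and a retract of a map with these properties again has them. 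So the closing move you are missing is not to pull $\bar\alpha$ back and compare with $\alpha$ upstairs, but to push $\alpha$ forward and exploit exactness of $\xi_{\ast}$.

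That said, your worry that the complementary summand ``can interfere'' points at a genuine subtlety which the paper's one-liner passes over. For $\bar\alpha$ to be an honest retract of $\xi_{\ast}\alpha$ the unit and trace squares must commute, i.e.\ $\xi_{\ast}\alpha$ must be diagonal for the decomposition $\xi_{\ast}\xi^{\ast}(-)\cong(-)\oplus((-)\otimes\ms A')$ --- equivalently, $\alpha$ must itself descend along $\xi$. For an arbitrary $\alpha$ this fails, and so does the construction: on $\ms Z=\ms W\sqcup\ms W$ the splitting trace averages the two components, and the average of two perfect obstruction theories (say $\phi$ and $-\phi$) need not be one, since the average of two isomorphisms on $h^{0}$ can vanish. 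To finish honestly one must therefore invoke the compatibility of $\alpha$ with the covering; this is automatic in the paper's application, where $\alpha$ is pulled back from $\ms W$ up to the canonical identification $\LL_{\ms Z/S}\cong\xi^{\ast}\LL_{\ms W/S}$, and it is also precisely the descent datum that your ``faithfully flat descent'' fallback would require --- it is not free. In short: your approach is the paper's, your diagnosis of where the difficulty sits is accurate, but the proof is incomplete as written, and closing it requires either the retract argument above together with the diagonality of $\xi_{\ast}\alpha$, or an explicit equivariance hypothesis on $\alpha$.
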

\begin{proof}
  Write $\widebar{\ms Z}:=\ms Z\thickslash G$.  We have a diagram $\ms
  Z\to\widebar{\ms Z}\to\ms W$ with the property that the relative
  cotangent complex of any pair vanishes.  We first claim that any map
  $\xi^{\ast}\ms E\to\LL_{\ms Z/S}$ is the pullback of a map $\ms
  E|_{\widebar{\ms Z}}\to\LL_{\widebar{\ms Z}/S}$.  This follows from
  the fact that $A$ acts trivially on the sheaves making up the
  complexes $\ms E$ and $\LL_{\ms Z/S}$ and the usual description of
  sheaves on gerbes in terms of the representation theory of $A$.
  
  Thus, to prove the result, we are reduced to the case where $\ms
  Z\to\ms W$ is representable by finite \'etale maps with invertible
  degrees.  In this case, there is a splitting trace map given by
  dividing the trace of the covering by the degree.  Note that
  $\LL_{\ms Z/S}=\xi^{\ast}\LL_{\ms W/S}$, so that the perfect
  obstruction theory becomes a map $\alpha:\xi^{\ast}\ms
  E\to\xi^{\ast}\LL_{\ms W/S}$.  Taking the splitting trace produces a
  map $\ms E\to\LL_{\ms W/S}$, which is a perfect obstruction
  theory because it is a summand of $\xi_{\ast}\alpha$.
\end{proof}

\begin{cor}\label{C:v-c}
  A complex in $\D(f_{\ast}^{\ms X}(\ms S\thickslash A))$ can be
  realized as a perfect obstruction theory if and only if its pullback
  to $f_{\ast}(\ms S^{\ms X})$ can be realized as a perfect
  obstruction theory.
\end{cor}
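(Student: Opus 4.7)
The plan is to deduce this directly from Proposition \ref{P:virt-fund} applied to the map
$$\xi:f_{\ast}(\ms S^{\ms X})\to f_{\ast}^{\ms X}(\ms S\thickslash A),$$
with $G=f_{\ast}A$, together with a short argument that pullback along $\xi$ preserves the property of being a perfect obstruction theory.

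First I would verify that $\xi$ satisfies the hypotheses of Proposition \ref{P:virt-fund}. The standing assumption of the deformation-theoretic subsection guarantees that $f_{\ast}A$ is finite \'etale over $S$, and it is tame since $A$ is. Condition (1) of \ref{P:virt-fund} is the statement that $f_{\ast}A$ lies in the kernel of $\ms I(f_{\ast}(\ms S^{\ms X}))\to\xi^{\ast}\ms I(f_{\ast}^{\ms X}(\ms S\thickslash A))$, which is exactly what it means for $\xi$ to factor through the rigidification $f_{\ast}(\ms S^{\ms X})\thickslash f_{\ast}A$. Condition (2) then follows from the preceding corollary, which identifies the induced map $f_{\ast}(\ms S^{\ms X})\thickslash f_{\ast}A\to f_{\ast}^{\ms X}(\ms S\thickslash A)$ as representable by finite \'etale covers; their degrees divide a power of the order of $A$ and so are invertible on the base by tameness.

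With these hypotheses in place, Proposition \ref{P:virt-fund} immediately gives one direction: if $\xi^{\ast}\ms E\to\LL_{f_{\ast}(\ms S^{\ms X})/S}$ is a perfect obstruction theory, then the induced map $\ms E\to\LL_{f_{\ast}^{\ms X}(\ms S\thickslash A)/S}$ is as well. For the converse I would argue that $\xi^{\ast}\LL_{f_{\ast}^{\ms X}(\ms S\thickslash A)/S}\to\LL_{f_{\ast}(\ms S^{\ms X})/S}$ is a quasi-isomorphism: by the transitivity triangle, this reduces to showing that the relative cotangent complex vanishes for each of the two factorizing maps $f_{\ast}(\ms S^{\ms X})\to f_{\ast}(\ms S^{\ms X})\thickslash f_{\ast}A\to f_{\ast}^{\ms X}(\ms S\thickslash A)$. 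The first is an $f_{\ast}A$-gerbe (handled exactly as in Corollary \ref{C:no-cotgt-cplx}, since $f_{\ast}A$ is finite \'etale), and the second is representable by finite \'etale morphisms by the hypothesis already verified.

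Once $\xi^{\ast}\LL_{f_{\ast}^{\ms X}(\ms S\thickslash A)/S}\simeq\LL_{f_{\ast}(\ms S^{\ms X})/S}$, the perfect-obstruction-theory property passes through $\xi$ in both directions, since $\xi$ is faithfully flat (indeed an fppf gerbe followed by a finite \'etale cover) and the cone on the obstruction-theory map has its cohomology sheaves characterized by a condition that is local in the fppf topology. The main place where one needs to be careful is checking that the degrees of the finite \'etale cover produced by the corollary are genuinely invertible on $f_{\ast}^{\ms X}(\ms S\thickslash A)$; this is the only point at which the tameness of $A$ is used, and it is what lets the splitting trace argument inside Proposition \ref{P:virt-fund} run.
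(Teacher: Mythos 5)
Your argument is correct and is exactly the intended one: the paper offers no written proof of this corollary, placing it immediately after Proposition \ref{P:virt-fund} and the corollary asserting that $f_{\ast}(\ms S^{\ms X})\thickslash f_{\ast}A\to f_{\ast}^{\ms X}(\ms S\thickslash A)$ is finite \'etale, and your verification of the hypotheses (including the degree-invertibility from tameness, which the paper leaves implicit) together with the converse via the vanishing of the relative cotangent complexes and flat descent of the obstruction-theory conditions is precisely how the gap is meant to be filled. Note that the identification $\xi^{\ast}\LL_{\ms W/S}\simeq\LL_{\ms Z/S}$ you use for the converse is already established inside the paper's proof of Proposition \ref{P:virt-fund}, so nothing new is needed beyond what you wrote.
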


\section{Compactified moduli of $\PGL_n$-torsors: an abstract
  approach}
\label{sec:abstract-compac}

In this section we compactify the moduli of $\PGL_n$-torsors using the
techniques of Section \ref{sec:twist-objects-rigid} and use the
structure of our compactification to prove the Irreducibility Theorem.
In Section \ref{S:genaz} we will give a more concrete description of
the abstract compactification we construct here and use it to describe
the virtual fundamental class on the moduli stack.

Throughout this section, $f:X\to S$ will be a proper flat morphism of
finite presentation between algebraic spaces, $n$ will be an integer
invertible on $S$, and $\pi:\ms X\to X$ will be a fixed $\m_n$-gerbe.  We
will abuse notation and let $f$ also stand for the geometric morphism
$X_{\retale}\to S_{\ETALE}$.

\subsection{Twisted sheaves}
\label{sec:tw-sh}

We briefly describe how the theory
developed in Section \ref{sec:twisted-objects} works out in the case
of twisted objects of the stack of coherent sheaves on $X/S$.  We fix
the $\m_n$-gerbe $\ms X\to X$ and do the twisting with respect to the
natural inclusion of $\m_n$ into the inertia stack of $f_{\ast}\ms
C_{X/S}$ (see Proposition \ref{P:retale}).

\begin{defn}
  An $\ms X$-twisted object of $f_{\ast}\ms C_{X/S}$ over $T\to S$ is
  called \emph{a flat family of $\ms X$-twisted coherent sheaves
    parametrized by $T$\/}.
\end{defn}

If the fibers of the family are torsion free, we will speak of a flat
family of torsion free $\ms X$-twisted sheaves, etc.  The reader is
referred to paragraph 2.2.6.3 of \cite{twisted-moduli} for a
discussion of associated points, purity, and torsion free sheaves on
Artin stacks.

Concretely, an $\ms X$-twisted sheaf is a sheaf $\ms F$ on $\ms X$
such that the representation of $\m_n$ on each geometric fiber of $\ms
F$ is given by scalar multiplication.  These sheaves were originally
introduced by Giraud in \cite{giraud} and have found various recent
applications in mathematical physics and in algebra.

\begin{notn}
  The stack of totally pure $\ms X$-twisted coherent sheaves with rank
  $n$ and trivialized determinant will be denoted $\Tw_{\ms X/S}(n,\ms
  O)$.
\end{notn}

It is relatively straightforward to prove that $\Tw_{\ms X/S}(n,\ms
O)$ is an Artin stack locally of finite presentation over $S$.  This
is done in detail in section 2.3 of \cite{twisted-moduli}.  Earlier
work on twisted sheaves in the context of elliptic fibrations and $K3$
surfaces was carried out by \Caldararu\ \cite{caldararu,caldararu2},
and a study of their moduli for projective varieties, with a
description of the moduli spaces associated to $K3$ and Abelian
surfaces, by Yoshioka \cite{yoshioka}.  Applications of Yoshioka's
results to a conjecture of \Caldararu\ were discovered by Huybrechts
and Stellari (described in the appendix to \cite{yoshioka}).  The
abstract approach taken here has also proven useful in the study of
certain arithmetic questions \cite{period-index-paper}.  When $\ms X$
admits a locally free twisted sheaf $\ms V$ (i.e., when its class in
$\H^2(X,\G_m)$ lies in the Brauer group of $X$), $\ms X$-twisted
sheaves are equivalent to $\pi_\ast\send(\ms V)$-modules, where the
problem of constructing moduli under various stability conditions was
first studied by Simpson \cite{simpson}.  The case in which the
$\pi_\ast\send(\ms V)$-modules have rank $1$ was studied by Hoffmann
and Stuhler \cite{h-s}; they also produced a symplectic structure on
the moduli space when $X$ is a $K3$ or abelian surface, giving results
analogous to those of Yoshioka.

\subsection{Compactification by rigidification}
\label{sec:compac-by-rig}

The natural map $\SL_n\to\PGL_n$ gives an ``extension of structure
group'' morphism $\eps:\B\SL_n\to\B\PGL_n$.

\begin{lem}
  The map $\eps$ induces an isomorphism
  $\B\SL_n\thickslash\m_n\simto\B\PGL_n$.
\end{lem}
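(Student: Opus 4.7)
The plan is to verify the universal property of rigidification and then check that the resulting comparison map is an equivalence, which is essentially local.

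First, I would identify the map on inertia induced by $\eps$. Étale-locally over the base, an $\SL_n$-torsor has automorphism sheaf $\SL_n$, and extension of structure group along $\SL_n\to\PGL_n$ induces on automorphisms precisely the quotient homomorphism $\SL_n\surj\PGL_n$, whose kernel is the central $\m_n\subset\SL_n$. Hence $\m_n\subset\ms I(\B\SL_n)$ maps to zero in $\eps^{\ast}\ms I(\B\PGL_n)$, and by the universal property of rigidification recalled in Section \ref{sec:push-forw-rigid}, $\eps$ factors uniquely (up to $2$-isomorphism) as
\[
\B\SL_n\longrightarrow\B\SL_n\thickslash\m_n\xrightarrow{\ \bar\eps\ }\B\PGL_n.
\]

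Next, I would show $\bar\eps$ is an equivalence by comparing two $\m_n$-gerbe structures on $\B\SL_n$. On the one hand, the rigidification map $\B\SL_n\to\B\SL_n\thickslash\m_n$ is a $\m_n$-gerbe by the general theory cited from \cite{a-c-v}. On the other hand, the short exact sequence $1\to\m_n\to\SL_n\to\PGL_n\to1$, together with the fact that $\m_n$ is central in $\SL_n$, implies via Giraud's theory (as recalled in Section \ref{S:azumaya}) that $\eps:\B\SL_n\to\B\PGL_n$ is itself a $\m_n$-gerbe: the fibre over a $\PGL_n$-torsor $P$ is the $\m_n$-gerbe of reductions of $P$ to an $\SL_n$-torsor. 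Because both maps are $\m_n$-gerbes inducing the same central $\m_n\subset\ms I(\B\SL_n)$, the induced map $\bar\eps$ on the quotients must be an equivalence.

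To make the last step rigorous without appealing to a black-box $2$-out-of-$3$ property, I would argue locally: essential surjectivity of $\bar\eps$ is the (étale-local) statement that every $\PGL_n$-torsor admits a lifting to an $\SL_n$-torsor, which holds since the obstruction lives in $\H^{2}(-,\m_n)$ and vanishes on a suitable étale cover; full faithfulness reduces, after choosing a local lift, to verifying that automorphisms of a $\PGL_n$-torsor correspond to automorphisms of the underlying $\SL_n$-torsor modulo $\m_n$, which is the very content of the exact sequence. The only mildly subtle point — and the place requiring the most care — is cleanly matching the $\m_n$-structure on $\ms I(\B\SL_n\thickslash\m_n)$ coming from rigidification with the $\m_n$-structure coming from the gerbe of liftings; once this compatibility is pinned down (it boils down to the observation that both arise from the same central inclusion $\m_n\hookrightarrow\SL_n$), the proof is complete.
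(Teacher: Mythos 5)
Your argument is correct, but it takes a different route from the paper's. You construct the comparison map $\bar\eps$ via the universal property and then verify directly that it is an equivalence, using that both $\B\SL_n\to\B\SL_n\thickslash\m_n$ and $\eps:\B\SL_n\to\B\PGL_n$ are $\m_n$-gerbes for the same central $\m_n\subset\ms I(\B\SL_n)$, with the local lifting of torsors and the exact sequence $1\to\m_n\to\SL_n\to\PGL_n\to1$ supplying essential surjectivity and full faithfulness. The paper instead never constructs an inverse or checks anything locally: it identifies, for an arbitrary target stack $\ms S$, morphisms $\B\SL_n\to\ms S$ with $\SL_n$-equivariant objects of $\ms S$, observes that $\PGL_n$-equivariant objects are exactly the $\SL_n$-equivariant ones on which $\m_n$ acts trivially (i.e.\ the morphisms killing $\m_n$ on inertia), and concludes that $\B\PGL_n$ corepresents the same $2$-functor as $\B\SL_n\thickslash\m_n$ — a pure Yoneda argument. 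Your approach is more hands-on and makes the gerbe structure of $\eps$ explicit (which is anyway used elsewhere in Section \ref{S:azumaya}), but its \'etale-local steps (lifting a $\PGL_n$-torsor to an $\SL_n$-torsor, surjectivity of $\SL_n\to\PGL_n$ as \'etale sheaves) rely on the \'etale-exactness of the bottom row, i.e.\ on $n$ being invertible — a standing hypothesis of the section, so not a gap, but worth noting since the paper's universal-property argument is indifferent to the topology in which the quotient $\SL_n\to\PGL_n$ is taken. You are also right to flag the matching of the two $\m_n$-structures on inertia as the one point needing care; as you say, both are induced by the same central inclusion $\m_n\hookrightarrow\SL_n$, which settles it.
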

\begin{proof}
  Given a stack $\ms S$, there is a natural equivalence of categories
  between morphisms $\B\SL_n\to\ms S$ and $\SL_n$-equivariant objects
  of $\ms S$.  (The reader is referred to section 3.A of
  \cite{kovacs-2006} for a description of this equivalence.)  On the
  other hand, there is clearly a natural equivalence between
  $\PGL_n$-equivariant objects of $\ms S$ and $\SL_n$-equivariant
  objects on which the $\m_n\subset\SL_n$ acts trivially.  But these
  correspond precisely to morphisms $\B\SL_n\to\ms S$ such that the
  induced map on inertia annihilates $\m_n\subset\ms
  I(\B\SL_n)$.  The lemma follows from the universal property of the
  rigidification.
\end{proof}

Taking the associated locally free sheaf with trivialized determinant
yields an inclusion $\B\SL_n\subset\ms T^{\ms O}_{X/S}(n)$.  (Note
that the natural target is not $\ms P^{\ms O}_{X/S}(n)$ unless the
fibers of $X/S$ are Cohen-Macaulay.)  Moreover, there is a natural
inclusion $\m_n\inj\ms I(\ms T^{\ms O}_{X/S}(n))$ extending the
inclusion over $\B\SL_n$.  It follows that there is an inclusion
$\B\PGL_n\inj(\ms T^{\ms O}_{X/S}(n)\thickslash\m_n)$.

There is a natural morphism from
$\chi:f_{\ast}\B\PGL_n\to\R^2f_{\ast}\m_n$ which we may define as
follows.  (Note that since $X$ is proper over $S$, the sheaf
$\R^2f_{\ast}\m_n$ on $S_{\ETALE}$ is a quasi-finite algebraic space
of finite presentation by Artin's theorem.  A proof in terms of
algebraic spaces may be found in the last chapter of
\cite{repr-artin}.)  Given an object of
$f_{\ast}\B\PGL_n=f_{\ast}(\B\SL_n\thickslash\m_n)$ over some $T\to
S$, there is an associated $\m_n$-gerbe on $X\times_S T$ (see
Definition \ref{def:gerbe}), and we simply take the image in
$\H^0(T,\R^2f_{\ast}\m_n)$.

\begin{lem}
  The stack $f_{\ast}(\ms T^{\ms O}_{X/S}(n)\thickslash\m_n)$ is an
  Artin stack locally of finite presentation over $S$.  If in addition
  $f$ is smooth then the stack is quasi-proper.
\end{lem}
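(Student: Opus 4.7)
The strategy is to reduce both claims to facts about stacks of twisted sheaves via the gerbe-class morphism to $\R^2f_{\ast}\m_n$.

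For algebraicity, Definition \ref{def:gerbe} produces a natural morphism $\chi\colon f_{\ast}(\ms T^{\ms O}_{X/S}(n)\thickslash\m_n)\to\R^2f_{\ast}\m_n$. Since the target is an algebraic space of finite presentation over $S$ by Artin's theorem, it suffices to show $\chi$ is representable by algebraic stacks locally of finite presentation. \'Etale-locally on $\R^2f_{\ast}\m_n$ the tautological class lifts to a genuine $\m_n$-gerbe $\ms X\to X\times_S T$, and the pulled-back stack is identified with the twisted part $f_{\ast}^{\ms X}(\ms T^{\ms O}_{X/S}(n)\thickslash\m_n)$. By Proposition \ref{P:stacky-means-stacky} this is algebraic if and only if $f_{\ast}((\ms T^{\ms O}_{X/S}(n))^{\ms X})$ is, and the latter is an algebraic substack of the stack of all $\ms X$-twisted coherent sheaves cut out by the conditions of being perfect, totally supported, of generic rank $n$, and equipped with a trivialization of the determinant. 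The ambient stack is algebraic and locally of finite presentation over $S$ by section 2.3 of \cite{twisted-moduli}, and the imposed conditions preserve algebraicity.

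For quasi-properness when $f$ is smooth, I would verify the existence part of the valuative criterion. Given a DVR $R$ with fraction field $K$ and a map $\spec K\to f_{\ast}(\ms T^{\ms O}_{X/S}(n)\thickslash\m_n)$, first extend its cohomology class. Smoothness and properness of $f$ together with $n$ being invertible imply that $\R^2f_{\ast}\m_n$ is finite \'etale over $S$, so after a finite extension of $R$ the class extends to an honest $\m_n$-gerbe $\ms X_R\to X_R$ restricting to the given gerbe over $K$. It then remains to extend an $\ms X_K$-twisted perfect totally supported sheaf $\ms F_K$ of rank $n$ with trivialized determinant across to $\ms X_R$.

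The main obstacle is this last step, a Langton-style argument in the twisted setting. Since $f$ is smooth, $X_R$ is regular; start with any coherent $R$-flat extension $\ms F$ of $\ms F_K$ (existence follows from the algebraicity of the full stack of $\ms X$-twisted coherent sheaves, via a twisted Quot-scheme argument). If the special fiber fails to be totally supported or to have generic rank $n$, iteratively modify $\ms F$ by saturation or by replacing it with the image of a suitable evaluation map, as in Langton's original argument; termination follows from the strict decrease of a discrete invariant such as the length of the torsion on the special fiber. The trivialization $\det\ms F_K\simto\ms O_{X_K}$ then extends uniquely to $\det\ms F\simto\ms O_{X_R}$ by normality of $X_R$, assembling to the desired $R$-point of the stack.
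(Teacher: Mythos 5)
Your first paragraph reproduces the paper's argument for algebraicity essentially verbatim: reduce along $\chi$ to the algebraic space $\R^2f_{\ast}\m_n$, lift the tautological class \'etale-locally to a gerbe $\ms X$, invoke Proposition \ref{P:stacky-means-stacky}, and recognize $f_{\ast}((\ms T^{\ms O}_{X/S}(n))^{\ms X})$ as an open substack of the algebraic stack of perfect $\ms X$-twisted sheaves with trivialized determinant. That half is correct and is the same route the paper takes. The reduction of quasi-properness to extending a single twisted sheaf over a fixed gerbe $\ms X_R\to X_R$ (after a finite extension splitting $\R^2f_{\ast}\m_n$ into sections) also matches the paper.

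The gap is in your last sentence. It is not true that the trivialization $\det\ms F_K\simto\ms O_{X_K}$ ``extends uniquely to $\det\ms F\simto\ms O_{X_R}$ by normality of $X_R$.'' Normality lets you extend functions across codimension $\geq 2$, but the special fibre is a divisor. Concretely: $\det\ms F$ is indeed abstractly trivial on $X_R$ (the special fibre is the principal divisor $(t)$), but composing any trivialization $\iota:\det\ms F\simto\ms O_{X_R}$ with the given generic one identifies $\det\ms F$ with the subsheaf $t^{s}\ms O_{X_R}\subset\ms O_{X_K}$ for some integer $s$ that you do not control; when $s\neq 0$ the generic trivialization extends only as a rational section with a zero or pole along $X_\kappa$, not as an isomorphism. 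To repair this one must replace $\ms F$ by a twist $\ms F(t^{-s'})$ with $ns'=s$, and since $s$ need not be divisible by $n$ this forces a further \emph{ramified} finite extension of $R$ adjoining an $n$th root of $t$ --- which is precisely why the statement is quasi-properness (valuative criterion allowing finite extensions) and is the main content of the paper's proof of this half. Your proposal uses a finite extension only to split $\R^2f_{\ast}\m_n$ and never confronts this normalization of the determinant. (Separately, the Langton-style iteration you describe is more than is needed here --- for the existence part one may simply take the subsheaf of $j_{\ast}\ms F_K$ generated by any coherent extension to get an $R$-flat, totally supported extension --- but that is harmless; the determinant step is the real omission.)
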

\begin{proof}
  Since $\R^2f_{\ast}\m_n$ is an algebraic space, it suffices to show
  that $\chi$ makes $f_{\ast}(\ms T^{\ms O}_{X/S}(n))$ into an
  algebraic $(\R^2f_{\ast}\m_n)$-stack of finite presentation.  To
  prove this, it suffices to work locally on $\R^2f_{\ast}\m_n$.
  Thus, as any section of $\R^2f_{\ast}\m_n$ (and in particular, the
  ``universal section'' given by the identity map) is locally
  associated to the cohomology class of a $\m_n$-gerbe, we see that it
  suffices to prove that, given a $\m_n$-gerbe $\ms X\to X$, the stack
  $f_{\ast}^{\ms X}(\ms T^{\ms O}_{X/T}(n)\thickslash\m_n)$ is an
  Artin stack locally of finite presentation.  Applying
  Proposition \ref{P:stacky-means-stacky}, we see that it suffices to prove that
  $f_{\ast}((\ms T^{\ms O}_{X/T}(n))^{\ms X})$ is an Artin stack
  locally of finite presentation.  But this is an open substack of the
  stack of perfect $\ms X$-twisted sheaves with trivialized
  determinant, which is known to be Artin and locally of finite
  presentation.  (For the proof that it is an Artin stack, the reader
  is referred to section 2.3 of \cite{twisted-moduli}.  The condition
  that the fibers be perfect is clearly an open condition, and closed
  if $X/S$ is smooth.)

  Suppose $f$ is smooth, and let $(R,(t),\kappa)$ be a discrete
  valuation ring over $S$ with fraction field $K$.  Suppose $\spec K\to
  f_{\ast}(\ms T^{\ms O}_{X/S}(n)\thickslash\m_n)$ is a $1$-morphism.
  We may suppose without loss of generality that $S=\spec R$.  Since
  $f$ is proper and smooth, $\R^2f_{\ast}\m_n$ is finite and \'etale
  over $R$.  Making a finite base change (which is permitted in the
  stacky version of the valuative criterion), we may assume that
  $\R^2f_{\ast}\m_n$ is a disjoint union of sections over $\spec R$.
  It follows that to prove that $f_{\ast}(\ms T^{\ms
    O}(n)\thickslash\m_n)$ is quasi-proper, it suffices to prove that
  $f_{\ast}((\ms T^{\ms O}_{X/S}(n))^{\ms X})$ is quasi-proper, where
  $\ms X\to X$ is an arbitrary $\m_n$-gerbe.  Since $X$ is regular,
  the condition that the twisted sheaf be perfect is trivial, and the
  result comes down to showing that given a discrete valuation ring
  $R$ and a torsion free $\ms X$-twisted sheaf $\ms F$ of rank $n$
  with trivialized determinant over the generic point of $R$, there is
  an extension of $\ms F$ to a flat family over a finite flat
  extension of $R$ such that the trivialization of the determinant
  extends.

  Let $K$ be the fraction field of $R$ and $\kappa$ its residue field.
  It is easy to see that any flat extension $\ms G$ of $\ms F$ will
  have trivial determinant (as all invertible sheaves on $\spec R$ are
  trivial).  Choose an isomorphism $\iota:\det\ms G\simto\ms O$.
  Composing with the fixed generic isomorphism $\det\ms F\simto\ms
  O_{X_K}$ yields an injection $\alpha:\det\ms G\to\ms O_{X_K}$ (the
  latter being viewed as a sheaf on $X$ by pushforward from $X_K$).
  Since $X$ is geometrically connected, the trivial invertible $\ms
  O_{X_R}$-subsheaves of $\ms O_{X_K}$ all have the form $t^s\ms
  O_{X_R}$ for some $s\in\Z$.  Taking an $n$th root of $t$ if
  necessary (which may result in a finite extension of $R$), we may
  assume that $s=ns'$ for some integer $s'$.  Replacing $\ms G$ by
  $\ms G(t^{-s'})$ yields $\det\ms G(t^{-s'})=(\det\ms G)(t^{-s})$.
  Thus, via $\iota$ and the given isomorphism $\det\ms F\simto\ms O$,
  $\det\ms G(t^{-s'})$ gets identified with $t^{-s}t^s\ms O$, i.e.,
  $\iota$ yields a trivialization of $\det\ms G(t^{-s'})$ which
  extends that of $\det\ms F$, as desired.
\end{proof}

\begin{lem}
  The natural map $f_{\ast}\B\PGL_n\to f_{\ast}(\ms T^{\ms
    O}_{X/S}(n)\thickslash\m_n)$ is representable by open immersions.
\end{lem}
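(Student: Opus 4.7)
The plan is to trace the open immersion $\B\SL_n \inj \ms T^{\ms O}_{X/S}(n)$ (identifying $\B\SL_n$ with the substack of locally free sheaves of rank $n$ with trivialized determinant) through the rigidification by $\m_n$ and then through the pushforward, reducing at the last step to the well-known fact that ``locally free'' is an open condition in a proper flat family of finitely presented coherent sheaves.

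First I would check the assertion étale-locally on the target. Given an affine scheme $T$ and a $1$-morphism $\gamma:T\to f_{\ast}(\ms T^{\ms O}_{X/S}(n)\thickslash\m_n)$, Definition \ref{def:gerbe} produces a $\m_n$-gerbe $\ms X_\gamma\to X_T$, and the proof of Lemma \ref{L:pushforwardcovering} (applied to $\ms S=\ms T^{\ms O}_{X/S}(n)$) gives an étale cover $U\to T$ such that $\gamma|_U$ factors through $f_{\ast}\bigl((\ms T^{\ms O}_{X/S}(n))^{\ms X_\gamma}\bigr)$, i.e.\ corresponds to a flat family of totally pure $\ms X_\gamma$-twisted sheaves $\ms F$ of rank $n$ with trivialized determinant on $X_U$. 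Applying the same compatibility to the inclusion $\B\SL_n\inj\ms T^{\ms O}_{X/S}(n)$ and using $\B\PGL_n=\B\SL_n\thickslash\m_n$ together with Propositions \ref{P:transition-0} and \ref{P:transition}, the fiber product
$$U\times_{f_{\ast}(\ms T^{\ms O}_{X/S}(n)\thickslash\m_n)}f_{\ast}\B\PGL_n$$
is canonically identified with
$$U\times_{f_{\ast}((\ms T^{\ms O}_{X/S}(n))^{\ms X_\gamma})}f_{\ast}(\B\SL_n^{\ms X_\gamma}).$$

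Now the inclusion $\B\SL_n\inj\ms T^{\ms O}_{X/S}(n)$ is the inclusion of the open substack of locally free sheaves (an $\SL_n$-torsor on $V$ is exactly a locally free rank $n$ sheaf on $V$ with a trivialized determinant), and the twisting operation $(-)^{\ms X_\gamma}$ preserves open immersions because it is defined fiberwise as a $\uhom$-substack and the open condition can be checked on any section over a local trivialization of $\ms X_\gamma$. Thus $\B\SL_n^{\ms X_\gamma}\inj(\ms T^{\ms O}_{X/S}(n))^{\ms X_\gamma}$ is the open substack parametrizing locally free $\ms X_\gamma$-twisted sheaves. Since $f:X\to S$ is proper of finite presentation, the locus in $U$ over which the fibers of the $\ms X_\gamma$-twisted sheaf $\ms F$ are locally free is open: on a local trivialization $\ms X_\gamma\cong\B{\m_n}\times V$ this is the standard openness of the locally free locus for a flat family of finitely presented coherent sheaves on $V\to U$. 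Consequently the displayed fiber product is an open subscheme of $U$, and descending along the étale cover $U\to T$ gives an open subscheme of $T$, proving representability by open immersions.

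The main obstacle is the bookkeeping of the previous paragraph: one must verify that the twisting and rigidification operations transform the open immersion $\B\SL_n\inj\ms T^{\ms O}_{X/S}(n)$ into an open immersion of pushforwards, and that the fiber product of $T$ with $f_{\ast}\B\PGL_n$ can be computed locally in terms of twisted sheaves. Both points follow in a mechanical way from the formalism of Section \ref{sec:twist-objects-rigid}, but require one to unravel the compatibility of $f_{\ast}$ with the twisting construction $(-)^{\ms X}$ and with rigidification.
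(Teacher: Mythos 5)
Your proof is correct and follows essentially the same route as the paper's (which is much terser): reduce \'etale-locally on the target to the twisted situation, identify $f_{\ast}\B\SL_n^{\ms X}$ with the stack of locally free rank-$n$ $\ms X$-twisted sheaves with trivialized determinant, and conclude by openness of the locally free locus in a flat family. (One minor slip: sections of $\ms T^{\ms O}_{X/S}(n)$ are totally \emph{supported}, not totally pure, sheaves, but this does not affect the argument.)
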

\begin{proof}
  It again suffices to prove this for $f_{\ast}\B\SL_n^{\ms X}$ and $f_{\ast}(\ms
  T^{\ms O}_{X/S}(n))^{\ms X}$, where we note that $f_{\ast}\B\SL_n^{\ms X}$
  parametrizes locally free $\ms X$-twisted sheaves of rank $n$
  and trivialized determinant and hence constitutes an open substack,
  as desired.
\end{proof}

\begin{para}\label{P:pure-mod}
  When the fibers of $X/S$ are Cohen-Macaulay, the entire discussion
  from the beginning of the section until the present paragraph also
  yields a compactification coming from the induced inclusion
  $\B\PGL_n\inj\ms P^{\ms O}_{X/S}$.  We omit the details; the
  statements of the results are literally identical, with $\ms P$
  replacing $\ms T$.  Since $\ms T^{\ms O}_{X/S}$ is much larger than
  $\ms P^{\ms O}_{X/S}$, it is preferable to use the latter whenever
  possible.  Thus, for example, if $X/S$ is a smooth morphism, then
  there results an open immersion into a quasi-proper Artin stack
  $f_{\ast}\B\PGL_n\inj f_{\ast}(\ms P^{\ms
    O}_{X/S}(n)\thickslash\m_n)$.  This latter stack will play an
  important role in what follows.  We endow it with the following
  notation.
\end{para}

\begin{notn}
  Given a $\m_n$-gerbe $\ms X$, let $\ms M^{\ms X}_n:=f_{\ast}^{\ms
    X}(\ms P^{\ms O}_{X/S}\thickslash\m_n)$.
\end{notn}
There is a surjective map $\Tw_{\ms X/S}(n,\ms O)\to\ms M^{\ms X}_n$
which is universally closed and submersive.

\section{An explicit description of $\ms M_n^{\ms X}$: generalized
  Azumaya algebras}\label{S:genaz}
In this section, we use certain algebra objects of the derived
category to give a concrete description of $\ms M_n^{\ms X}$.  Using
this description, we will show that when $X$ is a smooth projective
surface and $\ms X\to X$ has order $n$ in $\H^2(X,\G_m)$ then $\ms
M_n^{\ms X}$ has a virtual fundamental class.

\subsection{Derived Skolem-Noether}\label{S:skolem-noether}

In this section, we work primarily in the derived category of modules
over a local commutative ring $(\ms O,\mf m, k)$.  For the sake of a
smoother exposition, we assume that $\ms O$ is Noetherian, but this is
unnecessary.  On occasion, we will work in the category of chain
complexes.  However, we will use the word ``complex'' in both
settings; it will be clear in context whether we
mean an object of $\D(\ms O)$ or an object of $\operatorname{K}(\ms
O)$.  Similarly, ``isomorphism'' will be consistently used in place of
``quasi-isomorphism'' and we will always assume that isomorphisms
preserve whatever additional structures of objects are implicit.
Given a scheme $X$, the symbol $\D(X)$ will denote a derived category
of sheaves of $\ms O_X$-modules, with various conditions (boundedness,
perfection, quasi-coherence of cohomology) clear from context.  In the
end, it will suffice to work in the category denoted
$\D_{\text{fTd}}(X)$ by Hartshorne in \cite{hartshorne-residues}, so
the hypotheses on $\D$ will not be a focus of attention.

\begin{defn} 
  Given a scheme $X$, an object $A\in \D(X)$ will be called a {\it
    weak $\ms O$-algebra\/} if there are maps $\mu:A\ltensor A\to A$
  and $i:\ms O\to A$ in $\D(X)$ which satisfy the usual axioms for an
  associative unital algebra, the diagrams being required to commute
  in the derived category.
\end{defn}
In other words, a weak algebra is an algebra object of the derived
category.  Note that the derived tensor product makes $\D(X)$ into a
symmetric monoidal additive category (as the universal property of
derived functors ensures that all different associations of an
iterated tensor product are naturally isomorphic).  Thus, it makes
sense to speak of ``associative'' algebra structures.

Given an additive symmetric monoidal category, one can define most of
the usual objects and maps of the theory of algebras: (unital)
modules, bimodules, linear maps, derivations, inner derivations, maps
of algebras, etc.  We leave it to the reader to write down precise
definitions of these terms, giving two examples: Given a map of weak
algebras $A\to B$, an {\it $\ms O$-linear derivation\/} from $A$ to
$B$ is a map $\delta:A\to B$ in $\D(X)$ such that
$\delta\circ\mu_A=\mu_B\circ(\id\ltensor\delta+\delta\ltensor\id)$ in
$\D(X)$.  A derivation from $A$ to $A$ is {\it inner\/} if there is an
$\alpha:\ms O\to A$ such that
$\delta=\mu\circ(\alpha\ltensor\id)-\mu\circ(\id\ltensor\alpha)$.

Given a ring map $\ms O\to\ms O'$, the derived functor
$(\cdot)\ltensor_{\ms O}\ms O':\D(\ms O)\to\D(\ms O')$ respects the
monoidal structure.  There results a natural base change operation for
weak algebras and modules.  (This operation will be consistently
written as a change of base on the right to avoid sign errors.)

Similarly, given a weak algebra $A$ and a left $A$-module $P$, the
functor $P\ltensor(\cdot)$ takes objects of $\D(\ms O)$ to
$A$-modules.  This follows from the natural associativity of the
derived tensor product.

The first non-trivial example of a weak algebra is given by
$$\rend(K):=\rhom(K,K)$$ for a
perfect complex $K$.  Replacing $K$ by a projective resolution, one
easily deduces the weak algebra structure from the usual composition
of functions: if we write $K$ as a finite complex of free modules
(which we will also call $K$), then $\rend(K)$ has as $n$th module
$\prod_p\hom(K^p,K^{p+n})$, with differential
$\partial^n(\alpha_p)_q=(-1)^{n+1}\alpha_{q+1}d+d\alpha_q$.  Since $K$
is perfect, the $n$th module of $\rend(K)\ltensor\rend(K)$ is equal to
$$\prod_{a+b=n}\prod_{s,t}\hom(K^s,K^{s+a})\tensor\hom(K^t,K^{t+b})$$ and the 
multiplication projects to the factors with $s=t+b$ and then composes
functions as usual.  Setting $K^{\vee}=\rhom(K,\ms O)$ (the derived
dual of $K$), we have the following basic lemma.

\begin{lem}\label{L:duality compatibility} Let $K$ be a perfect
  complex.
  \begin{enumerate}
  \item[$($i$)$] There is a natural isomorphism $K\ltensor
    K^{\vee}\simto\rend(K)$.
  \item[$($ii$)$] There is a natural left action of $\rend(K)$ on $K$.
  \end{enumerate}
  Tensoring the action
$$\rend(K)\ltensor K\to K$$ with $K^{\vee}$ on the right and using $($i$)$ 
yields the multiplication of $\rend(K)$.
\end{lem}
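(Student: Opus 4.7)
The plan is to realize all three constructions at the chain level. Since $K$ is perfect, I may represent it by a bounded complex of finite free $\ms O$-modules, still denoted $K$; with this choice the derived dual $K^{\vee}$ is computed termwise as $(K^{\vee})^{q}=\hom(K^{-q},\ms O)$ with the standard sign convention, the derived tensor product $K\ltensor K^{\vee}$ is the usual total complex, and $\rend(K)$ is the explicit complex $\prod_{p}\hom(K^{p},K^{p+\bullet})$ described just before the lemma. All three operations commute with this kind of replacement, so checking the claims at the chain level suffices.

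For part (i), I would define a chain map $\theta:K\tensor K^{\vee}\to\rend(K)$ termwise by the rank-one formula: an elementary tensor $x\tensor\phi$ with $x\in K^{p}$ and $\phi\in\hom(K^{q},\ms O)$ is sent to the endomorphism that in external degree $q$ acts as $y\mapsto\phi(y)\,x$ (with the appropriate Koszul sign) and vanishes on $K^{s}$ for $s\ne q$. Since each $K^{p}$ is finitely generated free, $\theta$ is an isomorphism in every degree, so once one checks it commutes with differentials it is a quasi-isomorphism. The chain-map verification is a direct computation using the Leibniz rule on $K\tensor K^{\vee}$ and the formula $\partial^{n}(\alpha)_{q}=(-1)^{n+1}\alpha_{q+1}d+d\alpha_{q}$ recalled in the excerpt. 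Naturality of $\theta$ in $K$ is automatic because everything was written termwise.

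For part (ii), the action $a:\rend(K)\tensor K\to K$ is honest evaluation: an element $f\in\prod_{p}\hom(K^{p},K^{p+m})$ acts on $y\in K^{s}$ by $f_{s}(y)\in K^{s+m}$. This is a chain map by the very definition of the differential on $\rend(K)$, and the left-module axioms (associativity with composition and unitality) hold strictly because composition of genuine homomorphisms is strictly associative and the identity endomorphism acts as the identity. To identify the composite of $a\ltensor\id_{K^{\vee}}$ with $\theta$ with the multiplication on $\rend(K)$, I would compute on elementary tensors: $\theta$ sends $(x\tensor\phi,\,x'\tensor\phi')$ to the pair of rank-one maps $(f_{x,\phi},f_{x',\phi'})$, whose composition sends $y$ to $\phi(x')\phi'(y)\,x$, i.e.\ it is $f_{\phi(x')x,\,\phi'}$. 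On the other hand, $a$ applied to $(f_{x,\phi},x')$ yields $\phi(x')x$, which tensored with $\phi'$ gives $\phi(x')x\tensor\phi'$; its image under $\theta$ is exactly $f_{\phi(x')x,\,\phi'}$, as required. The unit axiom matches under $\theta$ with the coevaluation $\ms O\to K\tensor K^{\vee}$.

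The only real obstacle is bookkeeping: the map $\theta$ and the action $a$ must be written with the correct Koszul signs so that $\theta$ is a chain map and the multiplication of $\rend(K)$ factors through $a\tensor\id_{K^{\vee}}$ on the nose rather than up to a sign depending on degrees. Once the sign conventions are pinned down, every remaining step is a formal term-by-term verification, with no further homological input beyond the fact that perfection of $K$ makes the evaluation map $K^{q}\tensor\hom(K^{q},\ms O)\to\ms O$ and its adjoints behave like the usual finite-dimensional ones.
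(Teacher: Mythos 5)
Your chain-level argument is correct and is exactly the verification the paper intends but omits: the lemma is stated without proof, relying on the explicit model $\rend(K)^n=\prod_t\hom(K^t,K^{t+n})$ set up just beforehand, and your rank-one map $x\tensor\phi\mapsto(y\mapsto\phi(y)x)$ together with evaluation is the standard way to check (i), (ii), and the compatibility on elementary tensors. Your closing caveat about Koszul signs is precisely the point of the paper's remark following the lemma (left action, $K^{\vee}$ on the right), so the proposal matches the paper's approach in both substance and emphasis.
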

It is essential that the action be written on the left (when using the
standard sign convention for forming the total complex of a double
complex \cite[I.1.2.1]{illusie}, \cite[Appendix]{matsumura}) and that
$K^{\vee}$ be written on the right for the signs to work out.  These
kinds of sign sensitivities abound in the derived category and require
vigilance.

An algebra of the form $\rend(K)$ will be called a {\it derived
  endomorphism algebra\/}.  Our goal is to re-prove the classical
results about matrix algebras for derived endomorphism algebras of
perfect complexes.

\begin{notn}
  The symbols $P$ and $Q$ will always be taken to mean perfect
  complexes with a chosen realization as a bounded complex of finite
  free modules.  Thus, maps $P\to Q$ in the derived category will
  always come from maps of the ``underlying complexes'' (taken to mean
  the chosen realizations).  Similarly, $\rend(P)$ will have as chosen
  representative the complex constructed from the underlying complex
  of $P$ as above: $\rend(P)^n=\prod_t\hom(P^t,P^{t+n})$ with
  differential $\partial(\alpha_t)_s=(-1)^{n+1}\alpha_{s+1}
  d+d\alpha_s$.
\end{notn}
These conventions facilitate making certain basic arguments without
speaking of replacing the object by a projective resolution, etc., but
it is ultimately only important for this book-keeping reason; the
reader may ignore it without fear (until it is explicitly invoked!).

\begin{defn} Given $M\in\D(\ms O)$, the {\it annihilator of $M$\/} is
  the kernel $\ann(M)$ of the natural map from $\ms O$ to
  $\End_{\D(\ms O)}(M)$.  The quotient $\ms O/\ann(M)$ will be denoted
  by $\ms O_M$.
\end{defn}

Given an isomorphism $\psi:P\to Q(n)$, there is an isomorphism 
$\psi^{\ast}:\rend(P)\to\rend(Q)$ given by functorial conjugation by $\psi$ 
followed by the natural identification
of $\rend(Q(n))$ with $\rend(Q)$.  We will call this the {\it induced map\/}.  
The map $\psi^{\ast}$ may also be described as follows: under the natural 
identification of 
$\rend(P)$ with $P\ltensor P^{\vee}$, $\psi^{\ast}$ is identified with 
$\psi\ltensor(\psi^{\vee})^{-1}$.

\begin{thm}\label{T:the one} Let $P$ and $Q$ be non-zero perfect complexes of 
$\ms O$-modules.  If $\rend(P)\cong\rend(Q)$ as weak algebras, then there 
exists a unique $n$ such 
that the map $$\isom(P,Q(n))\to\isom(\rend(P),\rend(Q))$$ is surjective with 
each fiber a torsor under $\ms O_P^{\times}$.  If $P=Q$, then $n=0$ and the 
kernel is naturally a 
split torsor.
\end{thm}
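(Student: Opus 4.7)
The plan is to prove a derived Morita-type equivalence for $\rend(P)$ and read off the theorem from it: passing from a perfect complex $P$ to $\rend(P)$ loses exactly the information of tensoring $P$ with an invertible object of $\D(\ms O_P)$, and over the local ring $\ms O_P$ every invertible perfect object is a shift of the structure sheaf.

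First I would prove the derived Morita statement. For a nonzero perfect complex $P$, I claim that the functor
$$F_P\colon \D(\ms O_P) \longrightarrow \rend(P)\text{-Mod}, \qquad M \longmapsto P \ltensor_{\ms O_P} M,$$
(with the $\rend(P)$-action from Lemma \ref{L:duality compatibility}(ii)) is a fully faithful symmetric monoidal embedding, and multiplicative on weak algebra objects in the sense that for perfect $M$,
$$\rend_{\rend(P)}(P\ltensor_{\ms O_P} M) \;\cong\; \rend(P) \ltensor_{\ms O_P} \rend_{\ms O_P}(M)$$
as weak algebras. The key computation is $\rhom_{\rend(P)}(P,P) = \ms O_P$, obtained by rewriting $\rend(P) = P\ltensor P^\vee$ via Lemma \ref{L:duality compatibility}(i) and noting that scalars in $\ann(P)$ act as zero, so $\ms O$ enters only through the faithful quotient $\ms O_P$.

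Using this, I produce the shift $n$ and prove surjectivity. Given a weak algebra isomorphism $\phi\colon \rend(P) \simto \rend(Q)$, regard $Q$ as an $\rend(P)$-module via $\phi$; by Morita there is an essentially unique perfect $M_\phi \in \D(\ms O_P)$ together with an isomorphism $Q \cong P \ltensor_{\ms O_P} M_\phi$ of $\rend(P)$-modules. Applying $\rend$ and invoking multiplicativity gives
$$\rend(P) \;\cong\; \rend(Q) \;\cong\; \rend(P) \ltensor_{\ms O_P} \rend_{\ms O_P}(M_\phi),$$
which forces $\rend_{\ms O_P}(M_\phi) \cong \ms O_P$. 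Hence $M_\phi$ is an invertible perfect object of $\D(\ms O_P)$, and over a local ring the Picard group of the perfect derived category is generated by the shift, so $M_\phi \cong \ms O_P[-n]$ for a unique $n \in \Z$ (the single cohomological degree in which a minimal representative is nonzero, pinned down by Nakayama). Thus $Q \cong P[-n]$, equivalently $P \simto Q(n)$, and chasing through $F_P$ shows that any such isomorphism $\psi\colon P \simto Q(n)$ recovers $\phi$ as $\psi^\ast$ after adjustment by a unit.

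For uniqueness of $n$ and the fiber torsor structure, if $P \cong Q(n) \cong Q(n')$ then $Q \cong Q(n-n')$, and comparing top nonvanishing cohomology forces $n = n'$. Given this $n$, two isomorphisms $\psi_1, \psi_2\colon P \to Q(n)$ induce the same $\psi_i^\ast$ iff $\psi_2^{-1}\psi_1 \in \aut_{\D(\ms O)}(P)$ centralizes $\rend(P)$ under conjugation; the Morita computation identifies such central automorphisms with $\ms O_P^\times$, giving the torsor structure. When $P = Q$, the identity $\id_P$ is a canonical element in the fiber over $\id_{\rend(P)}$, splitting the torsor.

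The main obstacle is the derived Morita step, and specifically the identification $\rhom_{\rend(P)}(P,P) = \ms O_P$ (rather than the naive $\ms O$), which demands a careful bookkeeping of how $\ann(P)$ acts in $\D(\ms O)$ and how it interacts with the weak (not strict) algebra structure on $\rend(P)$. A secondary technical point is the classification of invertible perfect objects in $\D(\ms O_P)$ as shifts of $\ms O_P$; this is standard once one knows $\ms O_P$ is local, which it is as a quotient of $\ms O$ in the present Noetherian setting.
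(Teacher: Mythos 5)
Your strategy is genuinely different from the paper's proof (which reduces to the residue field, where the bounded derived category is semisimple and the classical Skolem--Noether theorem plus a grading argument applies, and then lifts through small extensions of the completion by constructing explicit chain-level homotopies, finally descending by flat base change), but it contains a gap that I do not believe can be repaired at the level of structure the theorem assumes. The hypothesis is only that $\rend(P)\cong\rend(Q)$ as \emph{weak} algebras, i.e.\ as monoid objects of the triangulated category $\D(\ms O)$, with associativity holding only up to unspecified homotopy; correspondingly a ``module'' over such an object is only a homotopy-category-level action. In this setting the objects your argument leans on are not defined: $\rhom_{\rend(P)}(P,P)$ does not exist as an object of $\D(\ms O)$ (the paper only ever uses the \emph{set} $\hom_{\Xi}(M,N)$, realized as a kernel of maps of $\hom$-modules, precisely because nothing more is available), and the category of weak $\rend(P)$-modules is not triangulated, so one cannot run bar resolutions, Beck monadicity, or generation arguments to establish that $F_P$ is fully faithful or that $M_\phi$ exists and is essentially unique. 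A genuine derived Morita theorem requires a DG- or $A_\infty$-enhancement, and even granting that $\rend(P)$ and $\rend(Q)$ carry canonical enhancements, the given isomorphism $\phi$ is only an isomorphism in the homotopy category and need not lift to an isomorphism of enhanced algebras---this is exactly the coherence difficulty that the paper's infinitesimal induction is designed to circumvent. A secondary problem: since $\ann(P)$ kills $P$ only up to homotopy, $P$ is not naturally an object of $\D(\ms O_P)$ (for instance the Koszul complex of a nonzerodivisor $x$ is annihilated by $x$ in $\D(\ms O)$ without being a complex of $\ms O/(x)$-modules), so the functor $F_P$ on $\D(\ms O_P)$ is not well defined as written.

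That said, the skeleton of your argument is the right heuristic, and pieces of it survive in the paper in weakened form: Lemma \ref{L:completing} shows exactly that finding $u$ with $\phi=u^{\ast}$ is equivalent to finding an $\rend(P)$-linear isomorphism $P\to Q$ (your Morita step, phrased so that only hom-\emph{sets} are needed), and your centralizer computation is how the $\ms O_P^{\times}$-torsor structure is identified. But the existence of such an $\rend(P)$-linear isomorphism is precisely what the paper must establish by hand---first over the residue field via the classical Skolem--Noether theorem, then over each small extension via Lemmas \ref{L:algebra quotient} and \ref{L:auto}, then in the limit via Lemma \ref{L:homotopy}. If you want to salvage the Morita route you would first need to show that the weak algebra isomorphism is realized by an isomorphism of genuine DG-algebras, which is essentially equivalent to the theorem itself.
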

\begin{cor}\label{C:ders}  The sequence
$$0\to \ms O_P\to\End(P)\to\Der(\rend(P))\to 0$$
is exact.  More generally, if $P$ and $Q(n)$ are isomorphic, then the
map
$$\hom(P,Q(n))\to\Der(\rend(P),\rend(Q))$$ is surjective 
with each fiber naturally a torsor under $\ms O_P$.
\end{cor}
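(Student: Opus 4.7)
The plan is to deduce the corollary from Theorem \ref{T:the one} by a dual-numbers argument that trades derivations for first-order deformations of algebra isomorphisms. Fix an isomorphism $\phi:P\simto Q(n)$, and let $f = \phi^{\ast}:\rend(P)\simto\rend(Q)$ be the induced weak-algebra isomorphism. Write $\ms O[\eps] := \ms O[\eps]/(\eps^{2})$ for the dual numbers; since $\ms O\to\ms O[\eps]$ is flat, base change commutes with both $\rend(\,\cdot\,)$ and formation of the annihilator ideal, so $\ms O[\eps]_{P[\eps]} = \ms O_{P}[\eps]$, where $P[\eps] := P\ltensor\ms O[\eps]$.

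The key observation is the standard dual-numbers dictionary. An $\ms O[\eps]$-algebra map $\rend(P)[\eps]\to\rend(Q)[\eps]$ whose mod-$\eps$ reduction is $f$ is uniquely of the form $f+\eps\delta$ for some $\ms O$-linear $\delta:\rend(P)\to\rend(Q)$, and expanding the multiplicativity condition modulo $\eps^{2}$ identifies the algebra axiom on $f+\eps\delta$ with the Leibniz rule for $\delta$ (with $\rend(Q)$ viewed as an $\rend(P)$-bimodule through $f$). Similarly, an $\ms O[\eps]$-linear isomorphism $P[\eps]\simto Q(n)[\eps]$ lifting $\phi$ has the unique form $\phi+\eps\psi$ for some $\psi\in\hom(P,Q(n))$, and the formula $\psi^{\ast} = \psi\ltensor(\psi^{\vee})^{-1}$ preceding the theorem shows that $(\phi+\eps\psi)^{\ast} = f+\eps\delta_{\psi}$ for a derivation $\delta_{\psi}$ depending $\ms O$-linearly on $\psi$.

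Now apply Theorem \ref{T:the one} over $\ms O[\eps]$ to the perfect complexes $P[\eps]$ and $Q[\eps]$: the map
$$\isom(P[\eps],Q(n)[\eps])\longrightarrow\isom(\rend(P)[\eps],\rend(Q)[\eps])$$
is surjective with fibers torsors under $\ms O_{P}[\eps]^{\times}$. Restricting both sides to elements whose mod-$\eps$ reductions equal $\phi$ and $f$ respectively preserves surjectivity (a preimage whose reduction is some $\phi'$ with $(\phi')^{\ast}=f$ can be adjusted by lifting to $\ms O_{P}[\eps]^{\times}$ the element of $\ms O_{P}^{\times}$ relating $\phi'$ to $\phi$ via Theorem \ref{T:the one} over $\ms O$), and the fibers become torsors under
$$\ker\!\bigl(\ms O_{P}[\eps]^{\times}\to\ms O_{P}^{\times}\bigr)=\{\,1+\eps b:b\in\ms O_{P}\,\},$$
which is canonically the additive group $\ms O_{P}$ via $1+\eps b\leftrightarrow b$. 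Through the two dictionaries above this is exactly the asserted surjection $\psi\mapsto\delta_{\psi}$ with $\ms O_{P}$-torsor fibers. The first exact sequence is the special case $P=Q$, $n=0$, $\phi=\id$, where $\psi\mapsto\delta_{\psi}$ is concretely the adjoint map $\psi\mapsto[\psi,\,\cdot\,]$; injectivity of $\ms O_{P}\to\End(P)$ is immediate from the definition $\ms O_{P}=\ms O/\ann(P)$. The main subtlety is the derived dual-numbers dictionary for weak algebras---verifying that `algebra map in $\D(\ms O[\eps])$ lifting $f$' corresponds bijectively to `derivation in $\D(\ms O)$'---but this is a formal consequence of the flatness of $\ms O\to\ms O[\eps]$ together with the symmetric monoidal structure on $\D(\ms O)$.
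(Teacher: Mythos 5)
Your argument is exactly the paper's proof, which reads in full: ``Apply Theorem \ref{T:the one} to $P[\eps]$ (as a complex over $\ms O[\eps]$) and look at automorphisms of the weak algebra $\rend_{\ms O[\eps]}(P[\eps])$ reducing to the identity modulo $\eps$.'' You have simply spelled out the dual-numbers dictionary and the reduction of the torsor structure from $\ms O_P[\eps]^{\times}$ to $1+\eps\ms O_P\cong\ms O_P$, all of which is correct and implicit in the paper's one-sentence proof.
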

\begin{proof} Apply Theorem \ref{T:the one} to $P[\eps]$ (as a complex over $\ms
  O[\eps]$) and look at automorphisms of the weak algebra $\rend_{\ms
    O[\eps]}(P[\eps])$ reducing to the identity modulo $\eps$.
\end{proof}

The proof of Theorem \ref{T:the one} will make use of the completion of $\ms 
O$ to lift the classical theorems on matrix algebras from the closed fiber by 
``infinitesimal
induction.''

\begin{prop}\label{P:field} If $\ms O$ is a field $k$ then Theorem \ref{T:the 
one} and Corollary \ref{C:ders} hold.
\end{prop}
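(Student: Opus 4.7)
The key observation is that over a field, every bounded complex is formal, so I would begin by writing $P\cong\bigoplus_i V_i[-i]$ and $Q\cong\bigoplus_i W_i[-i]$ with $V_i=H^i(P)$, $W_i=H^i(Q)$ finite-dimensional and almost all zero. Hence $\rend(P)\cong\bigoplus_{i,j}\hom(V_i,V_j)[j-i]$ is also formal. Since $\hom_{\D(k)}$ of bounded complexes reduces to graded Hom on cohomology, a weak-algebra isomorphism $\rend(P)\simeq\rend(Q)$ in $\D(k)$ is exactly a graded $k$-algebra isomorphism $\phi$ between $H^\bullet\rend(P)=\bigoplus_{i,j}\hom(V_i,V_j)$ and $H^\bullet\rend(Q)=\bigoplus_{i,j}\hom(W_i,W_j)$, where $\hom(V_i,V_j)$ sits in cohomological degree $j-i$. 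So the problem is a purely classical one about graded matrix algebras.

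To recover the shift $n$ I would look at degree zero: $\phi^0:\bigoplus_i\End(V_i)\simto\bigoplus_j\End(W_j)$ permutes the primitive central idempotents $e_i$, giving a bijection $\sigma$ between the supports $\{i:V_i\neq 0\}$ and $\{j:W_j\neq 0\}$ with $\dim V_i=\dim W_{\sigma(i)}$. Because $\phi$ preserves degree and the slice $e_j\cdot H^{j-i}\rend(P)\cdot e_i=\hom(V_i,V_j)$ is nonzero for $V_i,V_j\neq 0$, one must have $\sigma(j)-\sigma(i)=j-i$ on the support, so $\sigma$ is translation by a single integer $n$, $\dim V_i=\dim W_{i+n}$ for all $i$, and $P\cong Q(n)$. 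Uniqueness of $n$ is immediate since no nonzero bounded complex of vector spaces is isomorphic to a nontrivial shift of itself.

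Composing with any fixed isomorphism $P\simto Q(n)$ reduces the surjectivity and fiber claims to the case $P=Q$, $n=0$, i.e.\ to showing $\aut(P)\to\aut(\rend(P))$ is surjective with kernel $k^\times=\ms O_P^\times$. Given $\phi\in\aut(\rend(P))$, which fixes each $e_i$ by the above, classical Skolem--Noether on each $\End(V_i)$ produces $\alpha_i\in\GL(V_i)$, unique up to scalars $d_i\in k^\times$, with $\phi|_{\End(V_i)}$ equal to conjugation by $\alpha_i$. The restriction of $\phi$ to $\hom(V_i,V_j)$ is a bimodule map over the pushed-forward $\End(V_i)$- and $\End(V_j)$-actions, hence of the form $f\mapsto\lambda_{ij}\alpha_j f\alpha_i^{-1}$ for scalars $\lambda_{ij}\in k^\times$; compatibility of $\phi$ with composition forces the cocycle relation $\lambda_{il}=\lambda_{jl}\lambda_{ij}$, so $\lambda_{ij}=d_j/d_i$. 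After rescaling each $\alpha_i$ by $d_i$, the direct sum $\alpha:=\bigoplus\alpha_i\in\aut(P)$ induces $\phi$. The kernel consists of $\alpha$ acting on each $V_i$ by a scalar $c_i$ subject to $c_j/c_i=1$ on the support, i.e.\ the diagonal copy of $k^\times$; the identity of $P$ provides a canonical lift of $\id_{\rend(P)}$, splitting the torsor.

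The only non-trivial step is the cocycle-matching on the off-diagonal pieces; everything else is a clean block-diagonal reduction to the classical Skolem--Noether theorem combined with the formality of $\rend(P)$ over a field. Corollary \ref{C:ders} then follows by applying the theorem over the base ring $k[\epsilon]/(\epsilon^2)$ to the complex $P\otimes_k k[\epsilon]/(\epsilon^2)$ and restricting attention to automorphisms reducing to the identity modulo $\epsilon$, which correspond on both sides to derivations.
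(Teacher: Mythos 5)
Your treatment of the Theorem itself over a field is correct, and it takes a genuinely (if mildly) different route from the paper's. You decompose $\rend(P)$ into its blocks $\hom(V_i,V_j)$ via the primitive central idempotents of the degree-zero part, deduce from degree-preservation that the induced permutation of blocks is a translation by a single integer $n$, apply classical Skolem--Noether block by block, and splice the resulting $\alpha_i$ together by solving the cocycle $\lambda_{ij}=d_j/d_i$. The paper instead views all of $\rend(P)$ as a single \emph{ungraded} matrix algebra acting on the graded space $V=\bigoplus V_i$, applies Skolem--Noether exactly once to get an ungraded $A$-linear isomorphism $V\to W$, and then shows any such intertwiner is automatically homogeneous. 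Your version makes the origin of the shift $n$ and of the torsor structure more transparent, at the price of the idempotent and cocycle bookkeeping, which you do carry out correctly (including the identification of the kernel with the diagonal $k^\times=\ms O_P^\times$ and the canonical splitting when $P=Q$).

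The gap is in your final sentence, the deduction of Corollary \ref{C:ders}. You propose to apply Theorem \ref{T:the one} over $k[\eps]/(\eps^2)$, but $k[\eps]$ is not a field, and at this stage the Theorem is only available over fields. The general case is established afterwards by infinitesimal induction over small extensions, and the inductive step (Lemma \ref{L:auto}, feeding into Proposition \ref{P:done}) explicitly uses the derivation statement of Corollary \ref{C:ders} \emph{over the residue field} to produce the correction term $\omega_k$. So deducing the field case of the Corollary from the Theorem over $k[\eps]$ is circular within the architecture of the proof: the very statement you want is an input to the argument that would give you the Theorem over $k[\eps]$. (The paper's proof of the general Corollary by passing to $P[\eps]$ is legitimate only because by then the Theorem has been proved for all Noetherian local rings.) What is needed here is a direct check that every degree-zero derivation of the graded matrix algebra is inner, and inner by a degree-zero element uniquely determined modulo scalars; this is exactly what the second half of the paper's proof of Proposition \ref{P:field} supplies, and it also falls out of your block decomposition (a derivation is inner by some $T\in\End(\bigoplus V_i)$ by the ungraded statement, and homogeneity of $[T,-]$ forces the nonzero-degree components of $T$ to vanish). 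With that paragraph added, your proof is complete.
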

\begin{proof}  
  The bounded derived category of $k$ is naturally identified with the
  category of $\Z$-graded finite $k$-modules by sending a complex to
  the direct sum of its cohomology spaces.  Given perfect complexes
  $P$ and $Q$, the algebra $\rend(P)$ (resp.\ $\rend(Q)$) is then
  identified with a matrix algebra, carrying the induced grading from
  the grading of the vector space $P$ (resp.\ $Q$), and an isomorphism
  from $\rend(P)\to\rend(Q)$ is identified with an isomorphism of
  matrix algebras which respects the gradings.  By the allowance of a
  shift, we may restrict our attention to graded spaces whose minimal
  non-zero graded piece is in degree 0; any reference in what follows
  to graded vector spaces will implicitly assume this hypothesis.
  (Of course the algebras involved will still carry
  non-trivial graded pieces with negative degrees.)

  Let $A$ be a graded matrix algebra of rank $n^2$ and $V$ and $W$ two
  graded $n$-dimensional vector spaces with non-trivial graded
  $A$-actions.  By the Skolem-Noether theorem, there is an $A$-linear
  isomorphism $\alpha:V\to W$.  We claim that $\alpha$ is graded.  To
  prove this, it suffices to show that given a non-zero vector $v\in
  V_0$, $\alpha(v)$ is in $W_0$ (because $V$ and $W$ are simple
  $A$-modules).  Write $\alpha(v)=\sum w_i$.  Since $V$ is a simple
  $A$- module, $A_n\cdot v = V_n$; a similar statement holds for $W$
  (given a choice of non-zero weight 0 vector, which exists by the
  hypothesis on the gradings).  Thus, the highest non-trivial grading
  $N$ of $A$ will equal the highest non- trivial grading of both $V$
  and $W$.  Furthermore, given any $i$ such that $w_i\neq 0$, the fact
  that $A_{-i}\cdot w_i = W_0$ means that $A_{-i}\neq 0$.  Given $i>0$
  such that $w_i\neq 0$, we have for all $\tau\in A_{-i}$ that
$$0=\alpha(0)=\alpha(\tau(v))=\tau(\alpha(v))=\tau\Big(\sum 
w_j\Big)=\tau(w_i)+\textrm{higher terms.}$$ Thus, $\tau(w_i)=0$, which
implies that $W_0=0$.  This contradicts the assertion that $W_0$ is
the minimal non-trivial graded piece.  So $w_i=0$ for all $i>0$ and
therefore $w\in W_0$.  Translating this back into the derived
language, we have proven that given an isomorphism
$\phi:\rend(P)\to\rend(Q)$, there is an isomorphism $P\to Q$ in
$\D(k)$ which induces $\phi$ by functoriality.  In fact, we have shown
the rest of the proposition as well, because $\alpha$ is the unique
choice for such an isomorphism up to scalars by the Skolem-Noether theorem.

To prove Corollary \ref{C:ders}, let $V=\oplus V_i$ be a graded vector space and
$T\in\End(V)$ a non-central linear transformation.  We wish to show
that if the (non-trivial) inner derivation by $T$ is homogeneous of
degree $0$ then $T$ is homogeneous of degree $0$.  To do this,
consider the restriction of $T$ to the degree $0$ part of $\End(V)$.
Let $T^{n}$ be a graded component of $T$ (so that
$T^{n}:V\to V$ shifts degrees by $n$).  Let $V^{m}$ be a graded
component such that the induced transformation
$T^{n}:V^{m}\to V^{m+n}$ is non-zero.  Consider the graded linear
transformation (of degree $0$) $S:V\to V$ which acts as the identity
on $V^m$ and annihilates every other component.  It is easy to see
that the commutator $[T^{n},S]$ is $T^{n}S$, which implies that
$[T,S]$ has a non-trivial component of degree $n$.  Since the
derivation $[T,-]$ is homogeneous of degree $0$, it follows that
$n=0$, and thus $T$ is homogeneous of degree $0$, as desired.
\end{proof}

\begin{lem}\label{L:completing} 
  Theorem \ref{T:the one} is true for $\ms O$ if it is true for
  $\widehat{\ms O}$.
\end{lem}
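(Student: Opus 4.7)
The plan is to descend Theorem \ref{T:the one} from $\widehat{\ms O}$ to $\ms O$ using the faithful flatness of $\ms O\to\widehat{\ms O}$. Let $\phi:\rend(P)\to\rend(Q)$ be the given isomorphism of weak algebras and let $n$ be the unique integer produced by the theorem applied to $\widehat{P}$, $\widehat{Q}$, and $\widehat{\phi}:=\phi\ltensor_{\ms O}\widehat{\ms O}$. Since derived base change preserves shifts, any integer $n'$ that works over $\ms O$ remains valid over $\widehat{\ms O}$ and so must equal $n$; uniqueness of $n$ is thereby immediate.

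For existence, I consider the submodule $L\subset\hom_{\D(\ms O)}(P,Q(n))$ cut out by the single equation $\psi\circ\mathrm{act}_{P} = \mathrm{act}_{Q}\circ(\phi\ltensor\psi)$ in $\hom_{\D(\ms O)}(\rend(P)\ltensor P,Q(n))$, which is the derived-categorical expression of the requirement that $\psi$ intertwine the natural actions of $\rend(P)$ and $\rend(Q)$ via $\phi$. Since $P$ and $Q$ are perfect, $L$ is the kernel of an $\ms O$-linear map between finitely generated $\ms O$-modules, hence finitely generated, annihilated by $\ann(P)$ (so an $\ms O_P$-module), and its formation is compatible with the flat base change to $\widehat{\ms O}$. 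Applying the hypothesis over $\widehat{\ms O}$ together with the torsor part of the statement, $\widehat L = L\otimes_{\ms O_P}\widehat{\ms O_P}$ is a free rank-one $\widehat{\ms O_P}$-module generated by an isomorphism $\widehat\psi$. Faithful flatness of the local ring map $\ms O_P\to\widehat{\ms O_P}$ (combined with the Noetherian hypothesis on $\ms O$) then forces $L$ itself to be free of rank one over $\ms O_P$; let $\psi_0$ be any generator.

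The base change $\widehat{\psi_0}$ generates $\widehat L$, so it differs from $\widehat\psi$ by a unit of $\widehat{\ms O_P}^{\times}$ and is in particular a quasi-isomorphism of perfect complexes. Since the cohomology of perfect complexes commutes with flat base change, $\psi_0$ is itself a quasi-isomorphism over $\ms O$, and it induces $\phi$ by the very definition of $L$. The fiber of $\isom(P,Q(n))\to\isom(\rend(P),\rend(Q))$ over $\phi$ is therefore the set of isomorphisms in $L=\ms O_P\cdot\psi_0$, namely $\ms O_P^{\times}\cdot\psi_0$, a torsor under $\ms O_P^{\times}$. When $P=Q$, the element $\id_P\in L$ lies in this fiber, pinning down $n=0$ by the uniqueness above and splitting the torsor. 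The main obstacle I anticipate is verifying that $L$ really is defined by an $\ms O$-linear equation whose formation commutes with base change; this rests on the identification $\rend(P)\simeq P\ltensor P^{\vee}$ from Lemma \ref{L:duality compatibility} and the perfection of $P$ and $Q$, after which the descent steps are standard faithfully flat descent for finitely presented modules over local rings and for perfect complexes.
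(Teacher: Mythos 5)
Your reduction agrees with the paper's: you rewrite the condition $u^{\ast}=\phi$ as $\rend(P)$-linearity of $u$ (with $\rend(P)$ acting on $Q(n)$ through $\phi$), and you observe that the module $L$ of such linear maps is the kernel of a map of finite $\ms O$-modules built from $\hom_{\D(\ms O)}$'s of perfect complexes, hence commutes with the flat base change $\ms O\to\widehat{\ms O}$. The difference is in how the completed case is exploited: the paper lifts an isomorphism from the residue field, using the surjectivity of $\hom_{\Xi}(M,N)\to\hom_{\Xi}(M,N)\tensor k$ together with Nakayama's lemma for perfect complexes, and treats the kernel statement separately; you instead try to prove in one stroke that $L$ is free of rank one over $\ms O_P$ by descending freeness from the completion.

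The gap is in the sentence asserting that, ``by the torsor part of the statement,'' $\widehat{L}$ is a free rank-one $\widehat{\ms O_P}$-module generated by an isomorphism $\widehat{\psi}$. Theorem \ref{T:the one} over $\widehat{\ms O}$ describes only the \emph{isomorphisms} in $\widehat{L}$: they form the set $\widehat{\ms O_P}^{\times}\cdot\widehat{\psi}$. It says nothing about non-invertible $\rend(\widehat{P})$-linear maps $\widehat{P}\to\widehat{Q}(n)$, so a priori $\widehat{L}$ could strictly contain $\widehat{\ms O_P}\cdot\widehat{\psi}$ (equivalently, $\End_{\rend(\widehat P)}(\widehat P)$ could be strictly larger than $\widehat{\ms O_P}$); note that you cannot see this at the residue field by tensoring $L$ with $k$, because the formation of the kernel defining $L$ does not commute with the non-flat base change $\widehat{\ms O}\to k$. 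To close the gap you must prove $\widehat{L}=\widehat{\ms O_P}\cdot\widehat{\psi}$ directly: given $f\in\widehat L$, its image in $\hom_{\D(k)}(P_k,Q_k(n))$ is $\rend(P_k)$-linear, hence equals $\lambda\cdot\widehat{\psi}_k$ for some $\lambda\in k$ by the field case (Proposition \ref{P:field}); the injectivity of $\hom_{\D(\widehat{\ms O})}(\widehat P,\widehat Q(n))\tensor k\to\hom_{\D(k)}(P_k,Q_k(n))$ for perfect complexes then gives $f=\tilde\lambda\widehat\psi+g$ with $\tilde\lambda$ a lift of $\lambda$ and $g\in\mf m\cdot\hom_{\D(\widehat{\ms O})}(\widehat P,\widehat Q(n))$ still $\rend(\widehat P)$-linear; finally $\widehat\psi+g$ is an isomorphism by Nakayama for perfect complexes, hence lies in $\widehat{\ms O_P}^{\times}\cdot\widehat\psi$ by the torsor part, so $g$, and therefore $f$, lies in $\widehat{\ms O_P}\cdot\widehat\psi$. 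Once this is supplied, the remaining steps of your descent (freeness of a finite module descends along $\ms O_P\to\widehat{\ms O_P}$; being a quasi-isomorphism descends by faithful flatness) are fine --- but observe that the repair uses exactly the residue-field-plus-Nakayama mechanism that constitutes the paper's own proof, so the two arguments end up essentially coinciding.
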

\begin{proof} 
  We proceed by reducing the problem to a question of linear algebra
  and then using the faithful flatness of completion.

  Suppose given $P$ and $Q$ and an isomorphism
  $\phi:\rend(P)\to\rend(Q)$; this defines an action of $A:=\rend(P)$
  on $Q$.  We claim that finding $u:P\to Q$ such that $\phi=u^{\ast}$
  is equivalent to finding an $A$-linear isomorphism from $P$ to $Q$.
  Indeed, suppose $u:P\to Q$ is $A$-linear, so that the diagram
$$\xymatrix{\rend(P)\ltensor P\ar[r]^{\phi\ltensor u}\ar[d] & 
  \rend(Q)\ltensor Q\ar[d]\\
  P\ar[r]^u & Q }$$ commutes, where the vertical arrows are the
actions.  Tensoring the left side with $P^{\vee}$ and the right side
with $Q^{\vee}$, we see that the resulting diagram
$$\xymatrix{\rend(P)\ltensor P\ltensor P^{\vee}\ar[rr]^{\phi\ltensor u\ltensor 
(u^{\vee})^{-1}}\ar[d] & & \rend(Q)\ltensor Q\ltensor Q^{\vee}\ar[d]\\
P\ltensor P^{\vee}\ar[rr]^{u\ltensor (u^{\vee})^{-1}} & & Q\ltensor Q^{\vee}
}$$
also commutes.  Applying Lemma \ref{L:duality compatibility}
and writing $B$ for $\rend(Q)$, we find that the diagram
$$\xymatrix{A\ltensor A\ar[r]^{u^{\ast}\ltensor\phi}\ar[d] & B\ltensor 
  B\ar[d]\\
  A\ar[r]^{u^{\ast}} & B }$$ commutes, where the vertical arrows are
the multiplication maps.  Considering the units in the algebras, one
readily concludes the proof of the claim.  Note that to conclude that
any such $u$ as above is an isomorphism, it suffices for its reduction
to the residue field to be an isomorphism (e.g.\ because the complexes
are bounded above).

It is easy to see (using the realization in terms of diagrams of
finite flat $\ms O$-modules) that $\hom_{\D(\ms O)}$ is compatible
with flat base change and completion when restricted to the category
of perfect complexes: given a flat ring extension $\ms O\to\ms O'$,
there is a natural isomorphism $$\hom_{\D(\ms O')}(M\ltensor\ms
O',N\ltensor\ms O')\cong\hom_{\D(\ms O)}(M,N)\tensor\ms O'$$ for all
perfect $M$ and $N$ in $\D(\ms O)$.  Furthermore, given a perfect weak
algebra $\Xi$, the realization of the module of $\Xi$-linear maps as a
kernel of maps of $\hom$-modules shows that the same statement is true
for $\hom_{\Xi}$.  Thus there is a commutative diagram
$$\xymatrix@C=0pt{\hom_{\Xi}(M,N)\ar[d]\ar[rr] & & 
  \hom_{\widehat{\Xi}}(\widehat M,\widehat N)\ar[d]\\
  \hom_{\Xi}(M,N)\tensor_{\ms O}k\ar@{=}[rr]\ar[dr] & &
  \hom_{\widehat{\Xi}}(\widehat M,\widehat N)\tensor_{\widehat{\ms
      O}}k\ar[dl]\\
  & \hom_{\Xi\ltensor k}(M\ltensor k,N\ltensor k).  }$$ with
surjective vertical arrows.  This immediately applies to our situation
to show that the map of Theorem \ref{T:the one} is surjective for $\ms O$
if it is for $\widehat{\ms O}$ (for a fixed $n$, which may be determined
from the reduction to the residue field).  Indeed, a
$\widehat{\Xi}$-linear map $\widehat M\to\widehat N$ yields an element
of $\hom_{\widehat{\Xi}}(\widehat M,\widehat N)\tensor k$ whose image
in the bottom module is an isomorphism.  It follows from the diagram
that there is a $\Xi$-linear map $u:M\to N$ whose (derived) reduction
to $k$ is an isomorphism, whence $u$ is an isomorphism by Nakayama's
lemma for perfect complexes (see e.g.\ Lemma 2.1.3 of \cite{mod-of-comp}).

Similarly, to verify that an isomorphism $\xi:P\simto P$ in the kernel
of the automorphism map is homotopic to a constant, it suffices to
show that an element $\xi\in\End_{\D(\ms O)}(P)$ is in $\ms O_P$ if
and only if this is true after completing.  But the module of maps
homotopic to a constant is also clearly compatible with flat base
change and completion is moreover {\it faithfully\/} flat (all modules
involved are finite over $\ms O$ because the complexes involved are
perfect), so $\xi$ is in a submodule $Z$ of $\End(P)$ if and only if
its image in $\End(P)\tensor\widehat{\ms O}$ is contained in
$Z\tensor\widehat{\ms O}$.
\end{proof}

From this point onward, {\it we assume that $\ms O$ is a complete
  local Noetherian ring\/}.  Recall that a quotient of local rings
$0\to I\to \ms O\to\widebar{\ms O}\to 0$ is {\it small\/} if $I$ is
generated by an element $\eps$ which is annihilated by the maximal
ideal of $\ms O$ (so that, in particular, $\eps^2=0$).  We can choose
a filtration $\ms O\supset\mf m=I_0\supset I_1\supset
I_2\supset\cdots$ which is separated (i.e., so that $\cap_{i}
I_{i}=0$) and defines a topology equivalent to the $\mf m$-adic
topology such that for all $i\geq 0$, the quotient $0\to
I_i/I_{i+1}\to\ms O/I_{i+1}\to\ms O/ I_i\to 0$ is a small extension,
with $I_i/I_{i+1}$ generated by $\eps_i$.  We fix such a filtration
for remainder of this section, and we denote $\ms O/I_n$ by $\ms O_n$.

\begin{lem}\label{L:algebra quotient} 
  Let $0\to I\to R\to\widebar{R}\to 0$ be a surjection of rings.  Let
  $A$ be a weak $R$-algebra and $P$ and $Q$ two left $A$-modules.  Let
  $T$ denote the triangle in $\D(R)$ arising from the quotient map
  $R\to \widebar{R}$.
\begin{enumerate}
\item[$(i)$] The maps in $P\ltensor T$ are $A$-linear $($with the
  natural $A$-module structures$)$.
\item[$(ii)$] Any $A$-linear map $\psi:P\to Q\ltensor\widebar{R}$
  factors through an $A$-linear map $\widebar{\psi}:
  P\ltensor\widebar{R}\to Q\ltensor\widebar{R}$ which is the derived
  restriction of scalars of an $A\ltensor\widebar{R}$-linear map from
  $P\ltensor\widebar{R}$ to $Q\ltensor\widebar{R}$.
\item[$(iii)$] If $R\to\widebar{R}$ is a small extension of local rings with 
residue field $k$, then the
natural identification $P\ltensor I\simto P_k$ induced by a choice of 
basis for $I$ over $k$ is $A$-linear.
\end{enumerate}
\end{lem}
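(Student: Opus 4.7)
The plan is to reduce all three statements to the basic observation that for any left $A$-module $M$ and any $N \in \D(R)$, the tensor product $M \ltensor_R N$ carries a natural $A$-module structure coming from the first factor, and that this construction is functorial in $N$. Every claim then becomes either a direct application of functoriality or an instance of the derived restriction--extension adjunction.

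For part (i), I would apply this observation to the distinguished triangle $T : I \to R \to \widebar{R} \to I[1]$ in $\D(R)$. Each arrow in $P \ltensor T$ is the image under the functor $P \ltensor (-)$ of a morphism in $\D(R)$, so it is automatically $A$-linear with respect to the natural $A$-module structure on each vertex; no further work is needed.

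For part (ii), the key is the derived restriction--extension adjunction along the map of weak algebras $A \to A \ltensor \widebar{R}$. The target $Q \ltensor \widebar{R}$ carries a canonical $A \ltensor \widebar{R}$-module structure (elements of $A$ coming from $I \subset R$ act trivially after base change to $\widebar{R}$), so an $A$-linear map $\psi : P \to Q \ltensor \widebar{R}$ should correspond by adjunction to an $A \ltensor \widebar{R}$-linear map $\widebar{\psi} : P \ltensor \widebar{R} \to Q \ltensor \widebar{R}$, and the stated factorization is precisely the unit of the adjunction. The principal obstacle is that $A$ is only a weak algebra, so the adjunction must be set up with some care; I would handle this by choosing a dg-model for $A$ (for instance, when $A = \rend(K)$, the honest endomorphism dg-algebra of the underlying complex of $K$), resolving $P$ as a bounded-above complex of free dg-$A$-modules, invoking the classical chain-level adjunction, and then descending to $\D$. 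Uniqueness of $\widebar{\psi}$ also follows from the adjunction.

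For part (iii), a small extension has $I = R \cdot \eps$ with $\mf m \eps = 0$, so the $R$-module map $R \to I$ sending $1 \mapsto \eps$ descends to an $R$-module isomorphism $k \simto I$. Tensoring with $P$ and invoking part (i) for this isomorphism in $\D(R)$ yields the required $A$-linear identification $P_k = P \ltensor_R k \simto P \ltensor_R I$, which depends on the chosen basis exactly as in the statement.
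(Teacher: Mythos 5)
Your proposal is correct and follows essentially the same route as the paper: part (i) is the functoriality of $P\ltensor(\cdot)$ into $A$-modules, and parts (ii) and (iii) come down to representing everything by complexes of projectives and factoring at the chain level, which is exactly what your appeal to the restriction--extension adjunction unwinds to (the paper likewise notes that $\widebar{\psi}$ is unique only as a map coming from $\D(\widebar R)$, not as a map in $\D(R)$). The only superfluous step is the worry about a dg-model for $A$: since all identities are only required to hold in the derived category, chain-level representatives of the action and of $\psi$ suffice, with no strict dg-algebra structure needed.
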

\begin{proof} 
  Note that basic results about homotopy colimits allow us to replace
  any object of $\D(R)$ by a complex of projectives, so there are no
  boundedness conditions on any of the complexes involved.  Part (i)
  follows immediately from the fact that $P\ltensor(\cdot)$ is a
  functor from $\D(R)$ to $A$-modules.  Part (ii) follows from writing
  $P$ and $A$ as complexes of projectives and representing the map
  $P\to Q\ltensor\widebar{R}$ as a map on complexes.  (Note that this
  factorization need not be unique as a map in $\D(R)$, but it is
  unique as the derived restriction of scalars from a map in
  $\D(\widebar R)$.)  Part (iii) follows similarly from looking at
  explicit representatives of $P$ and $A$.
\end{proof}

\begin{lem}\label{L:homotopy} 
  Suppose $f,g:P\to Q$ are two maps of perfect complexes in $K(\ms
  O)$.  Let $P_n=P\tensor\ms O_n$, $Q_n=Q\tensor\ms O_n$,
  $f_n=f\tensor\ms O_n$, $g_n=g\tensor\ms O_n$.  Suppose there are
  homotopies
$$h(n)\in\prod_t\hom(P^t,Q^{t-1}\tensor I_n)$$ such that for all $n$,
$$f_{n}-g_{n}=d\Big(\sum_{s<n}\bar h(s)\Big)+\Big(\sum_{s<n}\bar h(s)\Big)d$$
as maps of complexes, where $\bar h$ denotes the induced map.  Then
$f$ is homotopic to $g$.
\end{lem}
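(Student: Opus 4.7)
The plan is to promote the incremental homotopies $h(s)$ to a single genuine homotopy by summing them in the $\mf m$-adic topology.

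Since $P$ and $Q$ are perfect with chosen representatives as bounded complexes of finite free $\ms O$-modules, each $\hom(P^t,Q^{t-1})$ is finite free, and so is the finite product $M:=\prod_t\hom(P^t,Q^{t-1})$. The hypothesis that $h(s)$ factors through $Q^{t-1}\tensor I_s$ identifies it with an element of $I_s\cdot M$ via the natural inclusion $Q^{t-1}\tensor I_s\hookrightarrow Q^{t-1}$; here I use that $\hom(P^t,Q^{t-1}\tensor I_s)=I_s\cdot\hom(P^t,Q^{t-1})$, which is valid because $P^t$ is finite free. Since $\{I_s\}$ defines a topology on $\ms O$ equivalent to the $\mf m$-adic topology and $\ms O$ is complete, the finite module $M$ is complete in the induced topology, so the series
\[
H:=\sum_{s\geq 0}h(s)
\]
converges to an element of $M$, and $H-\sum_{s<n}h(s)\in I_n M$ for every $n$.

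Next I would verify that $H$ is the desired homotopy. Set $\Delta:=(f-g)-(dH+Hd)\in M$; here termwise application of $d$ commutes with the infinite sum because $d$ is $\ms O$-linear, hence continuous for the filtration topology on $M$. The reduction of $H$ modulo $I_n$ is $\sum_{s<n}\bar h(s)$, so by hypothesis $f_n-g_n=d\bar H_n+\bar H_n d$ as maps of $\ms O_n$-complexes. Hence the image of $\Delta$ in $M/I_n M$ vanishes for every $n$. Since the filtration is separated, $\bigcap_n I_n M=0$, so $\Delta=0$ and $f-g=dH+Hd$, as desired.

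There is no real obstacle here: the argument is a pure completeness argument, and all the work has in effect been done in stating the hypotheses. The only things to watch are the identification $\hom(P^t,Q^{t-1}\tensor I_s)=I_s\cdot\hom(P^t,Q^{t-1})$ and the continuity of $d$ for the filtration topology on $M$, both of which are routine given the freeness of the $P^t$ and the boundedness of the complexes.
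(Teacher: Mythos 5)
Your argument is correct and is exactly the paper's approach: the paper's entire proof is the one-line assertion that $h=\sum_{s\ge 0}h(s)$ converges and defines the homotopy, and you have simply supplied the supporting details (finiteness and completeness of $\prod_t\hom(P^t,Q^{t-1})$, continuity of $d$, and separatedness of the filtration to pass from congruences mod $I_n$ to an exact identity). Nothing to correct.
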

\begin{proof} The element $h=\sum_{s=0}^{\infty} h(s)$ converges and defines 
the homotopy.
\end{proof}

\begin{lem}\label{L:auto} 
  Let $0\to I\to R\to\widebar{R}\to 0$ be a small extension of local
  rings with residue field $k$.  Let $P$ and $Q$ be perfect complexes
  of $R$-modules $($with chosen realizations$)$ and
  $\phi:\rend(P)\to\rend(Q)$ an isomorphism of the derived
  endomorphism algebras, written as a map in that direction on the
  underlying complexes.  If there exists an isomorphism of the
  underlying complexes $\widebar u:\widebar P\simto\widebar Q$ such
  that $\widebar{\phi}={\widebar u}^{\ast}$ as maps of complexes, then
  there is a lift $u$ of $\widebar u$ and a homotopy $h$ between
  $\phi$ and $u^{\ast}$ such that $h(\rend(P))\subset \rend(Q)\tensor
  I$.  In particular, $\phi=u^{\ast}$ in $\D(R)$.
\end{lem}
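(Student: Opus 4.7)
The plan is to lift $\bar u$ to an arbitrary chain map $u_0:P\to Q$, analyze the discrepancy $\phi-u_0^*$, and then adjust $u_0$ by an infinitesimal automorphism of $P$ to kill that discrepancy up to a homotopy taking values in $\rend(Q)\otimes I$. Because $P$ is a bounded complex of finite free $R$-modules, some chain lift $u_0$ of $\bar u$ exists, and it is automatically an isomorphism of complexes by Nakayama.

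First I would introduce $D:=\phi-u_0^*:\rend(P)\to\rend(Q)$. Since $\bar D=\bar\phi-\bar{u_0^*}=0$ and $\rend(Q)=Q\ltensor Q^\vee$ is $R$-flat, $D$ factors through $\rend(Q)\otimes I$. Expanding $\phi(ab)=(u_0^*(a)+D(a))(u_0^*(b)+D(b))$ and invoking $I^2=0$ yields $D(ab)=u_0^*(a)D(b)+D(a)u_0^*(b)$, so $D$ is a chain derivation from $\rend(P)$ into the bimodule $\rend(Q)\otimes I$ (with $\rend(P)$ acting via $u_0^*$). Transporting by the algebra isomorphism $u_0^*$, $E:=(u_0^*)^{-1}\circ D$ is a chain derivation $\rend(P)\to\rend(P)\otimes I$.

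Next, using Lemma \ref{L:algebra quotient}(iii) I identify $\rend(P)\otimes I\cong\rend(P_k)$ by a generator of $I$; since the target is annihilated by $\mf m$, $E$ descends to a $k$-linear chain derivation $\bar E:\rend(P_k)\to\rend(P_k)$. By Corollary \ref{C:ders} over $k$, the class of $\bar E$ in $\Der(\rend(P_k))$ is inner, coming from some $\beta\in\End_{\D(k)}(P_k)=H^0(\rend(P_k))$. Choosing a cycle representative $\tilde\beta\in\rend(P_k)^0$ (i.e.\ an honest chain map $P_k\to P_k$) for $\beta$, there is a degree-$(-1)$ chain homotopy $h_0$ with $\bar E-[\tilde\beta,-]=dh_0+h_0 d$.

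Finally I would lift $\tilde\beta$ to an $R$-linear chain map $\gamma:P\to P$ with image in $I\cdot P$, and lift $h_0$ to an $R$-linear $H:\rend(P)\to\rend(P)\otimes I$ of degree $-1$; both lifts exist by projectivity of the components of $P$ together with the chain-level compatibility of $P\otimes I\cong P_k$. Setting $u:=u_0\circ(\id+\gamma)$ gives a chain isomorphism lifting $\bar u$ with inverse $(\id-\gamma)\circ u_0^{-1}$, since $\gamma^2=0$. A direct computation with $\rend(P)\cong P\ltensor P^\vee$ and $u^*=u\ltensor(u^\vee)^{-1}$, discarding the $I^2$ term, yields $u^*=u_0^*\circ(\id+[\gamma,-])$, so $\phi-u^*=u_0^*\circ(E-[\gamma,-])=u_0^*\circ(dH+Hd)=d(u_0^*H)+(u_0^*H)d$, and $h:=u_0^*\circ H$ is the required homotopy, automatically landing in $\rend(Q)\otimes I$. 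The main obstacles are the cycle-level upgrade of the field-case input (choosing $\tilde\beta$ to be an actual chain map so that $u$ is a chain isomorphism, rather than only a derived-category witness) and the bookkeeping needed to verify the formula $u^*=u_0^*\circ(\id+[\gamma,-])$ through the tensor expression without sign errors; everything else reduces to routine lifting of $R$-module maps and Nakayama.
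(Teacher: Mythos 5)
Your steps (2) and (3) --- identifying the discrepancy $\phi-u_0^{\ast}$ as a derivation valued in $\rend(Q)\tensor I\cong\rend(Q_k)$, invoking the field case to see it is inner up to homotopy, and correcting by $1+\gamma$ --- track the paper's proof closely. But your very first step has a genuine gap: a chain-map lift $u_0:P\to Q$ of $\widebar u$ does \emph{not} exist merely because the terms of $P$ are finite free. Lifting each component $\widebar u^i$ arbitrarily to $u_0^i:P^i\to Q^i$ produces maps whose failure to commute with the differentials, $du_0-u_0d$, is a degree-one cocycle with values in $Q\tensor I\cong Q_k$, and the obstruction to correcting the $u_0^i$ into an honest chain map is its class in $\hom_{\D(k)}(P_k,Q_k(1))$ --- over a field this is a product of $\hom$'s between cohomology spaces of $P_k$ and $Q_k$ in adjacent degrees, which is generally nonzero once the complexes are not concentrated in a single degree. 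A concrete failure: for $R=k[\eps]/(\eps^2)$ and $P=Q=[R\xto{\eps}R]$, the chain automorphism of $\widebar P=[k\xto{0}k]$ acting by $1$ in degree $0$ and by $2$ in degree $1$ admits no chain-level lift to $P$, since any chain endomorphism of $P$ must have its two components congruent mod $\eps$.

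This is exactly the point where the hypothesis $\widebar\phi=\widebar u^{\ast}$ must be used, and the paper's proof devotes its first half to it: letting $A=\rend(P)$ act on $Q$ through $\phi$, the obstruction class, viewed as a map $\alpha:P_k\to Q_k(1)$, is $A_k$-linear by Lemma \ref{L:algebra quotient}, so Proposition \ref{P:field} forces it to be either zero or an isomorphism; since $P_k\cong Q_k\neq 0$, an isomorphism onto a nontrivial shift of $Q_k$ is impossible, whence $\alpha=0$ and the lift $u_0$ exists. You need to restore this argument before anything else in your proposal can start. Once $u_0$ is in hand, the remainder of your computation reproduces the paper's, with the minor caveat that $\phi$ is multiplicative only up to homotopy, so $D$ and $E$ are derivations in the derived-category sense of Corollary \ref{C:ders} rather than strict chain derivations; the bookkeeping of the resulting homotopies (all valued in $\rend(Q)\tensor I$) still goes through.
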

\begin{proof} 
  Let $A=\rend(P)$ and let $A$ act on $Q$ via $\phi$.  The
  identification of $\widebar{\phi}$ with ${\widebar u}^{\ast}$
  provides an $\widebar{A}$-linear isomorphism $\widebar{\gamma}:
  \widebar P\to\widebar{Q}$, and we wish to lift this to an $A$-linear
  isomorphism $P\to Q$.  Consider the composition $P\to \widebar{Q}\to
  Q\tensor I(1)\cong Q_k(1)$ in the derived category.  By Lemma
  \ref{L:algebra quotient}, this map is $A$-linear and factors through
  an $A$-linear map $\alpha:P_k\to Q_k(1)$ which comes by derived
  restriction of scalars from an $A_k$-linear map in $\D(k)$.  By
  Proposition \ref{P:field} (and the method of its proof), we see that
  $\alpha$ is either zero or an isomorphism.  But $P_k\cong
  Q_k\not\cong 0$, which implies that $\alpha=0$.  This means that
  there is an {\it $R$-linear\/} lift $\gamma$ of $\widebar{\gamma}$.
  Now $(\gamma^{\ast})^{-1}\circ\phi-\id$ is identified with a map
  $\rend_k(P_k)\to\rend_k(P_k)$ in $\D(k)$ which is a derivation of
  the algebra, hence is homotopic to the inner derivation induced by a
  map $\omega_k:P_k\to P_k$ in $\D(k)$.  Writing $\omega$ for the
  composition
$$\xymatrix{P\ar[r] &
P_k\ar[r]^{\omega_k}& P_k\ar[r]^{\cong}& P\ltensor I\ar[r]& 
P,}$$
we see that there 
is a homotopy between
$\phi$ and $\gamma(1+\omega)^{\ast}$ with image in $\rend(Q)\tensor I$, and 
that $\gamma(1+\omega)$ is a lift of $\widebar{\gamma}$ as maps of complexes. 
\end{proof}

\begin{lem}\label{L:kernel} 
  Let $0\to I\to R\to\widebar{R}\to 0$ be a small extension of local
  rings with residue field $k$.  Let $P$ be a perfect complex of
  $R$-modules $($with a chosen realization$)$ and $\psi:P\to P$ an
  automorphism of the underlying complex such that
  $\widebar{\psi}=\widebar{\alpha}$ for some
  $\widebar{\alpha}\in\widebar R_{\widebar P}$ as maps of the complex
  $\widebar P$ and such that $\psi^{\ast}$ is homotopic to the
  identity as a map of weak algebras.  Then there is a unit $\alpha$
  lifting $\widebar{\alpha}$ and a homotopy $h$ between $\psi$ and
  $\alpha$ such that $h(\rend(P))\subset \rend(P)\tensor I$.
\end{lem}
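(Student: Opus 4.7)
The plan is to mimic Lemma \ref{L:auto} infinitesimally: starting from an arbitrary lift of $\widebar\alpha$, measure the obstruction to $\psi$ being homotopic to that lift as an inner derivation on $\rend_k(P_k)$, invoke the field case (Proposition \ref{P:field} and Corollary \ref{C:ders}) to show this derivation comes from a scalar, and absorb that scalar into the lift. This parallels the role played by the $(1+\omega)$ correction in the proof of Lemma \ref{L:auto}.

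First, since $\psi$ is an automorphism of the underlying complex, $\widebar\psi$ is invertible on $\widebar P$, and so $\widebar\alpha$ is a unit in $\widebar R_{\widebar P}$. Because $R$, $\widebar R$, and $\widebar R_{\widebar P}$ all have residue field $k$, we may lift $\widebar\alpha$ successively to a unit $\alpha_0 \in R^{\times}$. Set $\nu := \psi - \alpha_0$, a chain endomorphism of $P$. By hypothesis $\widebar\nu = 0$ as a map of complexes, so $\nu$ takes values in $\eps P = IP$ for a chosen generator $\eps$ of $I$, and factors as
\[
  P \xrightarrow{\pi} P_k \xrightarrow{\widebar\nu} P_k \xrightarrow{\eps\cdot} P,
\]
where $\widebar\nu$ is a well-defined chain map between complexes of $k$-modules (cf.\ Lemma \ref{L:algebra quotient}(iii)).

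The key computation uses $I^2=0$: expanding $\psi T \psi^{-1} - T$ with $\psi = \alpha_0(1 + \alpha_0^{-1}\nu)$ and recalling that $\alpha_0$ is central yields, modulo $I^2$, the inner derivation $T \mapsto [\alpha_0^{-1}\nu, T]$; its image lies in $\eps\rend(P)$, which for $P$ perfect is naturally identified with $\rend_k(P_k)\otimes_k I$. Thus $\psi^{\ast} - \id$ is identified with the inner derivation $[\widebar\alpha^{-1}\widebar\nu, -]$ on $\rend_k(P_k)$. The hypothesis that $\psi^{\ast}$ is homotopic to $\id$ as weak algebra maps forces this derivation to vanish in $\D(k)$; by Corollary \ref{C:ders} applied over the field $k$ (i.e., by Proposition \ref{P:field}), there is a scalar $c \in k_{P_k}$ and a chain null-homotopy in $\D(k)$ between $\widebar\nu$ and $c$.

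To conclude, lift $c$ to $\widetilde c \in R$ and set $\alpha := \alpha_0 + \widetilde c\,\eps$, a unit in $R$ lifting $\widebar\alpha$. Then $\psi - \alpha = \nu - \widetilde c\,\eps$ corresponds under the factorization above to $\widebar\nu - c : P_k \to P_k$, which is null-homotopic in $\D(k)$. Composing such a chain null-homotopy with the reduction $\pi$ and the inclusion $\eps\cdot : P_k \hookrightarrow P$ produces a strict chain homotopy $h : P \to P[-1]$ between $\psi$ and $\alpha$ whose image is contained in $\eps P = P\otimes I$, which then induces a homotopy on $\rend(P)$ landing in $\rend(P)\otimes I$, as required. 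The chief subtlety, exactly as in the proof of Lemma \ref{L:auto}, is ensuring that the derivation identification and the lift of the null-homotopy can be performed at the chain level with respect to the chosen realizations, rather than merely in $\D(R)$; the mechanism is the same one used there.
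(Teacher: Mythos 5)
Your overall strategy is the one the paper intends: lift $\widebar{\alpha}$ arbitrarily, factor $\nu=\psi-\alpha_0$ through $P_k$, recognize $\psi^{\ast}-\id$ as the inner derivation $[\alpha_0^{-1}\nu,-]$ using $I^2=0$, apply the left half of the exact sequence of Corollary \ref{C:ders} over the residue field to see that $\widebar{\nu}$ is a scalar up to homotopy, and absorb that scalar into $\alpha$. The bookkeeping at the beginning and the end (that $\widebar{\alpha}$ lifts to a unit, and that a $k$-null-homotopy of $\widebar{\nu}-c$ composed with $\pi$ and $\eps\cdot$ gives the required homotopy with image in $I$) is fine.

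The gap is the sentence asserting that the hypothesis ``$\psi^{\ast}$ is homotopic to $\id$'' forces the derivation $[\widebar{\alpha}^{-1}\widebar{\nu},-]$ to vanish in $\D(k)$. What you know is that the chain map $F:=\psi^{\ast}-\id$, which factors as $\rend(P)\xrightarrow{\pi}\rend(P)_k\xrightarrow{D}\rend(P)_k\cong I\rend(P)\inj\rend(P)$, is null-homotopic over $R$; what you need is that $D$ is null-homotopic over $k$. This is not a formal consequence: the given homotopy over $R$ need not have image in $I\rend(P)$, and since $I\rend(P)\subset\mf m\rend(P)$ its reduction modulo $\mf m$ carries no information. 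Indeed the implication fails for general chain maps --- over $R=k[\eps]/(\eps^2)$ with $E$ the complex $R\xrightarrow{\eps}R$, multiplication by $\eps$ on $E$ factors as $\iota\circ\id_{E_k}\circ\pi$ and is null-homotopic (take $h=\id$ in degree one), yet $\id_{E_k}$ is not null-homotopic since $E_k$ is not acyclic. Equivalently, the kernel of $\hom_{\D(R)}(\rend(P),\rend(P)\ltensor I)\to\hom_{\D(R)}(\rend(P),\rend(P))$ is the image of $\ext^{-1}_{\widebar{R}}(\rend(\widebar{P}),\rend(\widebar{P}))$ under the boundary map, which need not vanish. So some special feature of your $D$ must be invoked, and the paper's mechanism (visible in the proofs of Lemmas \ref{L:completing} and \ref{L:auto}) is module-theoretic rather than derivation-theoretic at this step: $\psi^{\ast}\simeq\id$ says precisely that $\psi$, and hence $\nu$, is an $\rend(P)$-linear endomorphism of $P$ in $\D(R)$ for the standard action; by Lemma \ref{L:algebra quotient} the induced map $P_k\to P_k\cong P\ltensor I$ is then the restriction of scalars of an $\rend(P)_k$-linear map in $\D(k)$; and $\End_{\rend(P)_k}(P_k)=k_{P_k}$ by Proposition \ref{P:field}, which is exactly the left half of Corollary \ref{C:ders} over $k$ since $\rend(P)_k$-linearity of $T$ is the vanishing of $[T,-]$. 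With that descent argument in place of the unjustified assertion, your construction of $\alpha$ and of the homotopy goes through unchanged.
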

\begin{proof} 
  The proof is quite similar to the proof of Lemma \ref{L:auto}, using
  the left half of the exact sequence of Corollary \ref{C:ders} rather
  than the right half.
\end{proof}

\begin{prop}\label{P:done}  
  Theorem \ref{T:the one} holds for $\ms O$ $($now assumed
  complete$)$.
\end{prop}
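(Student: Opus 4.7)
The plan is to carry out infinitesimal induction along the fixed filtration $\ms O \supset \mf m = I_0 \supset I_1 \supset \cdots$, using Proposition \ref{P:field} as the base case, Lemma \ref{L:auto} as the inductive step, and Lemma \ref{L:homotopy} to assemble the infinitely many homotopies produced along the way into a single homotopy over the complete ring $\ms O$. Completeness will also be used to recover a map $u : P \to Q$ of underlying complexes from its compatible family of reductions modulo the $I_m$.

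To start, I would reduce $\phi : \rend(P) \to \rend(Q)$ modulo $\mf m$ and apply Proposition \ref{P:field} to produce both the unique shift $n$ and a base-case isomorphism $u_0 : P_k \to Q_k(n)$ of underlying complexes with $u_0^\ast = \widebar\phi$. After replacing $Q$ by $Q(n)$ I may assume $n = 0$. Inductively, at stage $m \geq 1$, Lemma \ref{L:auto} applied to the small extension $\ms O_m \to \ms O_{m-1}$ produces an isomorphism $u_m : P_m \to Q_m$ of underlying complexes lifting $u_{m-1}$, together with a homotopy $h(m)$ between $\phi_m$ and $u_m^\ast$ whose values lie in $\rend(Q_m) \otimes (I_{m-1}/I_m)$. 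By completeness of $\ms O$ and perfection of $P, Q$ the $u_m$ assemble into a single map $u : P \to Q$, and Lemma \ref{L:homotopy} glues the $h(m)$ into a global homotopy witnessing $\phi = u^\ast$ in $\D(\ms O)$, giving surjectivity of $\isom(P, Q(n)) \to \isom(\rend(P), \rend(Q))$.

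For the torsor structure on the fibers, and for the split-torsor claim when $P = Q$, I would run the parallel induction using Lemma \ref{L:kernel} in place of Lemma \ref{L:auto}: any $\psi$ in the kernel of the map to $\isom(\rend(P), \rend(Q))$ satisfies the hypothesis of Lemma \ref{L:kernel} at each stage, yielding compatible units $\alpha_m \in \ms O_m^\times$ homotopic to $\psi_m$ via homotopies with image in $\rend(P_m) \otimes (I_{m-1}/I_m)$; these assemble by completeness and Lemma \ref{L:homotopy} into a unit $\alpha \in \ms O_P^\times$ homotopic to $\psi$. When $P = Q$, the inclusion $\ms O_P^\times \hookrightarrow \End(P)$ by scalar multiplication provides the canonical splitting of the kernel as an $\ms O_P^\times$-torsor.

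The main obstacle I expect is the mismatch between the ``equality as maps of complexes'' hypothesis required by Lemmas \ref{L:auto} and \ref{L:kernel} and the weaker ``homotopic'' output that those same lemmas deliver: at stage $m$ one needs $\widebar{\phi_m} = u_{m-1}^\ast$ on the nose, while the previous stage only furnished a homotopy. This forces one to replace $\phi$ at each step by a homotopic representative that absorbs the accumulated stage-$(m{-}1)$ homotopy, which is legitimate because these modifications take values in the progressively smaller ideals $I_{m-1}/I_m$; this is exactly the control guaranteed by Lemma \ref{L:auto} and is precisely what is needed for the terminal appeal to Lemma \ref{L:homotopy}.
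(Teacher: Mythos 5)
Your proposal is correct and follows essentially the same route as the paper: base case from Proposition \ref{P:field}, iterated application of Lemma \ref{L:auto} (resp.\ Lemma \ref{L:kernel} for the kernel), and assembly of the homotopies via Lemma \ref{L:homotopy}. In particular, the ``main obstacle'' you identify—absorbing each stage's lifted homotopy into $\phi$ so that the on-the-nose equality of complexes required by Lemma \ref{L:auto} holds at the next stage—is exactly how the paper proceeds (its step $(\phi-(dh(0)+h(0)d))_1=\psi_1^{\ast}$).
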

\begin{proof} 
  Given an isomorphism $\phi:\rend(P)\simto\rend(Q)$, we may assume
  after adding zero complexes to $P$ and $Q$, shifting $Q$, and
  applying a homotopy to $\phi$, that there is an isomorphism
  $\psi_0:P_0\to Q_0$ such that $\phi_0=\psi_0^{\ast}$ as maps of
  complexes.  We can now apply Lemma \ref{L:auto} to arrive at an
  isomorphism $\psi_1$ lifting $\psi_0$ and a homotopy $\widebar h(0)$
  with image in $\rend_{\ms O_1}(Q_1)\tensor_{\ms O_1}I_0/I_1$ between
  $\phi_1$ and $\psi_1^{\ast}$.  Lift $\widebar h(0)$ to a homotopy
  $h(0)$ with image in $\rend(Q)\tensor I_0$.  Then $(\phi-
  (dh(0)+h(0)d))_1=\psi_1^{\ast}$ as maps of complexes, and we may
  find a homotopy $h(1)$, etc.  By Lemma \ref{L:homotopy}, we see that
  there is an isomorphism $\psi:P\to Q$ such that $\phi=\psi^{\ast}$
  in $\D(\ms O)$.  A similar argument shows that the kernel is $\ms
  O_P^{\ast}$.
\end{proof}

\subsection{The construction of $\GAz$}\label{S:gen azumaya construction}
In this section, we define a stack which we will use to compactify the
stack of Azumaya algebras.  While the definition is rather technical
in general, in the case of a relative surface it assumes a simpler and
more intuitive form.  

Let $(X,\ms O)$ be a ringed topos.

\begin{defn}  
  A \emph{pre-generalized Azumaya algebra\/} on $X$ is a perfect
  algebra object $\ms A$ of the derived category $\D(X)$ of
  $\ms O_{X}$-modules such that there exists an object $U\in X$ covering
  the final object and a totally supported perfect sheaf $\ms F$ on
  $U$ with $\ms A|_{U}\cong\rsend_{U}(\ms F)$ as weak algebras.  An
  isomorphism of pre-generalized Azumaya algebras is an isomorphism in
  the category of weak algebras.
\end{defn}

\subsubsection{Stackification}
\label{sec:stackification}

  Consider the fibered category $\PR\to Sch_{\etale}$ of
  pre-generalized Azumaya algebras on the large \'etale topos over
  $\spec\Z$.  We will stackify this to yield the stack of generalized
  Azumaya algebras.  This is slightly different from the construction
  given in \cite[3.2]{l-mb}, as we do not assume that the
  fibered category is a pre-stack.

\begin{lem}\label{L:stackification} 
  Suppose $T$ is a topos and $\ms C\to T$ is a category fibered in
  groupoids.  There exists a stack $\ms C^{s}$, unique up to
  1-isomorphism, and a 1-morphism $\ms C\to\ms C^{s}$ which is
  universal among 1-morphisms to stacks (up to 2-isomorphism).
\end{lem}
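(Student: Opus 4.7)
The plan is to carry out the classical two-step stackification construction, adapted to work for an arbitrary fibered category in groupoids over a topos (not assumed to be a prestack). Step one produces a prestack $\ms C'$ from $\ms C$ by sheafifying the Isom-presheaves; step two produces the stack $\ms C^s$ from $\ms C'$ by formally adjoining objects that arise as descent data. Uniqueness of $\ms C^s$ up to 1-isomorphism is automatic once the universal property is established, so the substance of the lemma lies in the two constructions and in the extension of 1-morphisms to stacks.

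For step one, I would let $\ms C'$ have the same objects as $\ms C$ over each $U \in T$, but enlarge the Hom-sets: for $x, y \in \ms C(U)$, define $\hom_{\ms C'(U)}(x,y)$ to be the global sections over $U$ of the sheafification of the presheaf $(V \to U) \mapsto \hom_{\ms C(V)}(x|_V, y|_V)$. Composition extends by the universal property of sheafification, and pullbacks are induced from those in $\ms C$ by functoriality of sheafification with respect to the localization morphisms of $T$. The canonical 1-morphism $\ms C \to \ms C'$ is then universal among 1-morphisms to fibered categories whose Isom-presheaves are sheaves (``prestacks''), reducing to the universal property of presheaf sheafification applied Isom by Isom.

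For step two, declare an object of $\ms C^s(U)$ to be a descent datum $(\{U_i \to U\}, (x_i), (\phi_{ij}))$ consisting of a covering, objects $x_i \in \ms C'(U_i)$, and isomorphisms $\phi_{ij} \colon x_i|_{U_{ij}} \simto x_j|_{U_{ij}}$ satisfying the cocycle condition. Morphisms between two such data are defined on a common refinement of the two coverings, using the sheaf property of Isoms in $\ms C'$ to verify independence of the refinement. That $\ms C^s$ is a stack amounts to the facts that Isoms between descent data form a sheaf (immediate from the sheaf property in $\ms C'$) and that descent data for $\ms C^s$ over a cover are themselves effective (a descent datum of descent data flattens to a descent datum on a common refined cover). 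The universal property then follows in two steps: a 1-morphism $F \colon \ms C \to \ms D$ to a stack factors uniquely through $\ms C'$ because $\ms D$ is in particular a prestack, and the resulting 1-morphism $\ms C' \to \ms D$ extends uniquely to $\ms C^s \to \ms D$ by sending each descent datum to the (unique up to unique isomorphism) object of $\ms D$ obtained by gluing.

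The main obstacle is bookkeeping rather than mathematical difficulty: one must verify that the construction in step one produces a genuine fibered category, i.e., that the sheafified Hom-sets assemble with associative composition and compatible pullbacks, and that the composition and refinement operations on descent data in step two give a well-defined category whose morphisms are insensitive to the choice of refinement. Both verifications are standard exercises in sheaf theory and descent, and no new input is required beyond the universal property of sheafification and the definition of a stack.
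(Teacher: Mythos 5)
Your proposal follows the same two-step route as the paper: first sheafify the Hom-presheaves to obtain a prestack, then adjoin descent data to obtain the stack; the paper simply cites \cite[3.2]{l-mb} for the second step (and first replaces $\ms C$ by a split fibered category to streamline the Hom-presheaf bookkeeping) where you spell the construction out. The argument is correct and essentially identical to the one given in the paper.
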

\begin{proof} 
  The proof is the usual type of argument.  A reader interested in
  seeing a generalization to stacks in categories larger than
  groupoids should consult \cite{giraud}.  First, we may assume that
  in fact $\ms C\to T$ admits a splitting (after replacing $\ms C$ by
  a 1-isomorphic fibered category).  Define a new fibered category
  $\ms C^{p}$ as follows: the objects will be the same, but the
  morphisms between two objects $a$ and $b$ over $t\in T$ will be the
  global sections of the sheafification of the presheaf
  $\hom_{t}(a,b):(s\xto{\phi}
  t)\mapsto\hom_{s}(\phi^{\ast}a,\phi^{\ast}b)$ on $t$.  Clearly $\ms
  C^{p}$ is a prestack (i.e., given any two sections $a$ and $b$ over
  $t$, the hom-presheaf just described is a sheaf) and the natural map
  $\ms C\to\ms C^{p}$ of fibered categories over $T$ is universal up
  to 1-isomorphism for 1-morphisms of $\ms C$ into prestacks.  Now we
  apply \cite[3.2]{l-mb} to construct $\ms C^{s}$ as the
  stackification of $\ms C^{p}$.
\end{proof}

\begin{defn} 
  The stack of \emph{generalized Azumaya algebras\/} on schemes is
  defined to be the stack in groupoids $\PR^{s}\to Sch_{\etale}$
  associated to the fibered category of pre-generalized Azumaya
  algebras.
\end{defn}

\begin{remark} 
  Explicitly, given a scheme $X$, to give a generalized Azumaya
  algebra on $X$ is to give an \'etale 3-hypercovering
  $\xymatrix{Y''\ar@<4pt>[r]\ar[r]\ar@<-4pt>[r]&Y'\ar@<2pt>[r]\ar@<-2pt>[r]&Y\ar[r]&X}$,
  a totally supported sheaf $\ms F$ on $Y$, and a gluing datum for
  $\rsend_{Y}(\ms F)$ in the derived category $\D(Y')$ whose
  coboundary in $\D(Y'')$ is trivial.  Two such objects $(Y_{1},\ms
  F_{1},\delta_{1})$ and $(Y_{2},\ms F_{2},\delta_{2})$ are isomorphic
  if and only if there is a common refinement $Y_{3}$ of the
  3-hypercovers $Y_{1}$ and $Y_{2}$ and an isomorphism $\phi:\ms
  F_{1}|_{Y_{3}}\simto\ms F_{2}|_{Y_{3}}$ commuting with the
  restrictions of $\delta_{1}$ and $\delta_{2}$.  Thus, a generalized
  Azumaya algebra is gotten by gluing ``derived endomorphism
  algebras'' together in the \'etale topology.  When $X$ is a
  quasi-projective smooth surface, or, more generally, a
  quasi-projective scheme smooth over an affine with fibers of
  dimension at most 2, then the sections of $\PR$ over $X$ are the
  same as the sections of $\PR^{s}$ over $X$; see Section \ref{S:p=s on
    surface}.
\end{remark}
\begin{example} 
  Let $\pi:\ms X\to X$ be a $\m_{n}$-gerbe and $\ms F$ a totally
  supported perfect $\ms X$-twisted sheaf.  The complex
  $\R\pi_{*}\R\send_{\ms X}(\ms F)\in\D(X)$ is a pre-generalized
  Azumaya algebra, hence has a naturally associated generalized
  Azumaya algebra.  We will see below that the global sections of the
  stack $\PR^{s}$ are precisely the weak algebras of this form.
\end{example}

\begin{lem}\label{L:isoms-check}
  Let $\ms F$ and $\ms G$ be totally supported perfect sheaves on a
  $\G_m$-gerbe $\ms X\to X$.
  \begin{enumerate}
  \item The sheaf of isomorphisms between the generalized Azumaya
    algebras associated to the weak algebras $\R\pi_{\ast}\rsend(\ms
    F)$ and $\R\pi_{\ast}\rsend(\ms G)$ is naturally isomorphic to
    $\isom(\ms F,\ms G)/\G_m$, with $\G_m$ acting by scalar
    multiplication on $\ms G$.
  \item Any isomorphism of generalized Azumaya algebras
    $\phi:\R\pi_{\ast}\rsend(\ms F)\simto\R\pi_{\ast}\rsend(\ms G)$ is
    the isomorphism associated to an isomorphism $\ms F\simto\ms
    L\tensor\ms G$ for some invertible sheaf $\ms L$ on $X$.
  \end{enumerate}
\end{lem}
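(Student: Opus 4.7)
The plan is to reduce both parts to the derived Skolem-Noether theorem (Theorem~\ref{T:the one}) by working \'etale-locally on $X$ where the gerbe $\ms X\to X$ splits, then to globalize for part (2) by a standard \v{C}ech cocycle argument.

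For part (1), I would first construct a natural map
$$\isom_{\ms X}(\ms F,\ms G)/\G_m\longrightarrow\isom_{\GAz}(\R\pi_{*}\rsend(\ms F),\R\pi_{*}\rsend(\ms G))$$
by sending an isomorphism $\psi:\ms F\simto\ms G$ to the conjugation weak-algebra morphism $\alpha\mapsto\psi\circ\alpha\circ\psi^{-1}$; scalar multiples of $\psi$ induce the same conjugation, so this factors through the $\G_m$-quotient. Note that $\isom(\ms F,\ms G)$ descends from $\ms X$ to $X$ because the band $\G_m$ acts with equal weight $1$ on source and target, and hence trivially on the Hom sheaf. To check the displayed map is an isomorphism of sheaves on $X$, I would work \'etale-locally, choosing $U\to X$ over which $\ms X|_U$ is trivialised, so that $\ms F,\ms G$ correspond to ordinary totally supported perfect sheaves $F,G$ on $U$ and $\R\pi_{*}\rsend(\ms F)|_U\cong\rsend_U(F)$. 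The statement then reduces stalk-wise to the claim that $\isom(F,G)/\ms O^{\times}\to\isom(\rsend(F),\rsend(G))$ is bijective on geometric stalks, which is exactly Theorem~\ref{T:the one}; the shift $n$ appearing there is forced to vanish because $F$ and $G$ are both concentrated in cohomological degree $0$. Finally, because the stackification used in defining $\GAz$ only sheafifies hom-presheaves, the target isomorphism sheaf agrees with the sheafification of pre-generalised Azumaya isomorphisms, which is what I have just constructed.

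For part (2), I would start from a global isomorphism $\phi$ of generalized Azumaya algebras and use part (1) to lift it \'etale-locally. Explicitly, over an \'etale cover $\{U_i\to X\}$, $\phi|_{U_i}$ lifts to an honest isomorphism $\psi_i:\ms F|_{U_i}\simto\ms G|_{U_i}$. On overlaps $U_{ij}$, both $\psi_i$ and $\psi_j$ project to $\phi|_{U_{ij}}$ in the $\G_m$-quotient, so $\psi_i=\lambda_{ij}\psi_j$ for a unique $\lambda_{ij}\in\G_m(U_{ij})$. Checking the cocycle condition on triple overlaps is routine, and $(\lambda_{ij})$ defines a line bundle $\ms L$ on $X$ with trivialisations $e_i$ over $U_i$ satisfying $e_i=\lambda_{ij}e_j$. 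The maps $e_i^{-1}\otimes\psi_i:\ms F|_{U_i}\to\ms L^{-1}\otimes\ms G|_{U_i}$ then glue to a global isomorphism $\ms F\simto\ms L^{-1}\otimes\ms G$, which gives the statement after renaming $\ms L^{-1}$ as $\ms L$.

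The main technical point I expect to need care with is the descent identification $\R\pi_{*}\rsend(\ms F)|_U\cong\rsend_U(F)$ after a local splitting of the gerbe: it relies on the fact that $\rsend$ of a weight-$1$ twisted sheaf lives in weight $0$ and descends cleanly through $\pi_{*}$. While this is a standard consequence of the weight decomposition for a $\G_m$-gerbe, it must be kept compatible with the explicit realisations of $\rend$ used throughout Section~\ref{S:skolem-noether}, and the book-keeping of shifts and signs there needs to be threaded through the pushforward; aside from this, the derived Skolem-Noether theorem does all the real work.
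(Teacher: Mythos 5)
Your proof is correct and takes essentially the same route as the paper: part (1) is reduced to the derived Skolem--Noether theorem (Theorem \ref{T:the one}) after an \'etale-local untwisting of the gerbe (the paper phrases this as passing to a strictly local base and twisting down by an invertible $\ms X$-twisted sheaf), and part (2) is obtained by the same coboundary/cocycle argument producing the invertible sheaf $\ms L$.
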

\begin{proof}
  Temporarily write $\ms I$ for the sheaf of isomorphisms of
  generalized Azumaya algebras from $\R\pi_{\ast}\rsend(\ms F)$ to
  $\R\pi_{\ast}\rsend(\ms G)$.  There is clearly a map $\chi:\isom(\ms
  F,\ms G)/\G_m\to\ms I$.  To verify that it is an isomorphism, it
  suffices to verify it \'etale-locally on $X$, whence we may assume
  that $X$ is strictly local.  Choosing an invertible $\ms X$-twisted
  sheaf and twisting down $\ms F$ and $\ms G$, we are reduced to
  showing the analogous statement for totally supported sheaves on $X$
  itself. Any local section of $\ms I$ comes from an isomorphism of
  weak algebras $\rsend(\ms F)\simto\rsend(\ms G)$, so Theorem \ref{T:the one}
  shows that $\chi$ is surjective.  Similarly, any section of the
  kernel of $\chi$ must locally be trivial, whence $\chi$ is an
  isomorphism.

  The second part is a formal consequence of the first: there is an
  \'etale covering $U\to X$ such that $\phi|_U$ is associated to an
  isomorphism $\psi:\ms F|_U\simto\ms G|_U$.  The coboundary of $\psi$
  on the product $U\times_X U$ is multiplication by some scalar, which
  is a cocycle by a formal calculation.  This gives rise to the
  invertible sheaf $\ms L$; tensoring with $\ms L$ makes the
  coboundary of $\psi$ trivial, whence it descends to an isomorphism
  $\ms F\to\ms L\tensor\ms G$ inducing $\phi$, as desired.
\end{proof}

\begin{defn}\label{D:gerbe-o-trivs} 
  Let $\ms A$ be a generalized Azumaya algebra on $X$.  The
  \emph{gerbe of trivializations\/} of $\ms A$, denoted $\ms X(\ms
  A)$, is the stack on the small \'etale site $X_{\etale}$ whose sections
  over $V\to X$ given by pairs $(\ms F,\phi)$, where $\ms F$ is a
  totally supported sheaf on $V$ and $\phi:\R\send_{V}(\ms F)\simto\ms
  A|_{V}$ is an isomorphism of generalized Azumaya algebras.  The
  isomorphisms in the fiber categories are isomorphisms of the sheaves
  which respect the identifications with $\ms A$, as usual.
\end{defn}
This is entirely analogous to the gerbe produced in section V.4.2 of
\cite{giraud}.  There is also an analogue of the $\m_n$-gerbe
associated to an Azumaya algebra of degree $n$.

\begin{defn}
  Given a generalized Azumaya algebra $\ms A$ of degree $n$ on $\ms
  X$, the \emph{gerbe of trivialized trivializations\/} of $\ms A$,
  denoted $\ms X_{\triv}(\ms A)$, is the stack on the small \'etale
  site of $X$ whose sections over $U\to X$ consist of triples $(\ms
  F,\phi,\delta)$ with $\phi:\R\send_{U}(\ms F)\simto\ms A_U$ an
  isomorphism of generalized Azumaya algebras and $\ms
  O_U\simto\det(\ms F)$ an isomorphism of invertible sheaves on $U$.
  The isomorphisms in the fiber categories are isomorphisms of the
  sheaves which preserve the identifications with $\ms A$ and the
  trivializations of the determinants.
\end{defn}

\begin{lem} The stack $\ms X(\ms A)$ is a $\G_{m}$-gerbe.  If $\ms A$
  has degree $n$, then $\ms X_{\triv}(\ms A)$ is a $\m_n$-gerbe whose
  associated cohomology class maps to $[\ms X(\ms A)]$ in
  $\H^2(X,\G_m)$.
\end{lem}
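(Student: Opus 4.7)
The plan is to verify the three defining properties of a gerbe for $\ms X(\ms A)$ (local non-emptiness, local connectedness, and identification of the inertia with $\G_m$), then to do the same for $\ms X_{\triv}(\ms A)$ with $\m_n$ replacing $\G_m$, and finally to compare the two via the evident forgetful morphism. Essentially every piece is already packaged into Lemma \ref{L:isoms-check} and Theorem \ref{T:the one}; the only task is to assemble them correctly.

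First I would handle $\ms X(\ms A)$. Local non-emptiness is immediate from the definition of a generalized Azumaya algebra together with the explicit description of sections of $\PR^{s}$ as \'etale-local gluings of derived endomorphism algebras: on a suitable \'etale cover there exists a totally supported perfect sheaf $\ms F$ with $\R\send(\ms F)\simto\ms A$. For local connectedness, given two local sections $(\ms F,\phi)$ and $(\ms G,\psi)$ over some $V\to X$, the composition $\phi^{-1}\circ\psi\colon\R\send(\ms G)\simto\R\send(\ms F)$ is an isomorphism of generalized Azumaya algebras, so Lemma \ref{L:isoms-check}(2) identifies it with an isomorphism $\ms G\simto\ms L\tensor\ms F$ for some invertible sheaf $\ms L$. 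Working on an \'etale neighborhood that trivializes $\ms L$ produces an honest isomorphism of the two trivializations. For the inertia, an automorphism of $(\ms F,\phi)$ is exactly an automorphism of $\ms F$ inducing the identity on $\R\send(\ms F)$, and Lemma \ref{L:isoms-check}(1) (or equivalently the uniqueness clause of Theorem \ref{T:the one}) identifies this group of automorphisms with the scalars $\G_m$ acting on $\ms F$. These three facts together mean $\ms X(\ms A)$ is a $\G_m$-gerbe.

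Next I would treat $\ms X_{\triv}(\ms A)$ via the forgetful morphism $q\colon\ms X_{\triv}(\ms A)\to\ms X(\ms A)$ that sends $(\ms F,\phi,\delta)$ to $(\ms F,\phi)$. The fiber of $q$ over a section $(\ms F,\phi)$ is the sheaf of trivializations $\ms O\simto\det\ms F$, which is a $\G_m$-torsor and in particular locally trivial; combined with local non-emptiness and local connectedness of $\ms X(\ms A)$, this gives the same two properties for $\ms X_{\triv}(\ms A)$. To identify the inertia, observe that an automorphism of $(\ms F,\phi,\delta)$ is first of all an automorphism of $(\ms F,\phi)$, hence a scalar $\lambda\in\G_m$ by the previous paragraph; the condition that it preserve $\delta$ is the condition that $\lambda^n=1$ (since $\ms F$ has generic rank $n$ and hence $\lambda$ acts on $\det\ms F$ by $\lambda^n$). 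Thus the inertia is exactly $\m_n\subset\G_m$, so $\ms X_{\triv}(\ms A)$ is a $\m_n$-gerbe.

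Finally, for the cohomological comparison, the morphism $q$ is tautologically compatible with the inclusion $\m_n\hookrightarrow\G_m$ of inertia. By the standard functoriality of gerbe classes under a central extension of abelian bands, this exhibits $[\ms X_{\triv}(\ms A)]\in\H^2(X,\m_n)$ as a lift of $[\ms X(\ms A)]\in\H^2(X,\G_m)$ along the map induced by $\m_n\to\G_m$, which is the required statement. No step presents a real obstacle; the only mild subtlety is the inertia computation for $\ms X(\ms A)$, which is where Theorem \ref{T:the one} is doing genuine work by ruling out non-scalar automorphisms of $\ms F$ that fix the induced algebra isomorphism up to homotopy.
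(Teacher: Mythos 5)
Your argument is correct and is exactly the intended one: the paper's proof is a one-line citation of Theorem \ref{T:the one} together with the total supportedness of the sheaves, and your write-up simply unwinds what that citation is doing (local existence from the definition of a pre-generalized Azumaya algebra, local transitivity and the inertia computation from the derived Skolem--Noether theorem, and the $\m_n\hookrightarrow\G_m$ comparison of bands for the cohomology classes). The only point worth making explicit is where total supportedness enters: it forces $\ms O_{\ms F}=\ms O$ in Theorem \ref{T:the one}, so the kernel of $\saut(\ms F)\to\saut(\rsend(\ms F))$ is all of $\G_m$ rather than a proper quotient $\ms O_{\ms F}^{\times}$, which is precisely what makes the inertia of $\ms X(\ms A)$ come out to $\G_m$ on the nose.
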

\begin{proof} 
  This follows immediately from the derived Skolem-Noether Theorem
  \ref{T:the one} and the fact that all of the sheaves $\ms F$ are
  totally supported.
\end{proof}

\begin{cor} A generalized Azumaya algebra $\ms A$ has a class in
  $\H^{2}(X,\G_{m})$.  When the rank of $\ms A$ is $n^{2}$, $\ms A$
  has a class in $\H^{2}(X,\m_{n})$ (in the fppf topology).
\end{cor}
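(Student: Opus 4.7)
The plan is to read the corollary off directly from the preceding lemma by invoking the standard dictionary between abelian gerbes and second cohomology. By the lemma just stated, the stack $\ms X(\ms A)$ of trivializations of $\ms A$ is a $\G_m$-gerbe on $X$, and when $\ms A$ has rank $n^2$ the stack $\ms X_{\triv}(\ms A)$ of trivialized trivializations is a $\m_n$-gerbe whose image in $\H^2(X,\G_m)$ is the class of $\ms X(\ms A)$.

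First I would simply define the class of $\ms A$ in $\H^2(X,\G_m)$ to be the class of $\ms X(\ms A)$ under Giraud's identification of $\G_m$-gerbes on $X$ (up to equivalence) with $\H^2(X,\G_m)$ (taken in the \'etale or fppf topology, which agree for $\G_m$ by Grothendieck's theorem). One should check this is well-defined on the isomorphism class of $\ms A$: an isomorphism of generalized Azumaya algebras $\ms A\simto\ms A'$ induces, by functoriality of Definition \ref{D:gerbe-o-trivs}, an equivalence of gerbes $\ms X(\ms A)\simto\ms X(\ms A')$, hence an equality of cohomology classes.

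Second, for the rank $n^2$ case, I would again apply Giraud's theorem, this time in the fppf topology (which is necessary because $\m_n$ is only fppf-locally trivial in general), to interpret the $\m_n$-gerbe $\ms X_{\triv}(\ms A)$ as a class in $\H^2(X,\m_n)_{\fppf}$. The compatibility with the previous class is built into the lemma: the natural map $\ms X_{\triv}(\ms A)\to\ms X(\ms A)$ arising from forgetting the trivialization of the determinant is compatible with the inclusion $\m_n\inj\G_m$, so the induced map $\H^2(X,\m_n)\to\H^2(X,\G_m)$ sends the class of $\ms A$ in $\H^2(X,\m_n)$ to its class in $\H^2(X,\G_m)$.

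There is no real obstacle here; the work has all been done in the preceding lemma (which itself rests on the derived Skolem-Noether Theorem \ref{T:the one}). The only mild subtlety is being explicit about which topology is used for which coefficient sheaf, and about the well-definedness of the cohomology class on isomorphism classes of generalized Azumaya algebras, which follows from functoriality of the constructions $\ms A\mapsto\ms X(\ms A)$ and $\ms A\mapsto\ms X_{\triv}(\ms A)$.
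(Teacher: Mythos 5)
Your proposal is correct and is exactly the argument the paper intends: the corollary is read off from the preceding lemma via Giraud's identification of $A$-gerbes with classes in $\H^2(X,A)$ (the paper in fact gives no separate proof, treating this as immediate). Your added remarks on well-definedness under isomorphism and on the use of the fppf topology for $\m_n$ are consistent with the paper's conventions.
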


\begin{defn} When $\rk\ms A=n^{2}$, we will call the cohomology class
  in $\H^{2}(X,\m_{n})$ the \emph{class of $\ms A$\/}, and write
  $\cl(\ms A)$.
\end{defn}

Let $\pi:\ms X(\ms A)\to X$ denote the natural projection.
\begin{lem}\label{L:embedding lemma} 
  There is an $\ms X(\ms A)$-twisted sheaf $\ms F$ and an isomorphism
  of generalized Azumaya algebras $\phi:\R\pi_{\ast}\R\send_{\ms X(\ms
    A)}(\ms F)\simto\ms A$.  The datum $(\ms X(\ms A),\ms F,\phi)$ is
  functorial in $\ms A$.
\end{lem}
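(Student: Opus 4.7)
The plan is to construct $\ms F$ tautologically from the very definition of the gerbe of trivializations. Recall that $\ms X(\ms A)$ is defined so that a section over $V \to X$ is a pair $(\ms G, \phi)$ with $\ms G$ a totally supported sheaf on $V$ and $\phi:\R\send_V(\ms G) \simto \ms A|_V$ an isomorphism of generalized Azumaya algebras, and a morphism $(\ms G_1,\phi_1) \to (\ms G_2,\phi_2)$ over the identity of $V$ is an isomorphism $\ms G_1 \simto \ms G_2$ compatible with the $\phi_i$. Thus on the site of pairs $(V,(\ms G,\phi))$ underlying $\ms X(\ms A)$ there is an obvious tautological assignment sending each such pair to the sheaf $\ms G$; call it $\ms F$. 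This $\ms F$ is a quasi-coherent sheaf on $\ms X(\ms A)$ by construction.

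The next step is to check that $\ms F$ is $\ms X(\ms A)$-twisted, i.e.\ that the inertial $\G_m$ acts by scalar multiplication on each fiber. But by Lemma \ref{L:isoms-check}(1), the automorphism sheaf of any local section $(\ms G,\phi)$ of $\ms X(\ms A)$ is precisely the scalar $\G_m$ acting on $\ms G$, and this is by definition how that $\G_m$ acts on the value $\ms F(V,(\ms G,\phi)) = \ms G$. Hence $\ms F$ is an $\ms X(\ms A)$-twisted sheaf in the sense of Section \ref{sec:twisted-objects}. Moreover, the maps $\phi$ themselves assemble into a tautological isomorphism $\phi_{\text{univ}}:\R\send_{\ms X(\ms A)}(\ms F) \simto \pi^{\ast}\ms A$ of generalized Azumaya algebras on $\ms X(\ms A)$, because the data of $\phi$ is built into each object of $\ms X(\ms A)$. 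Since $\R\send(\ms F) \cong \ms F \ltensor \ms F^{\vee}$ the two $\G_m$-actions cancel, so $\R\send_{\ms X(\ms A)}(\ms F)$ descends canonically along $\pi$; pushing $\phi_{\text{univ}}$ forward then yields the required isomorphism $\phi:\R\pi_{\ast}\R\send_{\ms X(\ms A)}(\ms F) \simto \ms A$.

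Finally, functoriality in $\ms A$ is automatic: an isomorphism $\ms A \simto \ms A'$ of generalized Azumaya algebras induces an equivalence $\ms X(\ms A) \simto \ms X(\ms A')$ of $\G_m$-gerbes by postcomposition with $\phi$, and under this equivalence the tautological sheaves and tautological identifications are matched by definition. The main technical point — really the only nontrivial ingredient — is the assertion that $\Aut(\ms G,\phi) = \G_m$ acting by scalars, which is exactly the content of Lemma \ref{L:isoms-check}(1), itself a consequence of the derived Skolem--Noether Theorem \ref{T:the one}; everything else in the proof is bookkeeping about the stackification recalled in Lemma \ref{L:stackification} and the fact that $\R\send$ of a twisted sheaf is canonically untwisted.
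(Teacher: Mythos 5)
Your proposal is correct and follows essentially the same route as the paper, whose proof is just the one-line observation that the first projection on the gerbe of trivializations tautologically yields a twisted sheaf $\ms F$ with natural local identifications $\R\send_V(f^\ast\ms F)\simto\ms A|_V$. Your extra verifications (that the inertial $\G_m$ acts by scalars, so $\ms F$ is genuinely twisted, and that $\rsend(\ms F)\cong\ms F\ltensor\ms F^\vee$ descends along $\pi$) are correct amplifications of what the paper leaves implicit; the only quibble is that the scalar-automorphism fact is more directly the content of the preceding lemma (that $\ms X(\ms A)$ is a $\G_m$-gerbe, via Theorem \ref{T:the one} and total supportedness) than of Lemma \ref{L:isoms-check}(1), which concerns algebras pushed forward from a gerbe.
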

\begin{proof} 
  As usual, the construction of $\ms X(\ms A)$ yields by first
  projection a twisted sheaf $\ms F$.  Whenever $\ms X(\ms A)$ has a
  section $f$ over $V$, there is an isomorphism
  $\R\send_{V}(f^{\ast}\ms F)\to\ms A|_{V}$ by construction, and this
  is natural in $V$ and $f$.  This is easily seen to imply the
  remaining statements of the lemma.
\end{proof}

Let $\ms D\to Sch_{\etale}$ denote the fibered category of derived
categories which to any scheme $X$ associates the derived category
$\D(X)$ of \'etale $\ms O_{X}$-modules.
 
\begin{prop}\label{P:gen az isom} 
  There is a faithful morphism of fibered categories $\PR^{s}\to\ms
  D$ which identifies $\PR^{s}$ with the subcategory of $\ms D$
  whose sections over $X$ are weak algebras of the form
  $\R\pi_{\ast}\R\send_{\ms X}(\ms F)$, where $\pi:\ms X\to X$ is a
  $\G_{m}$-gerbe, and whose isomorphisms $\R\pi_{\ast}\R\send_{\ms
    X}(\ms F)\simto\R\pi'_{\ast}\R\send_{\ms X'}(\ms F')$ are
  naturally a pseudotorsor under $\saut(\ms F)/\G_m$.  
\end{prop}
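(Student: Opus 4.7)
The plan is to construct the functor tautologically, identify its essential image via Lemma~\ref{L:embedding lemma}, and use Lemma~\ref{L:isoms-check} together with derived Skolem--Noether (Theorem~\ref{T:the one}) to identify the hom-sheaves. The main obstacle is reconciling the descent data used to define $\PR^s$ as a stackification with the weak-algebra structure on an object of $\ms D$.

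First, I would produce the morphism $\PR^s\to\ms D$. A pre-generalized Azumaya algebra is, by definition, a weak algebra object in $\D(X)$, so the tautological assignment gives a $1$-morphism of fibered categories $\PR\to\ms D$. Since perfect complexes satisfy \'etale descent (with the finiteness conditions implicit in the paper's convention on $\D$), the relevant subcategory of $\ms D$ is itself a stack; by the universal property of stackification from Lemma~\ref{L:stackification}, the map $\PR\to\ms D$ factors through $\PR^s$, yielding the desired functor $\Phi$.

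Next, I would identify the essential image. By Lemma~\ref{L:embedding lemma}, any object of $\PR^s(X)$ is isomorphic in $\ms D$ to a weak algebra of the form $\R\pi_\ast\R\send_{\ms X}(\ms F)$, where $\pi:\ms X\to X$ is the gerbe of trivializations and $\ms F$ is its canonical totally supported twisted sheaf. Conversely, given such a weak algebra, pulling back to an \'etale cover $U\to X$ that trivializes $\ms X$ and twisting down by an invertible twisted sheaf yields $\rsend_U(\ms F_U)$ for a totally supported perfect sheaf $\ms F_U$ on $U$; this is a pre-generalized Azumaya algebra on $U$, and the descent datum inherited from the weak-algebra structure in $\ms D$ defines a section of $\PR^s(X)$ mapping to the given weak algebra.

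Finally, for the hom-sheaves: given two objects in the essential image, say $\R\pi_\ast\R\send(\ms F)$ and $\R\pi'_\ast\R\send(\ms F')$, any isomorphism of the underlying weak algebras in $\ms D$ canonically identifies the two gerbes of trivializations (by the functoriality of the gerbe of trivializations combined with Lemma~\ref{L:isoms-check}(ii)), so we may assume $\ms X=\ms X'$. Lemma~\ref{L:isoms-check}(i) then computes the sheaf of isomorphisms in $\PR^s$ as $\isom(\ms F,\ms F')/\G_m$, and the derived Skolem--Noether theorem (Theorem~\ref{T:the one}), applied locally on $X$ after trivializing the gerbe, gives the identical description for the sheaf of weak-algebra isomorphisms in $\ms D$. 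Faithfulness of $\Phi$ follows, and the pseudotorsor structure under $\saut(\ms F)/\G_m$ is visible as the action by precomposition, which is simply transitive when nonempty by the same Skolem--Noether calculation.
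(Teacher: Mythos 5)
Your second and third paragraphs track the paper's own argument: the essential image is identified via Lemma~\ref{L:embedding lemma}, and the hom-sheaves are computed by reducing to a single gerbe and applying Lemma~\ref{L:isoms-check} (which rests on Theorem~\ref{T:the one}). The problem is in your first paragraph, where you construct the functor $\Phi:\PR^s\to\ms D$ by asserting that ``perfect complexes satisfy \'etale descent,'' so that the relevant subcategory of $\ms D$ is a stack and the tautological map $\PR\to\ms D$ factors through the stackification. This claim is false in general: objects of the derived category do not satisfy effective descent, and even the hom-presheaves $\hom_{\D}(\ms A,\ms B)$ need not be sheaves; the obstructions live in the negative Ext groups $\ext^{-i}(\ms A,\ms B)$. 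This failure is precisely the reason the paper passes to the stackification $\PR^s$ at all, and it is why Proposition~\ref{P:genaz is TDEA} --- which proves $\ext^{-i}(\ms A,\ms A)=0$ using that the fibers have dimension at most $2$ --- is a nontrivial result confined to relative surfaces. If your descent claim held, Section~\ref{S:p=s on surface} would be vacuous and the stackification would never have been necessary. As written, your appeal to Lemma~\ref{L:stackification} therefore does not produce the functor: a section of $\PR^s$ over $X$ is a priori only a descent datum of local weak algebras $\rsend_Y(\ms F)$ along a hypercover, and there is no direct way to glue it to an object of $\D(X)$.

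The correct construction --- and the one the paper uses --- is to define $\Phi$ by Lemma~\ref{L:embedding lemma} itself: from a generalized Azumaya algebra $\ms A$ one builds the $\G_m$-gerbe of trivializations $\ms X(\ms A)$ and its universal twisted sheaf $\ms F$, and since $\ms F$ is an honest sheaf on a gerbe (where descent \emph{does} hold, the gluing being controlled by Theorem~\ref{T:the one}), the pushforward $\R\pi_\ast\rsend(\ms F)$ is an honest object of $\D(X)$ functorially attached to $\ms A$. In other words, the descent is performed at the level of the twisted sheaf on the gerbe, not at the level of the complex on $X$. You invoke Lemma~\ref{L:embedding lemma} only to identify the essential image; moving it to the front, as the source of the morphism $\PR^s\to\ms D$, repairs the argument and brings it in line with the paper's proof. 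The remainder of your argument (reduction to $\ms X=\ms X'$ via functoriality of the gerbe of trivializations, the pseudotorsor structure under $\saut(\ms F)/\G_m$, and faithfulness) is correct as it stands.
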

\begin{proof} 
  The morphism $\PR^{s}\to\ms D$ comes from Lemma \ref{L:embedding lemma}.
  Given $\ms A$ and $\ms B$, an isomorphism $\phi:\ms A\to\ms B$
  induces an isomorphism $\ms X(\ms A)\simto\ms X(\ms B)$.  Thus,
  given $\ms X,\ms F,\ms X',\ms F'$, an isomorphism
  $\R\pi_{\ast}\rsend(\ms F)\simto\R\pi'_{\ast}\rsend(\ms F')$ induces
  an isomorphism $\eps:\ms X\to\ms X'$ and an isomorphism of
  generalized Azumaya algebras $\R\pi_\ast\rsend(\ms
  F)\simto\R\pi_\ast\rsend(\eps^{\ast}\ms F')$.  (In particular, any
  isomorphism is identified with an isomorphism of the underlying weak
  algebras.)  By Lemma \ref{L:isoms-check}, once there is an isomorphism the
  set of isomorphisms is a torsor under $\saut(\ms F)/\G_m$, as
  claimed.  The faithfulness also results from Lemma \ref{L:isoms-check}.
\end{proof}

\begin{remark}\label{R:rigid-prep}
  When $\ms X=\ms X'$ in Proposition \ref{P:gen az isom}, the sheaf of
  isomorphisms is simply identified with the quotient sheaf $\isom(\ms
  F,\ms F')/\G_m$.  This will be the case when we study the moduli of
  generalized Azumaya algebras on a surface, as the (geometric) Brauer
  class will be constant in families.
\end{remark}
 
Thus, at the end of the complex process of stackification, one is 
left simply with the derived endomorphism algebras of twisted sheaves, 
with a subset of the quasi-isomorphisms giving the isomorphisms.

\subsubsection{Identification with rigidifications}\label{sec:riggaz}
Let $\ms G_{X}(n)$ be the stack of generalized Azumaya algebras on $X$
of degree $n$.

\begin{prop}\label{P:G-rig}
  The morphism $\rho:\ms T_{X}^{\parf}\to\ms G_{X}$ sending $\ms F$ to
  $\rsend(\ms F)$ yields an isomorphism $\ms
  T_{X}\thickslash\G_m\simto\ms G_{X}$.
\end{prop}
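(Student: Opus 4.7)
The plan is to verify that $\rho$ factors through the rigidification $\ms T_{X}^{\parf}\thickslash\G_m$, and then to check that the induced $1$-morphism $\bar\rho:\ms T_{X}^{\parf}\thickslash\G_m\to\ms G_{X}$ is essentially surjective (\'etale-locally) and fully faithful on fiber categories, which together yield that $\bar\rho$ is an equivalence.

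First, I would show that the scalar action of $\G_m$ on every totally supported perfect $\ms F$ is killed by $\rho$. The inclusion $\G_m\hookrightarrow\saut(\ms F)$ by scalars is well-defined precisely because total support forces $\ms O\to\send(\ms F)$ to be injective; moreover, conjugation by a scalar is the identity on $\rsend(\ms F)$, so the composite $\G_m\to\ms I(\ms T_{X}^{\parf})\to\rho^{\ast}\ms I(\ms G_{X})$ is trivial. The universal property of $\thickslash$ recalled from \cite{a-c-v} then produces the desired factorization $\bar\rho$.

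Next I would verify essential surjectivity. By construction, $\ms G_{X}=\PR^{s}$ is the stackification of the fibered category $\PR$ of pre-generalized Azumaya algebras, and every pre-generalized Azumaya algebra is by definition \'etale-locally of the form $\rsend(\ms F)$ for some totally supported perfect $\ms F$. Hence every section of $\ms G_{X}$ over $U$ has an \'etale cover $V\to U$ on which it becomes isomorphic to $\rho(\ms F|_V)$ for a suitable $\ms F$, which is exactly the condition that $\bar\rho$ be essentially surjective. For full faithfulness, I would check that for any totally supported perfect sheaves $\ms F,\ms G$ on $U$ the natural map of sheaves
\[
\isom(\ms F,\ms G)/\G_m\longrightarrow\isom_{\ms G_{U}}\bigl(\rsend(\ms F),\rsend(\ms G)\bigr)
\]
is an isomorphism; this is the untwisted case of Lemma \ref{L:isoms-check}(1) (apply it with the trivial $\G_m$-gerbe $X\times\B\G_m\to X$), which in turn rests on the derived Skolem-Noether Theorem \ref{T:the one}.

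The main obstacle is the bookkeeping forced by the stackification in the construction of $\ms G_{X}$: a priori, morphisms between two objects $\rsend(\ms F)$ and $\rsend(\ms G)$ in $\ms G_{X}$ are computed from the sheafified and then stackified hom-presheaf, rather than directly from maps in $\D(U)$. However, Proposition \ref{P:gen az isom} together with Lemma \ref{L:isoms-check} exactly identifies these isomorphism sheaves with the concrete sheaves $\isom(\ms F,\ms G)/\G_m$, so the main technical work is already done and the verification reduces to combining these two results with the universal property of rigidification.
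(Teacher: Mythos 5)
Your proposal is correct and follows essentially the same route as the paper: both rest on the derived Skolem--Noether Theorem \ref{T:the one} to identify the isomorphism sheaves of $\rsend(\ms F)$ and $\rsend(\ms G)$ with $\isom(\ms F,\ms G)/\G_m$, the paper phrasing this as the observation that $\ms G_X$ and $\ms T_X^{\parf}\thickslash\G_m$ are stackifications of the same prestack, while you unpack the identical content into the factorization through the rigidification plus local essential surjectivity and full faithfulness. No gaps.
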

\begin{proof}
  It follows from Theorem \ref{T:the one} that $\ms G_{X}$ is the
  stackification of the prestack given by taking totally supported
  sheaves and replacing $\isom(\ms F,\ms G)$ with $\isom(\ms F,\ms
  G)/\G_m$.  But this is precisely how $\ms T_{X}^{\parf}\thickslash \G_m$
  is constructed!
\end{proof}

\begin{lem}\label{L:stupid-lemma}
  If $\ms Q\to\ms Q'$ is a morphism of prestacks on a site which is
  fully faithful on fiber categories and an epimorphism (i.e., any
  object of $\ms Q'$ is locally in the image of $\ms Q$) then the
  induced map of stackifications is an isomorphism.
\end{lem}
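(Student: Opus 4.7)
The plan is to use the explicit one-step construction of the stackification of a prestack (where no sheafification of Hom-presheaves is needed, since those are already sheaves): an object of $\ms Q^s(U)$ is the datum of a covering $\{U_i \to U\}$ together with sections $x_i \in \ms Q(U_i)$ and transition isomorphisms $\phi_{ij}$ satisfying the cocycle condition, and $\hom_{\ms Q^s(U)}(x, y)$ is computed as equalizing sections of the $\uhom$-sheaves on a common refinement. I will verify that the induced functor $\sigma : \ms Q^s \to (\ms Q')^s$ is fully faithful on fiber categories and essentially surjective; since both source and target are stacks, this suffices.

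For fully faithfulness, let $x, y \in \ms Q^s(U)$ be given by descent data on a covering $\{U_i\}$, with images $x', y' \in (\ms Q')^s(U)$. A morphism $x \to y$ is a global section of the equalizer sheaf built from $\uhom_{\ms Q}(x_i, y_i)$ with respect to the transition data, and similarly for $x' \to y'$. Because $\ms Q \to \ms Q'$ is fully faithful on fiber categories, the natural map of presheaves $\uhom_{\ms Q}(x_i, y_i) \to \uhom_{\ms Q'}(x_i', y_i')$ is an isomorphism, and it commutes with the transition data on double overlaps. Taking equalizers therefore yields a bijection on the global Hom-sets.

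For essential surjectivity, take $c \in (\ms Q')^s(U)$, represented by a descent datum $(\{U_i\}, \{c_i\}, \{\psi_{ij}\})$ with $c_i \in \ms Q'(U_i)$. By the epimorphism hypothesis, after passing to a refinement of the cover we may assume each $c_i$ is isomorphic to $\sigma(a_i)$ for some $a_i \in \ms Q(U_i)$. Using fully faithfulness on fiber categories, the transition isomorphisms $\psi_{ij}$ lift uniquely to isomorphisms $\phi_{ij} : a_j|_{U_{ij}} \simto a_i|_{U_{ij}}$ in $\ms Q$; the cocycle identity on triple overlaps holds because it does after applying $\sigma$ and $\sigma$ is faithful. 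This produces an object of $\ms Q^s(U)$ whose image in $(\ms Q')^s(U)$ is isomorphic to $c$ via the chosen identifications $\sigma(a_i) \simto c_i$.

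The only point requiring any care is ensuring that the cocycle condition transfers both ways (up for lifting and down for checking compatibility with the chosen identifications), but this is an immediate consequence of the functoriality combined with the hypothesized fully faithfulness; there is no genuine obstacle. The lemma is essentially a direct unwinding of the construction of stackification for a prestack.
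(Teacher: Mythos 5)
Your proof is correct and follows the same route as the paper: both arguments use the explicit description of the stackification of a prestack as objects equipped with descent data, deduce full faithfulness from the isomorphism of $\uhom$-sheaves, and get essential surjectivity by refining a descent datum in $\ms Q'$ until it lifts to $\ms Q$. Yours simply spells out the details (equalizer description of Homs, unique lifting of transition isomorphisms, cocycle check via faithfulness) that the paper leaves implicit.
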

\begin{proof}
  An object of the stackification is just an object of the prestack
  with a descent datum.  Moreover, refining the descent datum yields a
  naturally isomorphic object of the stackification.  Thus, we see
  that the map on stackifications $\widetilde{\ms Q}\to\widetilde{\ms
    Q'}$ is fully faithful and an epimorphism.  (Indeed, after
  refining the descent datum on an object of $\ms Q'$, we can assume
  the object and descent datum come from $\ms Q$.)  It follows that it
  must be an isomorphism.
\end{proof}

\begin{prop}\label{P:G-rig-det}
  Assume $n$ is invertible on $X$.  The morphism $\rho:\ms
  T_{X}^{\ms O}(n)\to\ms G_{X}(n)$ sending $\ms F$ to $\rsend(\ms
  F)$ yields an isomorphism $\ms T_{X}^{\ms
    O}\thickslash\m_n\simto\ms G_{X}$.
\end{prop}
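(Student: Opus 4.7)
The plan is to reduce the statement to Proposition~\ref{P:G-rig} by comparing two rigidifications. I proceed in three main steps.

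First, I verify that $\rho$ factors through the rigidification $\ms T_X^{\ms O}(n)\thickslash\m_n$. There is a natural inclusion $\m_n\inj\ms I(\ms T_X^{\ms O}(n))$ coming from scalar multiplication: a scalar $\lambda\in\G_m$ acts on $\det\ms F$ by $\lambda^n$, so $\lambda$ defines an automorphism of the pair $(\ms F,\delta)$ precisely when $\lambda^n=1$. Since $\rsend(\ms F)$ is insensitive to scalar multiplication on $\ms F$ (by Theorem~\ref{T:the one}), the map $\rho$ kills this $\m_n$ in the inertia; it factors uniquely as $\ms T_X^{\ms O}(n)\thickslash\m_n\to\ms G_X(n)$ by the universal property of rigidification.

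Next, I introduce the commutative diagram
$$\xymatrix{\ms T_X^{\ms O}(n)\thickslash\m_n \ar[r]\ar[d] & \ms G_X(n)\ar@{=}[d] \\ \ms T_X^{\parf}(n)\thickslash\G_m \ar[r]^{\sim} & \ms G_X(n)}$$
where the bottom arrow is Proposition~\ref{P:G-rig} restricted to rank $n$, and the left vertical is induced by forgetting the determinant trivialization (which factors through $\thickslash\m_n$ since $\m_n\subset\G_m$ is killed after rigidifying by $\G_m$). Both compositions send $(\ms F,\delta)$ to $\rsend(\ms F)$, so the diagram commutes, and it suffices to show the left vertical map is an isomorphism of stacks.

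For this last step, essential surjectivity is immediate, since for any totally supported $\ms F$ of rank $n$, the invertible sheaf $\det\ms F$ is Zariski-locally trivial, so a local trivialization provides a lift. For full faithfulness of Hom-sheaves, I examine
$$\isom((\ms F,\delta_F),(\ms G,\delta_G))/\m_n \longrightarrow \isom(\ms F,\ms G)/\G_m.$$
Injectivity: if $\psi_1=\lambda\psi_2$ with $\lambda\in\G_m$ and both $\psi_i$ preserving trivializations, then $\det\psi_1=\lambda^n\det\psi_2$ while both determinants equal $\delta_G^{-1}\circ\delta_F$, forcing $\lambda^n=1$, i.e.\ $\lambda\in\m_n$. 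Local surjectivity: given any $\psi:\ms F\simto\ms G$, the unit $u:=\delta_G\circ\det\psi\circ\delta_F^{-1}\in\G_m(U)$ admits an $n$-th root $v$ étale-locally since $n$ is invertible on $X$; then $v^{-1}\psi$ preserves the trivializations and maps to the class of $\psi$. This étale-local existence of $n$-th roots, i.e.\ surjectivity of $\G_m\xrightarrow{n}\G_m$ in the étale topology, is the one place where the invertibility hypothesis is essential; the remainder of the argument is a formal manipulation of universal properties.
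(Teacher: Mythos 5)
Your proof is correct and follows essentially the same route as the paper: both reduce to Proposition \ref{P:G-rig} and then show that forgetting the determinant trivialization induces an isomorphism $\ms T^{\ms O}_X(n)\thickslash\m_n\simto\ms T_X(n)\thickslash\G_m$, with the same two key computations (a scalar preserving determinant trivializations is an $n$th root of unity, and units admit $n$th roots \'etale-locally because $n$ is invertible). The paper packages the final step as an appeal to Lemma \ref{L:stupid-lemma} on prestacks, which is exactly the formal content of your essential-surjectivity-plus-full-faithfulness check.
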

\begin{proof}
  In light of Proposition \ref{P:G-rig}, it is enough to prove that the natural
  map $\phi:\ms T^{\ms O}_{X}(n)\to\ms T_{X}(n)$ yields an
  isomorphism of the appropriate rigidifications.  Clearly, $\phi$ is
  an epimorphism.  Moreover, for any $\ms F$ and $\ms G$ with
  trivialized determinants, $\phi$ induces an isomorphism of sheaves
  $f:\isom_{\det}(\ms F,\ms G)/\m_n\simto\isom(\ms F,\ms G)/\G_m$.  To
  check this, it is enough to suppose $X=\spec A$ is strictly
  Henselian.  Since $n$ is invertible on $X$, any unit of $A$ has an
  $n$th root, from which it follows that $f$ is surjective.  If
  $\gamma:\ms F\to\ms G$ and $\eta:\ms F\to\ms G$ are isomorphisms
  which preserve determinants and differ by multiplication by a scalar
  $\theta$ on $\ms G$ then $\theta$ must be an $n$th root of unity,
  which shows that $f$ is injective.

  Forming prestacks by dividing out by the appropriate scalars, we
  thus find a morphism of prestacks $\ms Q\to\ms Q'$ which is fully
  faithful on fiber categories and is an \'etale epimorphism.
  Applying Lemma \ref{L:stupid-lemma} completes the proof.
\end{proof}

\subsubsection{The relative case}

Now we push everything forward (with one important Warning
\ref{W:warning} below) to define relative stacks of generalized Azumaya
algebras.

\begin{defn} 
  Let $f:X\to S$ be a morphism.  A \emph{relative generalized Azumaya
    algebra\/} on $X/S$ is a generalized Azumaya algebra on $X$ whose local
  sheaves are $S$-flat and totally pure in each geometric fiber.
  This is equivalent to writing $\ms A\cong\R\pi_{\ast}\R\send_{\ms
    X}(\ms F)$ with $\ms X\to X$ a $\G_{m}$-gerbe and $\ms F$ an
  $S$-flat $\ms X$-twisted sheaf which is totally pure in every
  geometric fiber.
\end{defn}
\begin{warning}\label{W:warning}
  Even though the absolute theory of generalized Azumaya algebras used
  totally supported sheaves, in the relative theory we will use
  totally pure sheaves.  While this is not necessary for the abstract
  results to be true, it gives a better moduli theory when $X/S$ is
  sufficiently nice (e.g., a smooth projective surface).
\end{warning}

As in Definition \ref{D:gerbe-o-trivs}, one may define the class of such a
generalized Azumaya algebra.  Let $\ms X\to X$ be a fixed
$\m_{n}$-gerbe, with $n\in\ms O_{S}(S)^{\times}$.

\begin{notn} 
  Let $\GAz_{\ms X/S}(n)$ denote the stack of generalized Azumaya
  algebras on $X/S$ of rank $n^{2}$ in every geometric fiber whose
  class agrees with $[\ms X]$ \'etale locally around every point on
  the base.  When we do not wish to specify the cohomology class, we
  will write $\GAz_{X/S}(n)$ for the stack of generalized Azumaya
  algebras of rank $n^2$ on each fiber.
\end{notn}
When $X\to S$ is proper and $n$ is invertible on $S$, the condition
that the cohomology class agree with $[\ms X]$ \'etale-locally on $S$
is equivalent to the condition that it agree with $[\ms X]$ in every geometric
fiber.

\subsubsection{Identification of $\GAz_{\ms X/S}(n)$ with $\ms M^{\ms
    X}_n$}
\label{sec:comp}

Let $\ms G_{X/S}(n)$ be the stack on $X_{\retale}$ parametrizing
generalized Azumaya algebras which are locally isomorphic to
$\rsend(\ms F)$ with $\ms F$ an object of $\ms P_{X/S}^{\parf}(n)$.  An
argument identical to Proposition \ref{P:G-rig} shows that the natural map $\ms
P_{X/S}^{\ms O}(n)\to\ms G_{X/S}(n)$ yields an isomorphism $\ms P^{\ms
  O}_{X/S}(n)\thickslash\m_n\simto\ms G_{X/S}(n)$.

On the other hand, it is easy to see that $\GAz_{X/S}(n)=f_{\ast}(\ms
G_{X/S}(n))$ and $\GAz_{\ms X/S}(n)=f_{\ast}^{\ms X}(\ms
G_{X/S}(n))$.  We conclude that $\GAz_{\ms X/S}(n)\cong\ms M^{\ms
  X}_n$, thus showing that generalized Azumaya algebras give a
coherent model for $\ms M^{\ms X}_n$.  Moreover, it is easy to see
that $\ms M^{\ms X}_n(c)$ is identified with the stack of generalized
Azumaya algebras of the form $\rshom(\ms F)$ with $\ms F$ an $\ms
X$-twisted sheaf with trivialized determinant and $\deg c_2(\ms
F)=c/2n$.  This condition is equivalent to the condition that $\deg
c_2(\rsend(\ms F))=c$, giving a coherent interpretation of $\ms M^{\ms
  X}_n(c)$.

For the reader uncomfortable with the stackification procedure (in
spite of its concrete outcome), we will show in Section \ref{S:p=s 
on surface} that when $X$ is a surface stackification is in fact unnecessary.

\section{Moduli of stable $\PGL_n$-torsors on surfaces}
\label{S:surfaces}

For the rest of this paper, we assume that $S=\spec k$ with $k$ a
separably closed field and $X/S$ a smooth projective surface with
a fixed ample divisor $H$.

\subsection{Stability of torsors}
\label{S:stability}

We first recall a basic definition.

\begin{defn}
  Given a torsion free sheaf $\ms F$, the \emph{slope\/} of $\ms F$ is
  $\deg\ms F/\rk\ms F$.
\end{defn}

To define stability for $\PGL_n$-torsors, we use the adjoint sheaf.
As described in Section \ref{S:azumaya}, this adjoint sheaf naturally comes with
an algebra structure, which we will use in our definition.

\begin{defn}\label{D:stab}
  An Azumaya algebra $\ms A$ on $X$ is \emph{stable\/} if for all
  non-zero right ideals $\ms I\subset\ms A$ of rank strictly smaller than
  $\rk\ms A$ we have $\mu(\ms I)<0$.
\end{defn}

\begin{remark}
  It is equivalent to quantify over left ideals.  Thus, one could
  state the definition by omitting the word ``right'' and quantifying
  over arbitrary ideals, understood as right or left ideals.  It is of
  course not sufficient to quantify over two-sided ideals.
\end{remark}

\begin{remark}
  This definition is meant to apply only to the classical notion of
  slope-stability, and not to the more refined notion due to
  Gieseker. While such notions of stability using normalized Hilbert
  polynomials are essential for the development of moduli theory using
  Geometric Invariant Theory (GIT), they are somewhat artificial in
  the sense that they no longer correspond to the existence of a
  Hermite-Einstein connection.  (However, recent work of Wang
  \cite{wang1,wang2} has clarified the analytic meaning of
  Gieseker-stability in terms of the existence of certain weak
  Hermite-Einstein connections.)

  One way to understand the compactifications of the stack of
  slope-stable bundles -- using slope-semistability or
  Gieseker-semistability, GIT-bound or purely stacky, etc. -- is that
  each really only serves to impose the kind of inductive strucure on
  the moduli problem necessary to prove theorems about the actual part
  of interest: the open sublocus of slope-stable bundles.  Working in
  a GIT-free manner (which is necessary in the context of twisted
  sheaves) frees us to ignore the subtleties (both algebraic and
  analytic) of Gieseker-stability.  This is pursued in
  \cite{more-moduli}, where the asymptotic properties of moduli are
  proved entirely without GIT.
\end{remark}

\begin{para}
  To relate Definition \ref{D:stab} to the cover of $f_\ast(\B\PGL_n)$
  by the stack of twisted sheaves, we recall some rudiments from the
  theory of Chern classes for twisted sheaves.  A different
  development of the theory of Chern classes for twisted sheaves and
  the relationship to the theory described here is given in \cite{heinloth}.

  Given a coherent $\ms X$-twisted sheaf $\ms F$, we can use the
  rational Chow theory of $\ms X$ to define Chern classes $c_i(\ms
  F)$, $i=1,2$.  (The first Chern class $c_1(\ms F)$ is just the class
  in Chow theory associated to the invertible sheaf $\det\ms F$.)
  There is also a degree map from $d:A_0(\ms X)\to\Q$; this has the
  property that the $0$-cycle supported over a closed point of $X$ has
  degree $1/n$.  We define a normalized degree function by $\deg=nd$.
  Using this degree, we have the following definition.
\end{para}

\begin{defn}
  A torsion free $\ms X$ twisted sheaf $\ms V$ is \emph{stable\/} if
  for every subsheaf $\ms W\subset\ms V$ we have
$$\mu(\ms W)<\mu(\ms V).$$
\end{defn}

As a special case of 2.2.7.22 of \cite{twisted-moduli}, it follows
that if $\ms F$ is a flat family of coherent $\ms X$-twisted sheaves
parametrized by $T$ with trivialized determinant, then the function
$t\mapsto\deg c_2(\ms F_t)$ is locally constant on $T$.  Moreover, by
Proposition 4.3.1.2 of \cite{more-moduli}, we have that stability is
an open condition in a flat family of torsionfree $\ms X$-twisted
sheaves.

\begin{notn}
  Let $\Tw_{\ms X/k}(n,\ms O,c)\subset\Tw_{\ms X/k}(n,\ms O)$ denote
  the open and closed substack parametrizing families such that $\deg
  c_2(\ms F_t)=c$ in each geometric fiber.  Let $\Tw^s_{\ms X/k}(n,\ms
  O,c)\subset\Tw_{\ms X/k}(n,\ms O,c)$ denote the open substack whose
  objects over $T$ are families $\ms F$ such that the fiber $\ms F_t$
  is stable for each geometric point $t\to T$.
\end{notn}

\begin{lem}
  A locally free $\ms X$-twisted sheaf $\ms V$ is stable if and only
  if the Azumaya algebra $\pi_\ast\send(\ms V)$ is stable.
\end{lem}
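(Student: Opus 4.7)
The plan is to reduce stability of $\ms A$ to stability of $\ms V$ via the Morita-type equivalence afforded by the splitting bundle $\ms V$. Since $\ms V$ is a locally free $\ms X$-twisted sheaf of rank $n$, the functor $\ms F\mapsto\pi_{\ast}\shom(\ms V,\ms F)$ is an equivalence from coherent $\ms X$-twisted sheaves to coherent right $\ms A$-modules; this is a twisted version of the classical Morita equivalence, compatible with the usual one \'etale locally on $X$. Under it $\ms V$ itself corresponds to $\pi_{\ast}\send(\ms V)=\ms A$ as a right $\ms A$-module, so taking $\ms A$-submodules of $\ms A$ yields a bijection $\ms W\mapsto\ms I_{\ms W}:=\pi_{\ast}(\ms V^{\vee}\otimes\ms W)$ between subsheaves $\ms W\subset\ms V$ and right ideals $\ms I\subset\ms A$.

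Next I would check that this bijection carries the stability data on one side to the other. The rank comparison is immediate: if $\rk\ms W=r$, then $\ms V^{\vee}\otimes\ms W$ has $\m_{n}$-weight zero and rank $nr$ on $\ms X$, so it descends to a sheaf of rank $nr$ on $X$; thus $\rk\ms I_{\ms W}<\rk\ms A=n^{2}$ if and only if $r<n$. For the slope comparison, one has $c_{1}(\ms V^{\vee}\otimes\ms W)=n\cdot c_{1}(\ms W)-r\cdot c_{1}(\ms V)$ as classes on $\ms X$, and using the normalization $\deg=nd$ on $\ms X$ together with the compatibility of this normalized degree with the usual degree on $X$ for classes descended along $\pi$, a short computation with the pulled-back ample divisor yields
\[
\mu(\ms I_{\ms W})=\mu(\ms W)-\mu(\ms V).
\]

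Combining the two ingredients, $\mu(\ms I_{\ms W})<0$ if and only if $\mu(\ms W)<\mu(\ms V)$, and non-zero right ideals of $\ms A$ of rank $<n^{2}$ correspond bijectively to non-zero subsheaves of $\ms V$ of rank $<n$; the equivalence of the two stability conditions follows. The step I expect to require the most care is the numerical compatibility --- verifying that the normalized degree on $\ms X$ behaves correctly under pushforward to $X$ so that the slope identity above holds as stated --- while the rest is a formal consequence of the Morita equivalence provided by $\ms V$.
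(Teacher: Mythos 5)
Your proposal is correct and follows essentially the same route as the paper: the paper's (much terser) proof likewise rests on the two facts that every right ideal of $\ms A$ has the form $\shom(\ms V,\ms W)$ for a subsheaf $\ms W\subset\ms V$ and that $\mu(\shom(\ms V,\ms W))=\mu(\ms W)-\mu(\ms V)$. You have simply made explicit the Morita equivalence and the rank/degree bookkeeping that the paper leaves as a "straightforward computation."
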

\begin{proof}
  Given a subsheaf $\ms W\subset\ms V$, a straightforward computation
  shows that $\mu(\shom(\ms V,\ms W))=\mu(\ms W)-\mu(\ms V)$.  On the
  other hand, any right ideal of $\ms A$ has the form $\shom(\ms V,\ms
  W)$ for a subsheaf $\ms W\subset\ms V$.  The result follows.
\end{proof}

\begin{para}
Suppose $B\to S$ is a $k$-scheme and $T\to X_B$ is a $\PGL_n$-torsor.
Let $\ms A$ be the locally free sheaf (of rank $n^2$) associated to
the adjoint torsor (which is a $\GL_{n^2}$-torsor).  Using the
Riemann-Roch theorem, the invariance of Euler characteristic in a flat
family, and the fact that $\det\ms A\cong\ms O$, we see
that the function $b\mapsto\deg c_2(\ms A_b)$ is locally constant on
$B$.  This provides a numerical invariant of a $\PGL_n$-torsor which
is constant in a family.  Given a $\m_n$-gerbe $\ms X\to X$ and an
integer $c$, let $(f_{\ast}^{\ms X}\B\PGL_n)(c)$ be the substack of
$f_{\ast}^{\ms X}\B\PGL_n$ parametrizing families where the locally
free sheaf associated to the adjoint bundle has $\deg c_2=c$ in every
fiber.

Thus, there is a decomposition
$$f_{\ast}\B\PGL_n=\sqcup_{\ms X}\sqcup_c(f^{\ms
  X}_{\ast}\B\PGL_n)(c),$$ where the first disjoint union is taken
over a set of $\m_n$-gerbe representatives for $\H^2(X,\m_n)$ and the
second is taken over $\Z$.  Similarly, there is a decomposition
$$f_\ast(\B\PGL_n)^s=\sqcup_{\ms X}\sqcup_c(f^{\ms X}_\ast(\B\PGL_n)^s$$
of stable loci.
\end{para}

\begin{lem}\label{sec:stability-torsors}
  Given an integer $c$, the closed and open substack $\Tw_{\ms
    X/S}(n,\ms O,c/2n)$ is equal to the preimage of its image in $\ms
  M^{\ms X}_n$.  Similarly, $\Tw^s_{\ms X/k}(n,\ms O,c/2n)$ is equal
  to the preimage of its image in $\ms M^{\ms X}_n$.
\end{lem}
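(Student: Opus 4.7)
The plan is to leverage the identification $\ms M_n^{\ms X} \cong \GAz_{\ms X/S}(n)$ established in Section \ref{sec:comp}, under which the covering $\Tw_{\ms X/S}(n,\ms O) \to \ms M_n^{\ms X}$ becomes the map $\ms F \mapsto \rsend(\ms F)$. Lemma \ref{L:isoms-check}(2) says that any isomorphism $\rsend(\ms F) \simto \rsend(\ms G)$ comes (\'etale-locally on the base) from an isomorphism $\ms F \simto \ms L \tensor \ms G$ for some invertible sheaf $\ms L$ on $X$. Taking determinants and using the trivializations $\det\ms F \simto \ms O$ and $\det\ms G \simto \ms O$ gives $\ms L^{\otimes n} \cong \ms O$. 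Consequently, two families in $\Tw_{\ms X/S}(n,\ms O)$ over $T$ which map to the same section of $\ms M_n^{\ms X}(T)$ differ, after \'etale refinement of $T$, by tensoring with an $n$-torsion line bundle pulled back from $X_T$.

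Next I would verify that the invariants ``$\deg c_2 = c/2n$'' and ``fiberwise stability'' are preserved under tensoring with such an $\ms L$. For $c_2$, I would use the splitting-principle identity
\[ c_2(\ms L \tensor \ms G) = c_2(\ms G) + (n-1)\, c_1(\ms L) \cdot c_1(\ms G) + \tbinom{n}{2} c_1(\ms L)^2 \]
in each geometric fiber: the trivialized determinant forces $c_1(\ms G) = 0$, while the relation $n\, c_1(\ms L) = 0$ in $\Pic$ yields $n^{2} \deg c_1(\ms L)^{2} = 0$, hence $\deg c_1(\ms L)^{2} = 0$ rationally. So $\deg c_2(\ms L \tensor \ms G)_{t} = \deg c_2(\ms G_{t})$ at every geometric point $t$. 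For stability, tensoring with $\ms L$ is an autoequivalence of coherent $\ms X$-twisted sheaves that preserves the subsheaf lattice and shifts the slope of every subsheaf by the same constant, so the slope inequalities defining stability are preserved.

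The two checks together handle the claim: if $\ms F$ is a family in $\Tw_{\ms X/S}(n,\ms O)(T)$ whose image in $\ms M_n^{\ms X}(T)$ factors through the image of $\Tw_{\ms X/S}(n,\ms O,c/2n)$, then \'etale-locally on $T$ there is a family $\ms G \in \Tw_{\ms X/S}(n,\ms O,c/2n)$ and an $n$-torsion line bundle $\ms L$ with $\ms F \cong \ms L \tensor \ms G$. Since $\deg c_2$ is locally constant in flat families (as noted in Section \ref{S:stability}) and is invariant under this tensoring pointwise, $\ms F$ has $\deg c_2(\ms F_t) = c/2n$ at every $t \to T$, so $\ms F \in \Tw_{\ms X/S}(n,\ms O,c/2n)$. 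The stable case is identical, using that stability is a fiberwise open condition preserved by line-bundle twists.

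The main obstacle is really the first step---articulating the fibers of $\Tw_{\ms X/S}(n,\ms O) \to \ms M_n^{\ms X}$ cleanly in families. Once the generalized Azumaya algebra interpretation of Section \ref{S:genaz} and Lemma \ref{L:isoms-check}(2) are invoked, the remaining content is a routine combination of a rational Chow-theoretic computation on the gerbe and the elementary fact that slope-stability is insensitive to twisting by a line bundle.
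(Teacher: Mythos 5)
Your proposal is correct and follows essentially the same route as the paper: the paper likewise identifies the fiber of $\Tw_{\ms X/S}(n,\ms O)\to\ms M^{\ms X}_n$ over a point $[\ms F]$ with the twists $\ms F\tensor\ms L$ for $\ms L\in\sPic_{X/S}[n]$, notes that such torsion $\ms L$ do not change the rational Chern classes, and observes that stability is insensitive to twisting by a line bundle. You spell out the two points the paper leaves implicit (the description of the fiber via Lemma \ref{L:isoms-check} and the splitting-principle computation of $c_2$), which is a welcome elaboration rather than a different argument.
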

\begin{proof}
  Given a point $p$ of $\ms M^{\ms X}_n$ which lifts into $[\ms
  F]\in\Tw_{\ms X/S}(n,\ms O,c/2n)$, it is easy to see that the full
  preimage of $p$ in $\Tw_{\ms X/S}(n,\ms O)$ is given by the twists
  $\ms F\tensor\ms L$ with $\ms L\in\sPic_{X/S}[n]$.  But these have
  the same (rational) Chern classes as $\ms F$, as $\ms L$ is trivial,
  so they also lie in $\Tw_{\ms X/S}(n,\ms O,c/2n)$.  The second
  statement follows from the fact that $\ms F$ is stable if and only
  if $\ms F\tensor\ms L$ is stable for an invertible sheaf $\ms L$.
\end{proof}

\begin{cor}
  There is an open substack $f_\ast(\B\PGL_n)^s\subset f_\ast(\B\PGL_n)$
  parametrizing families $P\to X_T$ of $\PGL_n$-torsors such that for
  all geometric points $t\to T$ the fiber $P_t\to X_t$ is a stable
  $\PGL_n$-torsor.
\end{cor}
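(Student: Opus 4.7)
The plan is to descend the known openness of stability from the twisted-sheaf cover to the compactification, and then restrict to the locally free locus. Using the decomposition
$$f_\ast\B\PGL_n = \bigsqcup_{\ms X}\bigsqcup_c (f^{\ms X}_\ast\B\PGL_n)(c)$$
recorded above, it suffices to produce the desired open substack on each piece $(f^{\ms X}_\ast\B\PGL_n)(c)$ and take the union over all pairs. So I fix $\ms X$ and $c$, and work with the universally closed and submersive surjection
$$q\colon \Tw_{\ms X/k}(n,\ms O,c/2n) \twoheadrightarrow \ms M^{\ms X}_n(c)$$
together with the open immersion $(f^{\ms X}_\ast\B\PGL_n)(c) \hookrightarrow \ms M^{\ms X}_n(c)$ from paragraph \ref{P:pure-mod}.

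On the twisted-sheaf side, openness of the stable locus $\Tw^s_{\ms X/k}(n,\ms O,c/2n) \subset \Tw_{\ms X/k}(n,\ms O,c/2n)$ is already available from the results of \cite{more-moduli} cited in Section \ref{sec:tw-sh}. By Lemma \ref{sec:stability-torsors}, this open substack is saturated for $q$, so the submersivity of $q$ forces its image $U \subset \ms M^{\ms X}_n(c)$ to be open. Setting
$(f^{\ms X}_\ast\B\PGL_n)^s(c) := (f^{\ms X}_\ast\B\PGL_n)(c) \cap U$
and letting $f_\ast(\B\PGL_n)^s$ be the union over all $(\ms X,c)$ produces an open substack of $f_\ast\B\PGL_n$; what remains is to check that its geometric points are precisely those families whose fibers are stable in the sense of Definition \ref{D:stab}.

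This identification I would argue fibrewise. Given a geometric point $t$ of $(f^{\ms X}_\ast\B\PGL_n)(c)$ with associated adjoint Azumaya algebra $\ms A_t$, a locally free $\ms X$-twisted sheaf $\ms V$ of rank $n$ with trivialized determinant and $\ms A_t \cong \pi_\ast\send(\ms V)$ exists étale-locally on $t$ (any two such liftings differ by tensoring with an $n$-torsion line bundle, which preserves both stability and the trivialization of the determinant); the lemma identifying stability of $\ms V$ with stability of $\pi_\ast\send(\ms V)$ then matches the locus $U$ with the stable-torsor locus on geometric points. The principal obstacle is the step deducing openness of $U$ from saturation plus submersivity of $q$: one must know that a representable submersion of Artin stacks descends openness of saturated substacks. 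This reduces to the analogous classical statement for an étale atlas of $\ms M^{\ms X}_n(c)$, after which the argument is a formal translation through the rigidification formalism established above.
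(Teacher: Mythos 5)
Your argument is correct and is essentially the paper's own proof: the paper likewise combines the saturation statement of Lemma \ref{sec:stability-torsors} with openness of the stable locus in $\Tw_{\ms X/k}(n,\ms O)$ and the universal submersivity of the covering $\Tw_{\ms X/k}(n,\ms O)^{\lf}\to f_\ast(\B\PGL_n)$, the only (cosmetic) difference being that you pass through $\ms M^{\ms X}_n(c)$ and intersect with the open substack of torsors rather than restricting to the locally free cover at the outset. The step you flag as the ``principal obstacle'' is not one: openness of the image of a saturated open substack is immediate from the definition of a submersive morphism.
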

\begin{proof}
  This follows from Lemma \ref{sec:stability-torsors} and the fact
  that $\Tw_{\ms X/k}(n,\ms O)^{\lf}\to f_{\ast}(\B\PGL_n)$ is
  universally submersive.
\end{proof}

Since $\Tw_{\ms X/S}(n,\ms O,c/2n)$ is open (and closed) in $\Tw_{\ms
  X/S}(n,\ms O)$, it follows that there is a well-defined open (and
closed) substack $\ms M^{\ms X}_n(c)$ whose preimage is $\Tw_{\ms
  X/S}(n,\ms O,c/2n)$.  There is an open substack $\ms M^{\ms
  X}_n(c)^s$ whose preimage is $\Tw^s_{\ms X/k}(n,\ms O,c/2n)$.  We
have that each $\ms M^{\ms X}_n(c)$ is quasi-proper and that there is
an open immersion $f^{\ms X}_{\ast}\B\PGL_n(c)\inj\ms M^{\ms X}_n(c)$
and an open immersion $f^{\ms X}_\ast\B\PGL_n(c)^s\inj\ms M^{\ms
  X}_n(c)^s$.  Moreover, each $\ms M^{\ms X}_n(c)$ is covered by
$\Tw_{\ms X/S}(n,\ms O,c/2n)$ in such a way that the fibers are
locally $\m_n$-gerbes over $\Pic_{X/S}[n]$-torsors, and similarly for
the open substacks parametrizing stable objects.  This covering
restricts to a covering of $(f^{\ms X}_{\ast}\B\PGL_n)(c)^s$ by
$\Tw^s_{\ms X/S}(n,\ms O,c/2n)^{\lf}$.

In particular, we have that $\ms M^{\ms X}_n(c)^s$ is irreducible,
separated, \ldots (resp.\ has any local property stable for the
\'etale topology) if (resp.\ if and only if) the same is true for
$\Tw^s_{\ms X/S}(n,\ms O,c/2n)$.  Moreover, there is a virtual
fundamental class for $\ms M^{\ms X}_n(c)^s$ if and only if there is one
for $\Tw^s_{\ms X/S}(n,\ms O,c/2n)$.

\begin{notn}
  Let $\GAz_{\ms X/S}(n)^s$ denote the open substack parametrizing stable
  generalized Azumaya algebras via the isomorphism $\GAz_{\ms
    X/S}(n)\simto(\ms M^{\ms X}_n)^s$ of Section \ref{sec:comp}.
\end{notn}

\subsection{Structure of moduli of twisted sheaves}

The following results show that infinitely many of the spaces
$\Tw^s_{\ms X/k}(n,\ms O,\gamma)$ are non-empty.  It is a geometric
restatement of the fundamental result of de Jong \cite{dejong-per-ind}
on the period-index problem for Brauer classes over functionf ields of
algebraic surfaces.

\begin{lem}\label{L:dejong}
  There is a stable locally free $\ms X$-twisted sheaf of rank $n$.
\end{lem}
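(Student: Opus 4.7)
The strategy is to combine de Jong's period-index theorem with a minimality-of-rank argument that forces automatic stability. By de Jong's theorem \cite{dejong-per-ind}, the period and index coincide for Brauer classes on function fields of smooth projective surfaces over algebraically closed fields; geometrically, this translates to the statement that the minimal rank of a locally free $\ms X$-twisted sheaf equals the order of $[\ms X]$ in $\H^2(X,\G_m)$. To pass from de Jong's generic statement to a global object, one extends a generic division-algebra structure from a dense open subset of $X$ and takes a reflexive hull on the smooth two-dimensional Deligne--Mumford stack $\ms X$; the Auslander--Buchsbaum formula forces the reflexive hull to be locally free.

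The essential case is when $[\ms X]$ has period exactly $n$, so that $\ind([\ms X])=n$. Let $\ms V$ be a locally free $\ms X$-twisted sheaf of rank $n$ produced as above. I claim $\ms V$ is automatically $\mu$-stable in the sense of Definition \ref{D:stab}. Indeed, suppose $\ms W\subset\ms V$ is a non-zero subsheaf with $0<\rk\ms W=:r<n$. Replace $\ms W$ by its saturation in $\ms V$ and then by its reflexive hull $\ms W^{\vee\vee}$ on $\ms X$; by the same Auslander--Buchsbaum argument, $\ms W^{\vee\vee}$ is a locally free $\ms X$-twisted sheaf, now of rank $r<n=\ind([\ms X])$, contradicting the minimality of the index. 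Hence $\ms V$ has no proper saturated subsheaves of intermediate rank at all, which is strictly stronger than $\mu$-stability.

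If $[\ms X]$ has period $d$ strictly less than $n$, the argument above produces only a stable locally free twisted sheaf $\ms V_0$ of rank $d$, and one must promote it to a stable rank-$n$ object. A natural candidate is $\ms V_0\otimes E$ for a $\mu$-stable rank-$(n/d)$ vector bundle $E$ on $X$; stability of the tensor product would follow from the generic simplicity of $\ms V_0$ together with the stability of $E$. This is the main technical obstacle in the $d<n$ case, and may alternatively be handled via elementary transformations \`a la O'Grady or by invoking the existence results for stable twisted sheaves of arbitrary rank developed in \cite{twisted-moduli}. In either case, the crucial input that makes the whole argument go through is de Jong's theorem, which produces the rank-$d$ starting sheaf and simultaneously pins down the numerology that forces stability of the minimal-rank object.
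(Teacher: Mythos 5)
Your proposal is correct and essentially complete in the special case where the image of $[\ms X]$ in $\H^2(X,\G_m)$ has order exactly $n$, but it has a genuine gap in the general case, which the lemma requires. Note first that the paper itself offers no argument here beyond a citation (to Theorem 4.2.2.3 of \cite{period-index-paper} and Proposition 5.1.2 of \cite{more-moduli}), so you are attempting more than the text does. Your maximal-period argument is sound: de Jong gives $\ind=\per=n$ at the generic point, extending a generic twisted vector space and taking a reflexive hull on the smooth two-dimensional gerbe produces a locally free $\ms X$-twisted sheaf of rank $n$, and any subsheaf of rank $r$ with $0<r<n$ would restrict at the generic point to an $r$-dimensional twisted vector space, contradicting $\ind=n$; so there are simply no subsheaves of intermediate rank and stability is vacuous. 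This is precisely the observation the paper records in the unnumbered lemma of Section \ref{sec:potent-appl-new}. (One does not even need the reflexive hull of the subsheaf for this step; restriction to the generic point suffices.)

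The gap is the case $d:=\ind([\ms X])<n$, which cannot be avoided: the lemma is asserted for an arbitrary $\m_n$-gerbe, and already when the Brauer class is trivial it amounts to the (nontrivial, classical) existence of stable rank-$n$ bundles on an arbitrary smooth projective surface. Your proposed fix --- tensoring a stable rank-$d$ twisted sheaf $\ms V_0$ with a $\mu$-stable bundle $E$ of rank $n/d$ --- does not work as stated. Even in characteristic zero the tensor product of two $\mu$-stable sheaves is only $\mu$-polystable and can be properly decomposable (simplicity of the factors does not imply simplicity of the product, as $\End(\ms V_0\otimes E)=\H^0(\send(\ms V_0)\otimes\send(E))$ can be large), so the claim that ``stability of the tensor product would follow from the generic simplicity of $\ms V_0$ together with the stability of $E$'' is false as a general principle; and in positive characteristic, which the paper allows (only $n$ is assumed invertible), even semistability of a tensor product can fail unless one works with strongly semistable sheaves. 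The cited sources handle the general rank by a different mechanism (producing a rank-$n$ locally free twisted sheaf and then modifying it, in the spirit of elementary transformations, to force stability), which is the content hidden behind the paper's citation. Your fallback of invoking the existence results of \cite{twisted-moduli} is legitimate and parallels what the paper itself does, but then the case division and the tensor-product construction are not doing the work; the honest statement is that the $d<n$ case is being quoted, not proved.
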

A proof of this result may be found in Theorem 4.2.2.3 of
\cite{period-index-paper} and Proposition 5.1.2 of \cite{more-moduli}.

\begin{lem}\label{L:induct}
  Suppose $\ms F$ is a coherent $\ms X$-twisted sheaf of rank $n$ with
  $\deg c_2(\ms F)=\gamma$ and $\det\ms F\cong\ms O$.  For each
  integer $\ell\geq 0$, there is a (noncanonical) subsheaf $\ms
  F_{\ell}\subset\ms F$ such that $\dim\ms F/\ms F_{\ell}=0$, 
  $\det\ms F_{\ell}\cong\ms O$, and $\deg c_2(\ms
  F_{\ell})=\gamma+\ell$.  If $\ms F$ is stable then so is $\ms F_\ell$.
\end{lem}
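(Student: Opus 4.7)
The plan is to obtain $\ms F_\ell$ by a sequence of $\ell$ elementary transformations, each removing a twisted skyscraper of minimal length at a single closed point. Set $\ms F_0 := \ms F$ and proceed by induction on $\ell$: assume $\ms F_{\ell-1}\subset\ms F$ has been constructed with the asserted properties. Since $\ms F/\ms F_{\ell-1}$ has dimension zero and $\ms F$ is torsion free (implicit from the hypothesis $\det\ms F\cong\ms O$ and its role inside $\Tw_{\ms X/k}(n,\ms O,\cdot)$), the sheaf $\ms F_{\ell-1}$ is torsion free of rank $n$, so its non-locally-free locus is a finite set of closed points.

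Pick a closed point $x\in X$ outside this finite set, together with an \'etale neighborhood $U\to X$ of $x$ trivializing $\ms X$. Under the resulting equivalence between $\ms X|_U$-twisted sheaves and ordinary coherent sheaves on $U$ (matching $\m_n$-equivariant sheaves of weight $1$ with ordinary sheaves), $\ms F_{\ell-1}|_U$ becomes an honest locally free rank-$n$ sheaf. Choose any surjection onto the residue field at a point above $x$; transport back to a surjection of $\ms X|_U$-twisted sheaves onto a skyscraper twisted sheaf $Q$ supported at $x$, and extend by zero to obtain a global surjection $\ms F_{\ell-1}\twoheadrightarrow Q$ on all of $\ms X$. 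Set $\ms F_\ell:=\ker(\ms F_{\ell-1}\to Q)$; the resulting short exact sequence
\begin{equation*}
0\to\ms F_\ell\to\ms F_{\ell-1}\to Q\to 0
\end{equation*}
shows that $\ms F/\ms F_\ell$ is zero-dimensional. Since $Q$ is zero-dimensional on a smooth surface it admits a finite locally free resolution whose alternating determinant is trivial, so $\det Q\cong\ms O$ and hence $\det\ms F_\ell\cong\det\ms F_{\ell-1}\cong\ms O$. A Chern character calculation using the normalization from Section \ref{S:stability} (the cycle class of a closed point has $\deg=1$) gives $\deg c_2(\ms F_\ell)=\deg c_2(\ms F_{\ell-1})+1=\gamma+\ell$, closing the induction.

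For the stability assertion, suppose $\ms F$ is stable and let $\ms G\subsetneq\ms F_\ell$ be a nonzero subsheaf. Since $\ms F_\ell\subsetneq\ms F$, one has $\ms G\subsetneq\ms F$, so stability of $\ms F$ forces $\mu(\ms G)<\mu(\ms F)=0=\mu(\ms F_\ell)$ (both total slopes vanish because the determinants are trivial). Hence $\ms F_\ell$ is stable.

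The only real subtlety is the bookkeeping of the normalized $\deg c_2$ under the elementary transformation: one must check that the twisted skyscraper $Q$ produced from a single residue-field quotient in the local trivialization has exactly $\deg ch_2(Q)=1$ under the paper's normalization. This is immediate from the correspondence between twisted and ordinary sheaves on a gerbe trivialization together with the stated convention on degrees; no deeper input is required.
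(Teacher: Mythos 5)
Your construction coincides with the paper's: $\ms F_\ell$ is built by iterated elementary transformations, each given by a rank-one twisted quotient $Q$ supported at a point where the sheaf is locally free, and the shift $\deg c_2\mapsto\deg c_2+1$ comes from multiplicativity of Chern classes once one knows $\deg c_2(Q)=-1$. Two remarks. The one step you declare ``immediate'' is precisely the one the paper does not treat as such: it invokes a Grothendieck--Riemann--Roch computation on the gerbe (Lemma 3.2.4.8 of \cite{twisted-moduli}) to evaluate $\deg\chern_2(Q)$. An \'etale trivialization of $\ms X$ near $x$ identifies $Q$ with an ordinary length-one skyscraper, but the Chern class lives in $A_0(\ms X)_{\Q}$ and the degree is the global normalization $\deg=nd$; one still has to identify the class of $Q$ with the fundamental class of the residue gerbe over $x$ (of $d$-degree $1/n$), which is the content of the cited lemma. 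This is a gap of justification rather than an error. On stability your route genuinely differs from, and is more direct than, the paper's: you note that any subsheaf of $\ms F_\ell$ of rank $<n$ is also a subsheaf of $\ms F$, and that $\mu(\ms F_\ell)=\mu(\ms F)=0$ because both determinants are trivial. The paper instead observes that $\ms F$ and $\ms F_\ell$ agree in codimension one, so their saturated subsheaves (which suffice for testing slope stability) coincide and have identical degrees. Your argument yields exactly the implication stated in the lemma; the paper's yields the equivalence. Both are valid.
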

\begin{proof}
  By induction, it suffices to construct $\ms F_1$.  Choose a point
  $x\in X(k)$ around which $\ms F$ is locally free and let $\ms
  F\tensor\kappa(x)\surj\ms Q$ be a quotient with geometric fiber of
  dimension $1$.  (In other words, given an algebraically closed
  extension field $L/\kappa(x)$ and a map $\spec L\to\ms
  X\tensor\kappa(x)$, the pullback of $\ms Q$ to $\spec L$ is the
  sheaf associated to a one-dimensional vector space.)  We claim that
  $\deg(c_2(\ms Q))=-1$, from which the result follows
  by the multiplicativity of the total Chern polynomial.
  A proof of the claim uses the Grothendieck-Hirzebruch-Riemann-Roch
  theorem for representable morphisms of Deligne-Mumford stacks and
  can be found in the proof of Lemma 3.2.4.8 of \cite{twisted-moduli}
  (where there is an unfortunate sign error in the statement, even
  though the proof is correct!). 

  To deduce stability of $\ms F_1$ from stability of $\ms F$, first
  note that the two sheaves agree in codimension $1$.  Since stability
  depends on a calculation of degree and this calculation depends only
  on a sheaf in codimension $1$, we see that we need only quantify
  over saturated subsheaves (see Definition 1.1.5 of \cite{h-l}),
  which are determined by their values in codimension $1$.  Thus, the
  criterion determining stability of $\ms F$ and $\ms F_1$ quantifies
  over the same set of subsheaves with the same numerical calculations.
\end{proof}

\begin{cor}\label{C:non-empty}
  If $\Tw^s_{\ms X/k}(n,\ms O,\gamma)$ is non-empty then so is $\Tw^s_{\ms
    X/k}(n,\ms O,\gamma+\ell)$ for all integers $\ell\geq 0$.
\end{cor}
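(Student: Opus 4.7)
The plan is to invoke Lemma \ref{L:induct} essentially directly; the corollary records the fact that its output still lives in the moduli stack we care about. Starting from the hypothesis that $\Tw^s_{\ms X/k}(n,\ms O,\gamma)$ is non-empty, I would pick a $k$-point $[\ms F]$, i.e.\ a stable totally pure $\ms X$-twisted sheaf of rank $n$ with $\det\ms F\cong\ms O$ and $\deg c_2(\ms F)=\gamma$. Applying Lemma \ref{L:induct} to $\ms F$ with the given integer $\ell$ then produces a subsheaf $\ms F_\ell\subset\ms F$ with $\dim\ms F/\ms F_\ell=0$, $\det\ms F_\ell\cong\ms O$, $\deg c_2(\ms F_\ell)=\gamma+\ell$, and $\ms F_\ell$ stable.

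Next I would check that $\ms F_\ell$ actually represents a $k$-point of the target moduli stack $\Tw^s_{\ms X/k}(n,\ms O,\gamma+\ell)$. Since $\ms F/\ms F_\ell$ has $0$-dimensional support, the generic rank of $\ms F_\ell$ is still $n$. Total purity transfers from $\ms F$ to $\ms F_\ell$: the two sheaves agree in codimension one, so any embedded prime of $\ms F_\ell$ or failure of $\ms O\to\send(\ms F_\ell)$ to be injective would already be visible in $\ms F$. Combined with the trivialization of the determinant and the stability provided by Lemma \ref{L:induct}, this places $\ms F_\ell$ in $\Tw^s_{\ms X/k}(n,\ms O,\gamma+\ell)$, as desired.

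All of the real work has been carried out in Lemma \ref{L:induct}—particularly the Grothendieck--Hirzebruch--Riemann--Roch computation identifying $\deg c_2$ of a one-dimensional twisted quotient fiber as $-1$, and the observation that stability is determined by the sheaf in codimension one so that an elementary modification by a zero-dimensional quotient cannot destroy stability. Given these inputs, no further obstacle remains; the present corollary is essentially a bookkeeping statement that iterating (or equivalently applying Lemma \ref{L:induct} in its stated $\ell$-parameter form) stays inside the appropriate connected component of the moduli of stable twisted sheaves.
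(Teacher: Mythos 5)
Your argument is correct and coincides with the paper's intended (and essentially immediate) deduction: pick a stable sheaf representing a point of $\Tw^s_{\ms X/k}(n,\ms O,\gamma)$, apply Lemma \ref{L:induct}, and observe that the resulting $\ms F_\ell$ retains rank, trivialized determinant, total purity (as a subsheaf of $\ms F$ agreeing with it in codimension one), and stability. The paper treats the corollary as following directly from Lemma \ref{L:induct}, so your write-up just makes explicit the same bookkeeping.
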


The fundamental structure theorem concerning these moduli spaces is
the following.

\begin{thm}\label{T:asymp}
  There exists a constant $C$ such that for all $\gamma\geq C$, 
\begin{enumerate}
\item the open substack $\Tw^s_{\ms X/k}(n,\ms
  O,\gamma)^{\lf}\subset\Tw^s_{\ms X/k}(n,\ms O,\gamma)$ is
  schematically dense;
\item $\Tw^s_{\ms
    X/k}(n,\ms O,\gamma)$ is an irreducible proper normal lci tame
  Deligne-Mumford stack over $k$ whenever it is non-empty;
\item it is non-empty for infinitely many $\gamma$.
\end{enumerate}
\end{thm}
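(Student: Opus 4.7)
The plan is to establish the three assertions in the order reflecting their logical dependencies. Item (3) follows immediately from what is already on the page: Lemma \ref{L:dejong} gives a stable locally free $\ms X$-twisted sheaf of rank $n$ with trivialized determinant for some value of $\gamma$, and Corollary \ref{C:non-empty} then promotes this to non-emptiness for all sufficiently large $\gamma$.

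For the local structure portions of (2), I would compute the deformation theory of a stable $\ms X$-twisted sheaf $\ms F$ with trivialized determinant in terms of the traceless Ext groups $\ext^i_0(\ms F,\ms F)$. Stability together with Serre duality on $\ms X$ forces $\hom_0(\ms F,\ms F)=0$, so the automorphism group of every stable point reduces to $\m_n$ coming from the gerbe structure; since $n$ is invertible, this yields the tame Deligne--Mumford property. Quasi-properness is inherited from the compactification of Section \ref{sec:compac-by-rig}, while separatedness on the stable locus follows from a standard Langton-style argument adapted to twisted sheaves. A Riemann--Roch computation on $\ms X$ gives the virtual dimension as a linear function of $\gamma$, and once one shows that $\ext^2_0$ vanishes generically (a Bogomolov-type inequality in the twisted setting), the perfect obstruction theory cuts out a generically smooth lci stack of the expected dimension; normality follows from Serre's criterion combined with the lci property.

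For item (1), schematic density of the locally free locus is proved by estimating the dimension of the complement. I would stratify the boundary by the length $\ell$ of $\ms F^{\vee\vee}/\ms F$: each stratum fibers over a moduli of stable locally free $\ms X$-twisted sheaves of the same rank but with $c_2$ decreased by $\ell$ (Lemma \ref{L:induct} read in reverse), with fiber a stratum of a relative Quot scheme whose dimension is controlled in terms of $\ell$. Since the expected dimension of the main component grows like $2n\gamma$ while the dimension of each boundary stratum grows more slowly in $\gamma$, for $\gamma$ larger than some explicit $C$ every boundary stratum has positive codimension. Irreducibility in (2) is then obtained by an induction on $\gamma$ in the spirit of O'Grady: any component must meet the (now open, dense) locally free locus, and elementary modifications (again Lemma \ref{L:induct}) produce degenerations connecting distinct components across values of $\gamma$, reducing irreducibility at large $\gamma$ to a base case handled by direct analysis.

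The principal obstacle is the set of numerical estimates underpinning the boundary dimension count and the generic vanishing of $\ext^2_0$. In the twisted setting one must compute Chern classes and degrees using the rational Chow theory of the gerbe $\ms X$ with its normalized degree function, and verify a Bogomolov-type bound sufficient to control $\ext^2_0$ asymptotically. These are precisely the estimates of \cite{more-moduli} alluded to at the end of Section \ref{S:stability}; carrying them out produces the effective constant $C$ and closes the O'Grady induction, so that the statement reduces, in spirit, to its untwisted counterpart as developed in Chapter 9 of \cite{h-l}.
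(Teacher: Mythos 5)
Your outline is essentially the argument the paper relies on: part (3) is exactly the paper's proof (Lemma \ref{L:dejong} together with Corollary \ref{C:non-empty}), while for parts (1) and (2) the paper simply cites Theorem 3.2.4.11 of \cite{twisted-moduli}, whose proof is precisely the O'Grady-style scheme you describe --- generic vanishing of traceless $\ext^2$, Quot-scheme stratification of the non-locally-free boundary with codimension growing in $\gamma$, and the resulting induction yielding density, lci, normality, and irreducibility. Your sketch, like the paper, defers the hard quantitative estimates to the references, so it is a faithful plan rather than a self-contained proof; the only slips are cosmetic (simplicity of stable sheaves, not Serre duality, kills $\hom_0(\ms F,\ms F)$, and the Langton-type argument gives the existence half of properness rather than separatedness, which follows from simplicity).
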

\begin{proof}
The third statement follows immediately from Corollary \ref{C:non-empty}.  For
the proof of the first and second, the reader is referred to paragraph 3.2.4.1
(and especially Theorem 3.2.4.11) of \cite{twisted-moduli}.
\end{proof}

\begin{para}\label{coarse}
Since every object of $\Tw^s_{\ms X/k}(n,\ms O,\gamma)$ is
geometrically stable, it is simple (see, e.g., Corollary 1.2.8 and
Theorem 1.6.6 of \cite{h-l}), i.e., its automorphisms are simply given
by multiplication by scalars (in $\m_n$, since the determinant is
trivialized).  It follows that $\Tw^s_{\ms X/k}(n,\ms O,\gamma)$ is a
$\m_n$-gerbe over its coarse moduli space $\mTw^s_{\ms X/k}(n,\ms
O,\gamma)$.  
\end{para}

\subsection{Consequences for $\ms M^{\ms X}_n$ and $f^{\ms X}_\ast\B\PGL_n$}
\label{S:conseq}

\begin{thm}\label{T:main}
  There is a constant $D$ such that for all $c\geq D$, 
\begin{enumerate}
\item the open substack $f_{\ast}^{\ms X}\B\PGL_n(c)^s$ is
  schematically dense in $\ms M^{\ms X}_n(c)^s$;
\item $\ms M^{\ms
    X}_n(c)^s$ is an irreducible proper normal lci tame Deligne-Mumford
  stack over $k$ whenever it is non-empty;
\item it is non-empty
  for infinitely many $c$.
\end{enumerate}
In particular, the open substack $(f_{\ast}^{\ms X}\B\PGL_n)(c)^s$ is
irreducible (and non-empty for infinitely many $c$).
\end{thm}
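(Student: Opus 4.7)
The plan is to transfer the conclusions of Theorem \ref{T:asymp} across the covering
\[
\Tw^s_{\ms X/k}(n,\ms O,c/2n)\longrightarrow \ms M^{\ms X}_n(c)^s
\]
recorded at the end of Section \ref{S:stability}. I would take $D = 2nC$ where $C$ is the constant from Theorem \ref{T:asymp}, and begin by observing that this covering is representable, faithfully flat, and smooth: its fibers are locally $\m_n$-gerbes over $\Pic_{X/S}[n]$-torsors, and since $n$ is invertible in $k$, both $\m_n$ and $\Pic_{X/S}[n]$ are finite \'etale over $k$. It was already noted that the covering restricts to a covering of $f^{\ms X}_\ast\B\PGL_n(c)^s$ by $\Tw^s_{\ms X/k}(n,\ms O,c/2n)^{\lf}$.

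With that structural fact, part (3) is essentially free. Lemma \ref{L:dejong} furnishes a stable locally free $\ms X$-twisted sheaf, and Corollary \ref{C:non-empty} then produces stable objects in $\Tw^s_{\ms X/k}(n,\ms O,\gamma)$ for all sufficiently large $\gamma$; composing with the covering yields non-emptiness of $\ms M^{\ms X}_n(c)^s$ for infinitely many $c$.

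For part (2), I would argue by descent. Irreducibility, normality, and the lci property are local in the smooth topology, so they descend from $\Tw^s_{\ms X/k}(n,\ms O,c/2n)$ where they hold by Theorem \ref{T:asymp}. The Deligne-Mumford and tameness properties follow from the analogous properties of the cover via Proposition \ref{P:stacky-means-stacky} (alternatively, via paragraph \ref{coarse}, which observes that stable objects have automorphism group $\m_n$). Properness would follow by combining the quasi-properness of $\ms M^{\ms X}_n(c)$ recorded in Section \ref{sec:compac-by-rig} with properness of $\Tw^s$ upstairs, using that a proper surjection from a proper stack to a quasi-proper target forces the target to be proper.

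For part (1), the preimage of $f^{\ms X}_\ast\B\PGL_n(c)^s$ under the covering is exactly $\Tw^s_{\ms X/k}(n,\ms O,c/2n)^{\lf}$. Schematic density descends along faithfully flat maps, so Theorem \ref{T:asymp}(1) immediately gives schematic density of $f^{\ms X}_\ast\B\PGL_n(c)^s$ in $\ms M^{\ms X}_n(c)^s$. The final assertion that $(f^{\ms X}_\ast\B\PGL_n)(c)^s$ is irreducible follows formally, since it is a schematically dense open substack of an irreducible stack. The main subtlety I anticipate is pinning down properness of $\ms M^{\ms X}_n(c)^s$ (as opposed to merely quasi-properness), since stability is an open but not in general closed condition; this presumably requires invoking the large-$c$ asymptotic that stable equals semistable on $\Tw^s$, which is already packaged inside Theorem \ref{T:asymp}.
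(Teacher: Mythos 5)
Your proposal is correct and follows essentially the same route as the paper, which proves Theorem \ref{T:main} in one line by combining the covering $\Tw^s_{\ms X/k}(n,\ms O,c/2n)\to\ms M^{\ms X}_n(c)^s$ described at the end of Section \ref{S:stability} with Lemma \ref{L:dejong}, Lemma \ref{L:induct} (your Corollary \ref{C:non-empty}), and Theorem \ref{T:asymp}. Your elaborations --- the choice $D=2nC$, descent of the local and topological properties along the finite \'etale gerbe-cover, and deducing properness from the proper surjection onto the quasi-proper target --- are exactly the details the paper leaves implicit.
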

\begin{proof}
  The proof follows immediately by combining the covering described at
  the end of Section \ref{sec:compac-by-rig} with Lemma \ref{L:dejong},
  Lemma \ref{L:induct}, and Theorem \ref{T:asymp}.
\end{proof}
Recall that ``lciq singularities'' are by definition
finite quotients of lci singularities.

\begin{cor}\label{C:lciq}
  For sufficiently large $c$, the coarse moduli space $(\ms M^{\ms
    X}_n(c)^s)^{\text{\rm mod}}$ is an irreducible proper normal 
  algebraic space with lciq singularities.
\end{cor}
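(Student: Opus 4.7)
The plan is to derive the corollary directly from Theorem \ref{T:main} by passing from the Deligne--Mumford stack $\ms N := \ms M^{\ms X}_n(c)^s$ to its coarse moduli space. First I would invoke the Keel--Mori theorem: $\ms N$ is a separated Deligne--Mumford stack of finite type over $k$, proper by Theorem \ref{T:main}, so it admits a coarse moduli space $\pi:\ms N\to N$ with $\pi$ proper and initial among maps to algebraic spaces. Properness of $N$ over $k$ is then automatic, and because $|\pi|$ is a homeomorphism of underlying topological spaces, the irreducibility of $\ms N$ provided by Theorem \ref{T:main} transfers to $N$.

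For the remaining two properties—normality and lciq singularities—I would use the local structure theorem for tame Deligne--Mumford stacks: around any geometric point of $N$ there is an \'etale neighborhood $V\to N$ such that $\ms N\times_N V\cong[U/G]$, where $U$ is an affine scheme, $G$ is a finite group whose order is invertible in $k$, and the induced map is $V\cong U/G$. The composition $U\to[U/G]\to\ms N$ is \'etale, so the properties of being normal and lci, which are \'etale-local on the source, descend from $\ms N$ to $U$. Normality of $V=U/G$ then follows from the classical fact that the ring of invariants of a normal ring under a finite group action is again normal, and hence $N$ itself is normal.

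For the lciq assertion, $V=U/G$ is by construction a finite group quotient of the lci scheme $U$, so $V$ has lciq singularities by the very definition recalled in the excerpt; since this holds on an \'etale cover of $N$ and the lciq property is \'etale-local, $N$ has lciq singularities as well. The only real subtlety in the argument is the availability of the local quotient presentation $[U/G]$ with $|G|$ invertible in $k$; this is the place where we genuinely need the tameness clause of Theorem \ref{T:main}, without which descent of the lci condition to a scheme-theoretic chart would not yield the lciq conclusion. Everything else is the formal compatibility of properness, irreducibility, and normality with the coarse moduli space construction.
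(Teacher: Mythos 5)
Your proof is correct, but it takes a genuinely different route from the paper's. You argue purely formally from the list of adjectives in Theorem \ref{T:main}: Keel--Mori supplies the coarse space $\pi:\ms N\to N$, irreducibility and properness pass along $\pi$, and the \'etale-local presentation $\ms N\times_N V\cong[U/G]$ with $V\cong U/G$ lets you pull normality and the lci property up to $U$ and then push them down to $V$ (invariants of a normal ring are normal; a finite quotient of an lci scheme is lciq by definition). The paper instead exploits the specific geometry of the situation: Lemma \ref{L:pushforwardcovering} produces a finite, $\Pic_{X/k}[n]$-invariant morphism $\mTw^s_{\ms X/k}(n,\ms O,c/2n)\to(\ms M^{\ms X}_n(c)^s)^{\textrm{mod}}$ from the coarse space of Paragraph \ref{coarse}, hence a finite birational map $\chi:\mTw^s_{\ms X/k}(n,\ms O,c/2n)/\Pic_{X/k}[n]\to(\ms M^{\ms X}_n(c)^s)^{\textrm{mod}}$; normality of the target plus Zariski's Main Theorem forces $\chi$ to be an isomorphism, and lciq then follows because $\Tw^s_{\ms X/k}(n,\ms O,c/2n)$ is lci. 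The trade-off is that your argument is more general --- it shows the coarse space of \emph{any} irreducible proper normal lci tame Deligne--Mumford stack is an irreducible proper normal lciq algebraic space --- whereas the paper's argument yields strictly more information in this case, namely the explicit global description of the coarse space as the quotient of the coarse moduli space of stable twisted sheaves by the finite group $\Pic_{X/k}[n]$, which is of independent interest. One small correction to your commentary: with the paper's definition of lciq (finite quotients of lci singularities, with no invertibility condition on the group order), tameness is not actually what makes the last step work --- the \'etale chart $U\to\ms N$ and the identification $V\cong U/G$ are available for any separated Deligne--Mumford stack with finite inertia --- so the emphasis you place on tameness there is harmless but misplaced.
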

\begin{proof}
  Lemma \ref{L:pushforwardcovering} gives rise to a finite morphism
  $\mTw^s_{\ms X/k}(n,\ms O,c/2n)\to(\ms M^{\ms
    X}_n(c)^s)^{\textrm{mod}}$ from the coarse space of Paragraph \ref{coarse}
  which is invariant for the natural action of $\Pic_{X/k}[n]$ on
  $\mTw^s_{\ms X/k}(n,\ms O,c/2n)$ and such that the natural map
  $\chi:\mTw^s_{\ms X/k}(n,\ms O,c/2n)/\Pic_{X/k}[n]\to(\ms M^{\ms
    X}_n(c)^s)^{\textrm{mod}}$ is birational.  Since the coarse space of
  a normal tame Deligne-Mumford stack is normal, it follows from
  Zariski's Main Theorem that $\chi$ is an isomorphism.  Since
  $\Tw^s_{\ms X/k}(n,\ms O,c/2n)$ is lci, it follows that $(\ms M^{\ms
    X}_n(c)^s)^{\textrm{mod}}$ is lciq.
\end{proof}

\subsection{Stackification is unnecessary on a surface}\label{S:p=s 
on surface}

Let $f:X\to S$ be a smooth projective relative surface.  We will prove
here that pre-generalized Azumaya algebras on $X$ as in Section \ref{S:gen
  azumaya construction} form a stack on $S$.

Given a pre-generalized Azumaya algebra $\ms A$ on $X$,
Lemma \ref{L:embedding lemma} produces a $\G_{m}$-gerbe $\ms X$, an $\ms
X$-twisted sheaf $\ms F$, and an isomorphism of \emph{generalized\/}
Azumaya algebras $\ms B:=\R\pi_{\ast}\rsend(\ms F)\simto\ms A$.  We
will show that in fact $\ms B$ and $\ms A$ are isomorphic as
\emph{pre-generalized Azumaya algebras\/}.  We will temporarily call
$\ms B$ the \emph{associated twisted derived endomorphism algebra\/}
(or TDEA for short).

\begin{prop}\label{P:genaz is TDEA} 
  Suppose $f:X\to S$ is a smooth (possibly non-proper) relative
  surface over an affine scheme.  Any pre-generalized Azumaya algebra
  $\ms A$ is isomorphic to the associated TDEA in $\PR$.  Furthermore,
  the isomorphisms of two such weak algebras form a sheaf on $S$.
\end{prop}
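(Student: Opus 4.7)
The plan is to establish that when $X/S$ is a smooth relative surface over an affine base, the canonical map $\PR \to \PR^s$ is already an equivalence on both objects and morphisms, so the isomorphism of generalized Azumaya algebras $\ms B := \R\pi_{\ast}\rsend(\ms F) \simto \ms A$ produced in Lemma~\ref{L:embedding lemma} comes from an honest isomorphism of weak algebras. This splits into two tasks: (i) show that $\PR$ is already a prestack, i.e.\ the hom-presheaf between any two pre-generalized Azumaya algebras is a sheaf (which yields the second assertion immediately); and (ii) show that this prestack satisfies effective descent along \'etale hypercoverings. Granting both, $\PR \to \PR^s$ is an equivalence, and the proposition follows. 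For (i), I would pass to an \'etale cover $U \to X$ on which both algebras have presentations $\rsend(\ms F)$ and $\rsend(\ms F')$ with $\ms F, \ms F'$ totally supported perfect sheaves, and apply the derived Skolem--Noether Theorem~\ref{T:the one}: over any $V \to U$ a weak-algebra isomorphism $\rsend(\ms F)|_V \simto \rsend(\ms F')|_V$ amounts, up to a $\ms O_V^{\times}$-ambiguity, to a sheaf isomorphism $\ms F|_V \otimes \ms L \simto \ms F'|_V$ for some line bundle $\ms L$ on $V$; since these are isomorphisms of honest coherent sheaves, the presheaf of such data is manifestly a sheaf.

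For (ii), start with an \'etale $3$-hypercovering $Y_\bullet \to X$ equipped with a gluing datum for pre-generalized Azumaya algebras, and choose on $Y_0$ a presentation $\rsend(\ms F_0) \simto \ms A_0$. The descent isomorphism over $Y_1$, combined with Theorem~\ref{T:the one}, translates into a sheaf isomorphism $\phi : p_0^{\ast}\ms F_0 \otimes \ms L \simto p_1^{\ast}\ms F_0$ on $Y_1$, unique up to a $\G_m$-scalar, whose coboundary on $Y_2$ is a $2$-cocycle $c$ with values in $\G_m$. The cocycle $c$ determines a $\G_m$-gerbe $\pi : \ms X \to X$, and the pair $(\ms F_0, \phi)$ descends to a coherent $\ms X$-twisted sheaf $\ms F$. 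Then $\ms B := \R\pi_{\ast}\rsend(\ms F)$ is a pre-generalized Azumaya algebra realizing the descent datum. Applied to the given $\ms A$ together with the hypercovering furnished by the gerbe of trivializations $\ms X(\ms A)$, this shows $\ms A$ is isomorphic to its associated TDEA already in $\PR$.

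The main obstacle is ensuring that these sheaf- and cocycle-level manipulations faithfully reflect the algebra structure in $\D(X)$, rather than merely reproducing the weaker generalized-Azumaya-algebra datum, since derived-category descent is notoriously prone to failure because of negative-degree $\ext$ obstructions. The surface hypothesis is precisely what suppresses these: for totally supported perfect sheaves on a smooth $2$-dimensional fiber, the local complex $\rshom(\ms F, \ms F')$ is concentrated in degrees $[0,2]$, so the potentially obstructing $\ext^{-i}$-sheaves for $i \geq 1$ vanish identically, and the affineness of $S$ kills the higher global obstructions coming from derived pushforwards. The entire descent problem therefore reduces to the classical descent theory for coherent twisted sheaves on $\ms X$, which is standard.
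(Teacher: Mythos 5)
Your high-level plan --- show that $\PR$ is a prestack and then observe that the isomorphism of Lemma \ref{L:embedding lemma} lifts from $\PR^{s}$ to $\PR$ --- is viable, and you correctly sense that the danger lies in negative-degree $\ext$ obstructions to descent in the derived category. But the execution has a genuine gap at exactly that point. The assertion that the Isom-presheaf is ``manifestly a sheaf'' because Skolem--Noether identifies local isomorphisms with sheaf isomorphisms up to scalar does not follow: Theorem \ref{T:the one} is proved only over a \emph{local} ring, so it classifies isomorphisms stalkwise (equivalently, it computes sections of the sheafified Isom, which is what Lemma \ref{L:isoms-check} records). Passing from that to the statement that the honest presheaf $V\mapsto \isom_{\D(V)}(\ms A|_V,\ms B|_V)$ is separated and satisfies gluing is precisely the problem to be solved, not a consequence of the stalkwise classification: a morphism of weak algebras that vanishes on an \'etale cover need not vanish, the obstructions living in $\cech^{p}(V,\sext^{-p}(\ms A,\ms B))$ for $p\geq 1$. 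Invoking Skolem--Noether ``over any $V\to U$'' is therefore circular.

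When you do address the $\ext$ obstructions, you compute the wrong complex. What must vanish are the negative sheaves $\sext^{-i}(\rsend(\ms F),\rsend(\ms F'))$, not $\sext^{-i}(\ms F,\ms F')$ (the latter are nonnegative for trivial reasons, and descent for the twisted sheaves themselves is classical and not the issue). Since $\rsend(\ms F)$ has cohomology $\send(\ms F)$ in degree $0$ and $\sext^{1}(\ms F,\ms F)$ in degree $1$, the complex $\rshom(\rsend(\ms F),\rsend(\ms F'))$ has a potentially nonzero $\sext^{-1}$, namely $\shom(\sext^{1}(\ms F,\ms F),\send(\ms F'))$. Its vanishing is not a degree count: it uses that $\sext^{1}(\ms F,\ms F)$ is supported in relative dimension $0$ while $\send(\ms F')$ has totally pure fibers, hence admits no nonzero map from a $0$-dimensional sheaf. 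This is where the purity built into relative (pre-)generalized Azumaya algebras enters, and it is the heart of the paper's argument, which runs the triangle $\ms H^0(\ms A)\to\ms A\to\ms H^1(\ms A)[-1]$ through the local-to-global $\ext$ spectral sequence and then cites the gluing theorem of \cite{BBD} or \cite{abramovich-polishchuk}. Relatedly, ``affineness of $S$ kills the higher global obstructions'' is not the operative mechanism --- the obstructions are killed by the vanishing of the coefficient sheaves, not by the base. Finally, your step (ii) is superfluous once (i) is in place: the TDEA is already a global object of $\PR$, and a $\PR^{s}$-isomorphism between two global objects of a prestack lifts automatically.
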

\begin{proof} 
  By standard arguments (e.g.\ Theorem 3.2.4 of \cite{BBD} or Theorem
  2.1.9 of \cite{abramovich-polishchuk}), it suffices to prove that
  $\ext^{-i}(\ms A,\ms A)=0$ for all $i>0$ (as long as we allow
  $f:X\to S$ to be arbirary with the stated hypotheses).  From the
  definition of pre-generalized Azumaya algebra, we know that $\ms A$
  has cohomology only in degrees $0$ and $1$, that $\ms H^0(\ms A)$
  has totally pure fibers over $S$, and that $\ms H^1(\ms A)$ has
  support with relative dimension $0$.  The natural triangle
$$\ms H^0(\ms A)\to\ms A\to\ms H^1(\ms A)[-1]\xto{+}$$
gives rise to an exact sequence
$$\ext^{-i}(\ms H^1(\ms A),\ms A[1])\to\ext^{-i}(\ms A,\ms A)\to\ext^{-i}(\ms
H^0(\ms A),\ms A).$$
The left-hand group fits into an exact sequence
$$\ext^{-i}(\ms H^1(\ms A),\ms H^0(\ms A))\to\ext^{-i}(\ms H^1(\ms
A),\ms A[1])\to\ext^{-i}(\ms H^1(\ms A),\ms H^1(\ms A))$$
and the right-hand group fits into an exact sequence
$$\ext^{-i}(\ms H^0(\ms A),\ms H^0(\ms A))\to\ext^{-i}(\ms H^0(\ms
A),\ms A)\to\ext^{-i-1}(\ms H^0(\ms A),\ms H^1(\ms A)).$$ (This is
simply an explicit description of a certain spectral sequence, which
is especially simple because $\ms A$ has so few cohomology sheaves.)
We wish to show that the ends of the last two sequences vanish, for
which it is enough to show (using the local-to-global $\ext$-spectral
sequence) that the $\ext$-sheaves $\sext^{\ast}(\ms H^{\ast}(\ms
A),\ms H^{\ast}(\ms A))$ vanish for appropriate indices.  But there
are no negative $\ext$-groups for modules over a ring.  This completes
the proof.
\end{proof}

\begin{remark}
  When $X/S$ is quasi-projective, one can also give an explicit proof
  of Proposition \ref{P:genaz is TDEA} (which does not rely on \cite{BBD}) using
  resolutions by sums of powers of $\ms O(1)$.
\end{remark}

\subsection{Deformation theory and the virtual fundamental class}
\label{S:dtvfc}

Let $k$ be an algebraically closed field and $X/k$ a smooth projective
surface over $k$.  Fix a $\m_n$-gerbe $\ms X\to X$ with $n$ invertible in $k$.

\subsubsection{Perfect obstruction theory for twisted sheaves}
\label{sec:perf-obstr-theory}

Let $\ms F$ be the universal $\ms X$-twisted sheaf on $\ms
X\times\Tw^s_{\ms X/k}(n,\ms O)$.  Write $p$ (resp.\ $q$) for the
projection of $\ms X\times\Tw^s_{\ms X/k}(n,\ms O)$ to $\ms X$ (resp.\
$\Tw^s_{\ms X/k}(n,\ms O)$). Recall that there is a natural
isomorphism $$\LL_{\ms X\times\Tw^s_{\ms X/k}(n,\ms O)}\cong\L
p^{\ast}\LL_{\ms X}\oplus\L q^{\ast}\LL_{\Tw^s_{\ms X/k}(n,\ms O)},$$
and that there is an isomorphism of functors (coming from Grothendieck
duality for $q$) $$\L q^{\ast}\cong\L q^{!}\ltensor\L
p^{\ast}\omega_X^{\vee}[-2].$$ The Atiyah class $$\ms F\to\LL_{\ms
  X\times\Tw^s_{\ms X/k}(n,\ms O)}\ltensor\ms F[1]$$ yields by
projection a map $$\ms F\to\L q^{\ast}\LL_{\Tw^s_{\ms X/k}(n,\ms
  O)}[1]\ltensor\ms F.$$ Since $\ms F$ is perfect, this is equivalent
(by the cher \`a Cartan isomorphism) to a map $$\rsend(\ms F)\to\L
q^{\ast}\LL_{\Tw^s_{\ms X/k}(n,\ms O)}[1],$$ yielding a
map $$\rsend(\ms F)\to\L q^{!}\LL_{\Tw^s_{\ms X/k}(n,\ms
  O)}[-1]\ltensor\L p^{\ast}\omega_X^{\vee}.$$ Applying Grothendieck
duality yields a morphism $$\mf b:\R q_{\ast}\rshom(\ms F,\L
p^{\ast}\omega_X\ltensor\ms F)\to\LL_{\Tw^s_{\ms X/k}(n,\ms O)}[-1]$$ and
restriction to the traceless part finally yields a morphism
$$\mf b_0:\R q_{\ast}\rshom(\ms F,\L
p^{\ast}\omega_X\ltensor\ms F)_0\to\LL_{\Tw^s_{\ms X/k}(n,\ms O)}[-1]$$

\begin{prop}\label{P:perf-obs-tw-sh}
  The shifted map $\mf b_0[1]$ gives a perfect obstruction theory for $\Tw^s_{\ms
    X/k}(n,\ms O)$.
\end{prop}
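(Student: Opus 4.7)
The plan is to follow the Atiyah-class construction of obstruction theories for moduli of sheaves, as carried out by Illusie and refined by Huybrechts--Thomas, adapted to the twisted, fixed-determinant setting. The key inputs are that $\ms F$ is perfect on $\ms X\times\Tw^s_{\ms X/k}(n,\ms O)$; that $f$ has relative dimension $2$, so that Grothendieck--Serre duality yields a complex with two-term amplitude; that stability forces $\End(\ms F_t)=k$ at every geometric point $t$; and that trivializing the determinant cuts out the traceless summand of $\rsend(\ms F)$.

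The first step is to check perfection and compute amplitude. Perfection of $\R q_{\ast}\rshom(\ms F,\L p^{\ast}\omega_X\ltensor\ms F)$ follows from perfection of $\ms F$ together with properness and finite Tor-dimension of $q$. For amplitude, flat base change and Serre duality on the Cohen--Macaulay $\m_n$-gerbe $\ms X$ give at each geometric point $t\to\Tw^s_{\ms X/k}(n,\ms O)$ a canonical identification
\[
h^i\bigl(\R q_{\ast}\rshom(\ms F,\L p^{\ast}\omega_X\ltensor\ms F)[1]\bigr)_{t}\cong\ext^{1-i}(\ms F_t,\ms F_t)^{\vee}.
\]
The contribution at $i=1$ is $\End(\ms F_t)^{\vee}=k^{\vee}$ by stability; its traceless quotient vanishes, so after passing to the traceless part the complex sits in amplitude $[-1,0]$.

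The second step is to verify the two defining cohomological conditions of a perfect obstruction theory, namely that $h^0(\mf b_0[1])$ is an isomorphism and $h^{-1}(\mf b_0[1])$ is surjective. Under the identification above, these reduce to the classical facts that the tangent and obstruction spaces for deformations of $\ms F_t$ as an $\ms X$-twisted sheaf with trivialized determinant are $\ext^1_0(\ms F_t,\ms F_t)$ and a subspace of $\ext^2_0(\ms F_t,\ms F_t)$ respectively, together with the assertion that $\mf b_0[1]$ realizes precisely these identifications via the Atiyah class of the universal family. The main obstacle is making the fixed-determinant reduction precise universally: one must check that tracing the Atiyah class of $\ms F$ produces the pullback of the Atiyah class of $\det\ms F$, so that the trivialization $\det\ms F\simeq\ms O$ annihilates the trace part not just pointwise but as a universal class on $\ms X\times\Tw^s_{\ms X/k}(n,\ms O)$. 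Once this compatibility is in place, Illusie's machinery for deforming perfect complexes under square-zero thickenings carries over without modification to the twisted setting (since $\ms X$-twisted sheaves form a full subcategory of $\D(\ms X)$ closed under the needed derived operations), and the obstruction-theoretic conditions follow from the classical Mukai--Illusie calculation for the trace-free part.
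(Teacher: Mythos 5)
Your proposal is correct and follows essentially the same route as the paper's proof: perfection and the amplitude computation via base change, Serre duality, and the vanishing of traceless endomorphisms of a stable (hence simple) sheaf; then the identification of $h^0$ with the dual Kodaira--Spencer map and of $h^{-1}$-surjectivity with Illusie's Atiyah-class description of the obstruction to deforming a perfect complex under a square-zero extension, with the splitting trace (using $n$ invertible) handling the reduction to the traceless part. The only difference is one of emphasis: the paper spells out the $h^{-1}$-surjectivity by explicitly comparing the obstruction class from Illusie IV.3.1.8 with the obstruction to extending the classifying map $\spec B_0\to\Tw^s_{\ms X/k}(n,\ms O)$, where you defer to the standard machinery.
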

\begin{proof}
  There are three things to check: that the complex $\R
  q_{\ast}\rshom(\ms F,\L p^{\ast}\omega_X\ltensor\ms F)_0$ is perfect
  of amplitude $[0,1]$, that $\mf b_0$ induces an isomorphism on
  $\H^1$ sheaves, and that $\mf b_0$ induces a surjection on $\H^{0}$
  sheaves.  The first assertion follows from the compatibility of the
  formation of the complex with base change on $\Tw^s_{\ms X/k}(n,\ms
  O)$ (which shows that it is perfect, as its fibers are bounded
  complexes on regular schemes) and the fact that the fibers of $\R^2
  q_{\ast}\rshom(\ms F,\L p^{\ast}\omega_X\ltensor\ms F)_0$ over
  geometric points of $\Tw^s_{\ms X/k}(n,\ms O)$ compute (by Serre
  duality) the traceless endomorphisms of stable sheaves, which must
  be trivial (so that the amplitude is as claimed).  The other two
  assertions will follow from Illusie's theory.  We already know
  (thanks to Illusie) that deformations and obstructions are governed
  by Atiyah classes; we will describe how this allows us to show that
  $\mf b_0$ gives a perfect obstruction theory.

  Since both the domain and codomain of $\mf b_0$ have no cohomology
  above degree $1$, to show that $\mf b_0$ induces an isomorphism on
  $\H^1$ sheaves, it suffices to show this after base change to
  geometric points of $\Tw^s_{\ms X/k}(n,\ms O)$.  Given a geometric
  point $x\to\Tw^s_{\ms X/k}(n,\ms O)$ corresponding to a twisted
  sheaf on $\ms X\tensor\kappa(x)$, we know that $\hom(\LL_{\Tw^s_{\ms
      X/k}(n,\ms O)},\kappa(x))$ is naturally identified with the
  space $T$ of first order determinant-preserving deformations of $F$
  over $\kappa(x)[\eps]$.  Moreover, by the functoriality of our
  construction with respect to base change on $\Tw^s_{\ms X/k}(n,\ms
  O)$, the map $\mf b_0$ is the Serre dual of the Kodaira-Spencer map
  $T\to\ext^1(F,F)_0$ (see e.g.\ Example 10.1.9 of \cite{h-l}).  This is
  well-known to give an isomorphism (e.g., Section 10.2 of \cite{h-l}).

  To show that $\H^0(\mf b_0)$ is surjective, we may proceed as
  follows.  Recall (IV.3.1.8 of \cite{illusie}) that if $A\to A_0$ is
  a small extension of $B$-algebras with kernel $I$ in a topos and $M$
  is an $A_0$-module, then one can find the obstruction to deforming
  $M$ to an $A$-module as the composition
$$M\to\LL_{A_0/B}\ltensor M[1]\to I\ltensor M[2]\to I\tensor M[2],$$
where the first map is the Atiyah class of $M$ with respect to
$A_0/B$, the second map comes from the morphism $\LL_{A_0/B}\to I$
parametrizing the class of the extension $A\to A_0$, and the third map
is the natural augmentation onto the $0$th cohomology module.  To
apply this to our
case, consider a situation 
$$\spec B\leftarrow \spec B_0\to\Tw^s_{\ms X/k}(n,\ms O)$$ with
$B\to B_0$ a small extension of strictly Henselian local rings and
kernel annihilated by the maximal ideal of $B$.  We let $A$ be the
structure sheaf of $\ms X\times\spec B$ and $A_0$ that of $\ms
X\times\spec B_0=\ms X\times\spec B\times_{\spec B}\spec B_0$.  Thus,
we have that $$\LL_{A_0/B}=\L p^{\ast}\LL_{\ms X}\oplus\L
q^{\ast}\LL_{B_0/B}.$$ Moreover, it is clear that the morphism
$\LL_{A_0/B}\to \ms O_{\ms X}\tensor I[1]$ parametrizing the extension
$A\to A_0$ is given by the map $$\LL_{A_0/B}\to\L
q^{\ast}\LL_{B_0/B}\to\L q^{\ast}I[1].$$ By functoriality, composing
the Atiyah class with the natural map $$\LL_{\Tw^s_{\ms X/k}(n,\ms
  O)}|_{B_0}\to \LL_{B_0/B}$$ gives rise to the map
$$\ms F\to\L q^{\ast}\LL_{B_0/B}\ltensor\ms F[1]$$
associated to the projection of the Atiyah class.  Thus, we find that
the obstruction to deforming $\ms F$ over $B$ is the element
corresponding by Serre duality to the composition
$$\R q_{\ast}\rshom(\ms F,\L q^{\ast}\omega\ltensor\ms
F)|_{B_0}\to\LL_{\Tw_{\ms X/k}(n,\ms
  O)}[-1]|_{B_0}\to\LL_{B_0/B}[-1]\to\kappa.$$ On the other hand, the
last two arrows give precisely the obstruction to extending the map
$\spec B_0\to\Tw_{\ms X/k}(n,\ms O)$ to a map $\spec B\to\Tw_{\ms
  X/k}(n,\ms O)$. Thus, the entire composition is trivial if and only
if the composition of the last two maps is trivial.  Since any map
$\LL_{\Tw_{\ms X/k}(n,\ms O)}\to\kappa[1]$ factors through some
deformation situation $B\to B_0$, this shows that $\H^0(\mf b)$ is
surjective.  Using the fact that $n$ is invertible in $k$, and thus
the existence of a splitting trace map, it is easy to see that the
canonical obstruction given here actually lies in the traceless part
of $\ext^2(\ms F,\ms F)$; this shows that in fact $\H^0(\mf b_0)$ is
surjective, as desired.
\end{proof}

The deformation theory described here has a concrete form: given a
generalized Azumaya algebra $\rsend(\ms F)$ on $X$, the first-order
infinitesimal deformations form a pseudo-torsor under the hypercohomology
$\HH^1(X,\rsend(\ms F)_0)$, while there is naturally a class in
$\HH^2(X,\rsend(\ms F)_0)$ giving the obstruction to deforming $\ms
F$.  When $\ms F$ is locally free, so that $\rsend(\ms F)\cong A$ is
an Azumaya algebra, we recover the well-known fact that $\H^1(X,A_0)$
parametrizes deformations of $A$, while $\H^2(X,A_0)$ receives
obstructions.  If $A\cong\shom(\ms V)$ is the sheaf of endomorphisms
of a locally free sheaf on $X$ with trivial(ized) determinant, our
general machine simply says that the deformation and obstruction
theory of the algebra $A$ is the same as the deformation and
obstruction theory of $\ms V$ as a locally free sheaf with trivialized
determinant. (This similarly describes the deformation theory of a
twisted sheaf with trivialized determinant.)

\subsubsection{The virtual fundamental class of $\GAz_{\ms X/k}(n)^s$}
\label{sec:virt-fund-class}

By its construction, $\GAz_{\ms X/k}(n)^s$ is a subfibered category of
the fibered category of weak algebras on $X_{\retale}$.  As such, there is a
universal generalized Azumaya algebra $\ms A$ on $X\times\GAz_{\ms
  X/k}(n)^s$ whose fibers over the moduli space have cohomology class $[\ms
X]$.  If $\pi:\ms Y\to X\times\GAz_{\ms X/k}(n)^s$ is the gerbe of
trivialized trivializations of $\ms A$, then we have that $\ms
A\cong\R\pi_{\ast}\rsend(\ms F)$ for some $\ms Y$-twisted sheaf $\ms
F$.  Moreover, the covering $\Tw^s_{\ms X/k}(n,\ms O)\to\GAz_{\ms
  X/k}(n)^s$ gives rise to an isomorphism $$\rho:\ms
Y\times_{X\times\GAz_{\ms X/k}(n)^s}X\times\Tw^s_{\ms X/k}(n,\ms
O)\simto\ms X\times\Tw^s_{\ms X/k}$$ and an isomorphism $\ms
G\to\rho^{\ast}\ms F$, where $\ms G$ is the universal twisted sheaf on
$\ms X\times\Tw^s_{\ms X/k}(n,\ms O)$.  There results a natural isomorphism
of weak algebras $\rsend(\ms F)\simto\L\rho^{\ast}\ms A$.

Letting $\ms A_0\subset\ms A$ be the traceless part, there is an
induced isomorphism $\rsend(\ms F)_0\simto\L\rho^{\ast}\ms
A_0$. Applying Proposition \ref{P:virt-fund} and Proposition
\ref{P:perf-obs-tw-sh}, we conclude that there is a perfect
obstruction theory $\ms A_0\to\LL_{\GAz_{\ms X/k}(n)^s}$, giving rise to
a virtual fundamental class on $\GAz_{\ms X/k}(n)^s$.

\subsection{A potential application: numerical invariants of division algebras over function fields}
\label{sec:potent-appl-new}

Suppose that the cohomology class $\alpha$ of $\ms X$ in $\H^2(X,\G_m)$ has
order $n$.  If $\ms A$ is an Azumaya algebra of degree $n$ with
cohomology class $\alpha$, then the generic fiber of $\ms A$ must be a
finite dimensional central division algebra $D$ over the function field
$k(X)$.  In this case, we have an especially nice description of the
stable locus.

\begin{lem}
  When $[\ms X]$ has order $n$ in $\H^2(X,\G_m)$, any $\PGL_n$-torsor
  $T$ with class $\alpha$ is stable.
\end{lem}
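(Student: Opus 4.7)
The plan is to show that, under the hypothesis $\mathrm{ord}(\alpha)=n$, the stability condition of Definition \ref{D:stab} is vacuous: the Azumaya algebra $\ms A$ attached to $T$ will have no non-zero right ideal of rank strictly smaller than $\rk\ms A = n^2$.

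First I would pass to the generic point to identify the generic fiber $D := \ms A\otimes_{\ms O_X}k(X)$. Because $X$ is smooth over the algebraically closed field $k$, the restriction map $\Br(X)\to\Br(k(X))$ is injective (Auslander--Goldman), so the image of $\alpha$ in $\Br(k(X))$ still has order exactly $n$. Thus $D$ is a central simple $k(X)$-algebra of degree $n$ whose period equals $n$. Using the elementary divisibility chain $\mathrm{per}\mid\mathrm{ind}\mid\deg$ for central simple algebras over a field, one gets $\mathrm{ind}(D)=n=\deg(D)$, so $D$ is a division algebra over $k(X)$.

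Next I would use this to rule out non-trivial right ideals. Let $\ms I\subset\ms A$ be a non-zero right ideal. Since $\ms A$ is locally free and $\ms I$ is a subsheaf, $\ms I$ is torsion-free, so $\ms I\otimes_{\ms O_X}k(X)$ is a non-zero right ideal of $D$. A division algebra has no right ideals other than $0$ and itself, hence $\ms I\otimes k(X) = D$, which forces $\rk\ms I = n^2 = \rk\ms A$. Consequently the set of right ideals over which the inequality in Definition \ref{D:stab} must be checked is empty, and $\ms A$ is stable.

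There is no real obstacle here beyond identifying the generic fiber as a division algebra; the only non-formal ingredient is the injectivity $\Br(X)\hookrightarrow\Br(k(X))$ for smooth $X$, which is classical. Everything else is a one-line application of the fact that a division algebra has no proper one-sided ideals.
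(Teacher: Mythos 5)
Your proof is correct and follows essentially the same route as the paper: the paper's one-line argument is precisely that the generic fiber of $\ms A$ is a division algebra (since the class has order $n$ and hence period equals degree), so every non-zero right ideal has full rank $n^2$ and the stability condition is vacuous. You have merely spelled out the ingredients (injectivity of $\Br(X)\to\Br(k(X))$ and $\per\mid\ind\mid\deg$) that the paper leaves implicit in the paragraph preceding the lemma.
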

\begin{proof}
  Indeed, if $\ms A$ is the Azumaya algebra associated to $T$ then any
  non-zero right ideal must have rank $n^2$, since the generic fiber
  of $\ms A$ is a division algebra.
\end{proof}

Thus, $\GAz_{\ms X/k}(n)^s$ is a proper Deligne-Mumford stack which
carries a virtual fundamental class, as described in Section
\ref{sec:virt-fund-class}.  The following question was asked by de Jong.

\begin{ques}
  Does the virtual class $[\GAz_{\ms X/k}(n)^s]^{\textrm{vir}}$ lead
  to any new numerical invariants attached to $D$?
\end{ques}

Via Proposition \ref{P:virt-fund}(ff), any invariants coming
from $[\GAz_{\ms X/k}(n)^s]^{\textrm{vir}}$ will be closely related to
similar numbers attached to $[\Tw^s_{\ms X/k}(n,\ms
O)^s]^{\textrm{vir}}$.  One might expect the latter invariants to be
related to Donaldson invariants.

One interesting direct comparison might arise as follows: suppose given a
family of surfaces $\mc X\to S$ and a class $\alpha\in\H^2(\mc
X,\m_n)$ with $n$ invertible on the base, such that there are two
geometric points $0,1\to S$ such that $\alpha|_{\mc X_0}$ has order
$n$ in $\H^2(\mc X_0,\G_m)$ and $\alpha|_{\mc X_1}$ vanishes in
$\H^2(\mc X_1,\G_m)$.  (This happens whenever there is jumping in the
rank of the N\'eron-Severi group in the family.) 
Assuming one could prove deformation invariance
of whatever invariants one eventually defines, one would then be able
to give a direct comparision between the division-algebra invariants
attached to $\mc X_0$ and the classical invariants attached to $\mc X_1$.

\end{document}